\def\Z{{\Bbb Z}}
\newtheorem{theorem}{Theorem}[section]
\newtheorem{proposition}[theorem]{Proposition}
\newtheorem{lemma}[theorem]{Lemma}
\theoremstyle{definition}
\newtheorem{example}[theorem]{Example}
\newtheorem{conjecture}[theorem]{Conjecture}
\theoremstyle{remark}
\newtheorem{remark}[theorem]{Remark}
\begin{document}
\title{Graphs in the 3-sphere with maximum symmetry}
\author{Chao Wang, Shicheng Wang, Yimu Zhang and Bruno Zimmermann}

\date{}
\maketitle

\begin{abstract}
We consider the orientation-preserving actions of finite groups $G$ on pairs $(S^3, \Gamma)$,
where $\Gamma$ is a connected graph of genus $g>1$, embedded
in $S^3$. For each $g$ we give the maximum order $m_g$ of such $G$ acting on
$(S^3, \Gamma)$ for all such $\Gamma\subset S^3$.
Indeed we will classify all graphs $\Gamma\subset S^3$ which realize these $m_g$ in different levels: as abstract graphs and as spatial
graphs, as well as their group actions.

 Such maximum orders without the condition ``orientation-preserving'' are also addressed.
\end{abstract}

\begin{tabular}{@{}l@{ }p{10.1cm}} {\bf Keywords }
symmetry of graph, symmetry of 3-sphere, extendable action
\end{tabular}

{\bf MSC}
57M60, 57M15, 05E18

\tableofcontents

\section{Introduction}
We will study smooth, faithful actions of finite groups $G$ on pairs
$(S^3, \Gamma)$, where $\Gamma$ denotes a finite connected graph
with an embedding $e: \Gamma\to S^3$. We also say that such a $G$-action
on $\Gamma$ is \textit{extendable} (w.r.t. $e$). Let $g(\Gamma)$ denote the
genus of $\Gamma$, defined as the rank of its
fundamental group (which is a free group).
We will always assume that $g>1$ in the present paper.

Except in Section 5 we will only consider orientation-preserving
finite group actions on $S^3$. Referring to the recent
geometrization of finite group actions on $S^3$ (see \cite{Pe}), we will consider
only orthogonal actions of finite groups on $S^3$, i.e., finite
subgroups $G$ of the orthogonal group $SO(4)$.

Let $m_g$ denote the maximum order of such a group $G$ acting on a
pair $(S^3, \Gamma)$, for all embeddings of finite graphs $\Gamma$
of a fixed  genus $g$ into $S^3$. In this paper we will determine
$m_g$ and classify all finite graphs $\Gamma$ which realize the
maximum order $m_g$.

A similar question for the pair $(S^3, \Sigma_g)$, where $\Sigma_g$
is the closed orientable surface of genus $g$, was studied in
\cite{WWZZ}. The corresponding maximum order $OE_g$ of finite
groups acting on $(S^3, \Sigma_g)$ for all possible embeddings
$\Sigma_g\subset S^3$ was obtained in that paper.

Let $V_g$ denote the handlebody of genus $g$. Each graph
$\Gamma\subset S^3$ of genus $g$ has a regular neighborhood which is homeomorphic to $V_g$.
Call $\Gamma$ {\it unknotted} if the complement of its regular
neighborhood is also a handlebody, and otherwise {\it knotted}.

Our first result is from a
simple but significant observation, which claims $m_g=OE_g$,  and can be considered as a version of [WWZZ, Theorem 1.1].

\begin{theorem}\label{mg}
For each  $g>1$, $m_g=OE_g$, therefore  $m_g$ are given in the
following table.
\begin{center}
\begin{tabular}{|c|c|}
\hline $m_g$ & $g$\\\hline $12(g-1)$ & $2, 3, 4, 5, 6, 9, 11, 17
,25, 97, 121, 241, 601$\\\hline $8(g-1)$ & $7, 49, 73$\\\hline
$20(g-1)/3$ & $16, 19, 361$\\\hline $6(g-1)$ & $21, 481$\\\hline
$24(g-1)/5$ & $41$\\\hline $30(g-1)/7$ & $29, 841, 1681$\\\hline
$4(\sqrt{g}+1)^2$ & $g=k^2, k\neq3, 5, 7, 11, 19, 41$\\\hline
$4(g+1)$ & remaining numbers\\\hline
\end{tabular}
\end{center}
Moreover $m_g$ is realized by unknotted graphs for all $g$ except
$g=21, 481$.
\end{theorem}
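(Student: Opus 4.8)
The plan is to establish $m_g=OE_g$ by proving the two inequalities $m_g\le OE_g$ and $m_g\ge OE_g$ separately, after which the table is simply the known table for $OE_g$ from \cite{WWZZ}; the final clause will then be read off by keeping track, in the constructions, of which side of the relevant genus-$g$ surface is a handlebody preserved by the group.

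For $m_g\le OE_g$: given a $G$-action on $(S^3,\Gamma)$ with $g(\Gamma)=g$, take a $G$-invariant regular neighbourhood $N(\Gamma)$ (average a metric, or use equivariant general position). It is a handlebody $V_g$ of genus $g$, since $\pi_1(V_g)\cong\pi_1(\Gamma)$ is free of rank $g$. Then $G$ preserves $\partial V_g=\Sigma_g$, so $G$ acts faithfully on $(S^3,\Sigma_g)$ and $|G|\le OE_g$; taking the supremum over $\Gamma$ gives $m_g\le OE_g$. For the reverse, start with a $G$-action on some $(S^3,\Sigma_g)$ with $|G|=OE_g$. The surface $\Sigma_g$ separates $S^3$ into compact pieces $W_1,W_2$; invoking \cite[Theorem 2.2]{WWZZ} (more precisely, the classification of the extremal configurations in \cite{WWZZ}) one may take $\Sigma_g$ so that one side, say $W_1$, is a handlebody preserved by $G$. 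A finite orientation-preserving action on a handlebody admits an invariant spine — a $G$-invariant genus-$g$ graph $\Gamma\subset\mathrm{int}\,W_1$ with $W_1=N(\Gamma)$ — since such actions are standard up to conjugacy. Then $G$ acts faithfully on $(S^3,\Gamma)$ with $g(\Gamma)=g$, so $m_g\ge|G|=OE_g$; hence $m_g=OE_g$ with the stated table.

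For the last statement, recall that a genus-$g$ graph $\Gamma\subset S^3$ is unknotted exactly when $\partial N(\Gamma)$ is a Heegaard surface, i.e. when both complementary regions of $\partial N(\Gamma)$ are handlebodies. So if, in the construction above, $\Sigma_g$ can be taken unknotted with $G$ preserving \emph{both} sides, then the spine $\Gamma$ is unknotted. Running through the cases of \cite{WWZZ}, this holds for all $g$ except $g=21,481$, where $m_g=6(g-1)$ is attained only by an action that interchanges the two handlebodies of the genus-$g$ Heegaard splitting. For these two genera one argues separately: (i) an explicit knotted genus-$g$ graph carrying an order-$6(g-1)$ action is constructed, so $m_g=6(g-1)$ is still realized; and (ii) no unknotted graph can realize it, since a $G$-action on $(S^3,\Gamma)$ with $\Gamma$ unknotted preserves $N(\Gamma)$ and hence its handlebody complement as well, making $G$ a side-preserving action on a genus-$g$ Heegaard surface, whose order is strictly below $6(g-1)$ when $g=21,481$.

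The content is concentrated in two places. First, the dictionary between group actions on a handlebody and on a spine of it: one direction is the invariant regular neighbourhood above, the other needs the standardness of finite group actions on handlebodies, and one must also check that the extremal $OE_g$-configurations of \cite{WWZZ} genuinely appear as the boundary of a $G$-invariant handlebody — this is the sense in which the result is a graph analogue of \cite[Theorem 2.2]{WWZZ}. Second, and I expect this to be the main obstacle, the two exceptional genera $21$ and $481$: there one must simultaneously produce an explicit knotted realization of order $6(g-1)$ and rule out any unknotted realization, which requires the finer classification in \cite{WWZZ} of which extremal actions preserve and which interchange the two handlebodies.
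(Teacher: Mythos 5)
Your proposal follows essentially the same route as the paper: the inequality $m_g\le OE_g$ via a $G$-invariant (smoothed) $\epsilon$-neighbourhood whose boundary is an invariant $\Sigma_g$, the inequality $m_g\ge OE_g$ via an invariant spine of the invariant handlebody bounded by an extremal $\Sigma_g$ (the paper obtains this spine concretely as $p^{-1}(a)$ for the allowable singular edge/dashed arc $a$ with $V_g/G=N(a)$, using observation (iv) from \cite{WWZZ}, rather than by appealing to general standardness of finite handlebody actions), and the final clause by noting that the graph--surface correspondence preserves knottedness so that [WWZZ, Theorem 2.2] transfers verbatim.

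One correction: your description of the exceptional genera $g=21,481$ is inaccurate. In the orientation-preserving setting the group never interchanges the two sides of $\Sigma_g$; the obstruction there is that every extremal surface realizing $OE_g=6(g-1)$ is \emph{knotted}, i.e.\ the complement of the invariant handlebody is not a handlebody at all (these are exactly the dashed arcs carrying the foot notation $k$ in Theorem \ref{classify}). Your step (ii) still reaches the right conclusion because it ultimately cites the unknotted-case maximum from \cite{WWZZ}, but the mechanism you offer for why an unknotted realization fails is not the correct one.
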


%Actually $m_g=OE_g$, and we can (almost) classify extendable actions
%if the order is bigger than $4(g-1)$. These results are depend on
%and very similar to the results of surface case, see \cite{WWZZ}
%\bigskip
\medskip

The difference between graph case and surface case is that, given a
fixed $g$, there is a unique closed orientable surface of genus $g$,
but there are infinitely many graphs of genus $g$. We will consider
only {\it minimal} graphs, i.e, the graphs without free edges and without
extra vertices: an edge is called {\it free} if one of its vertices has
degree one; a vertex is {\it extra} if it has degree two and its
stable subgroup (stablizer in $G$) equals the stable subgroup of each adjacent
edge. Clearly there are only finitely many minimal graphs of genus
$g$ for each $g>1$. Note that this is really no restriction since we
can delete all free edges and extra vertices (or, vice versa, add
arbitrarily many free edges  and extra vertices in a $G$-equivariant
way) without changing the genus, and since the group actions we
considered are in $SO(4)$ (see Proposition \ref{SO(4)}).

The major part of the paper is to classify all minimal graphs
$\Gamma\subset S^3$ which realize these maximum orders $m_g$. To be precise and brief, we need some definitions.
%(i) abstract graphs, (ii) $G$-equivalent abstract graphs,  (iii) special graphs, (iv) $G$-equivalent special graphs.
%More precisely,
Suppose $G$ acts on $(S^3, \Gamma)$ for a minimal graph $\Gamma$
embedded in $S^3$ such that $|G|=m_g$. We call $\Gamma$ an
\textit{(abstract) MS graph} (of genus $g$), $(S^3, \Gamma)$ a \textit{spatial (MS)
graph} for $\Gamma$, and we call the $G$-action on $(S^3, \Gamma)$
resp. its restriction to $\Gamma$ an \textit{MS action} for $(S^3, \Gamma)$ resp. $\Gamma$, where MS represents ``maximum symmetry".
In the above definitions, we may ignore the words ``abstract", ``of genus $g$" and  ``MS" in the parentheses
when there is no confusion.

We say that two MS actions on abstract MS graphs are {\it equivalent} if they are conjugated by an isomorphism between the two graphs. We say that two spatial MS graphs are {\it equivalent} if there is a diffeomorphism of $S^3$ sending one to the other, and we say that two MS actions on spatial MS graphs are {\it equivalent} if they are conjugated by a diffeomorphism of $S^3$. Note that these diffeomorphisms can be orientation-reversing.

It is natural to ask: For given $g$, (i) what are the MS graphs?
(ii) For a fixed MS graph $\Gamma$, what are the MS actions on
$\Gamma$? (iii) What are the spatial MS graphs $(S^3, \Gamma)$ for
$\Gamma$? (iv) For a fixed spatial MS graph $(S^3, \Gamma)$,
what are the MS actions for $(S^3, \Gamma)$? We have the following
theorem.

%If in addition a minimal graph $\Gamma$ of genus $g$ realizes the maximum order $m_g$, that is $G$ of order $m_g$ acting on $(S^3, %\Gamma)$ for some embedding $\Gamma\subset S^3$,  then call $\Gamma$
%{\it abstract minimal MS graph}, or simply a {\it minimal MS graph}, and call $\Gamma\subset S^3$ a {\it spatial minimal MS-graph},

\begin{theorem}\label{MS-graph}

(1) For each $g$, the number of abstract MS graphs of genus $g$ is four
for $g=11, 241$; is two for $g=3,5,7,17, 19, 29, 41, 97, 601, 841,
1681$; and is one for all the remaining $g$.

(2) For each abstract MS graph $\Gamma$ of genus $g$, the number of spatial
MS graphs for $\Gamma$ is infinite for $\Gamma$ of genus $21,
481$; is two for $\Gamma$ of genus $9, 121, 361$ and for one $\Gamma$ of
genus $11$; and is one for all the remaining MS graphs.

%(3) For each MS graph $\Gamma$ (resp. spatial MS graph $(S^3,
%\Gamma)$) of genus $g$, the number of MS actions for $\Gamma$ is two
%for one $\Gamma$ of genus $29$; and is one for all the remaining MS
%graphs (resp. spatial MS graphs).

(3)  Each spatial MS graph has a unique MS action. This is also true for each (abstract) MS graph except for one graph of genus 29.

%(4) For each special MS graph $\Gamma\subset S^3$ of genus $g$,
%there is a unique  MS action for $(S^3, \Gamma)$ if $g\ne 29$, and  two MS actions if $g=29$.
\end{theorem}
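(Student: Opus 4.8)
The plan is to reduce the whole classification of MS graphs, spatial MS graphs and MS actions to the already completed classification of maximum-symmetry actions on the surface pair $(S^3,\Sigma_g)$ in \cite{WWZZ}. If $G$ realizes an MS action on $(S^3,\Gamma)$, then, $G$ being orthogonal (Proposition \ref{SO(4)}), we may take a $G$-invariant regular neighborhood $V_g$ of $\Gamma$; thus $\Gamma$ is a $G$-invariant spine of the handlebody $V_g$, and simultaneously $G$ acts on $(S^3,\partial V_g)=(S^3,\Sigma_g)$ with $|G|=m_g=OE_g$ by Theorem \ref{mg}, i.e.\ an MS action on the surface pair. Conversely, every MS action on $(S^3,\Sigma_g)$ from the \cite{WWZZ} list, together with a choice of minimal $G$-invariant spine of the handlebody bounded by $\Sigma_g$ on the side containing $\Gamma$ --- either side when $\Gamma$ is unknotted, and the (unique) handlebody side when $g=21,481$ by Theorem \ref{mg} --- produces an MS action on some $(S^3,\Gamma)$. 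So the four questions (i)--(iv) amount to: enumerate the minimal $G$-invariant spines of each $G$-handlebody $V_g$, up to abstract graph isomorphism (for (i)) and up to isotopy in $S^3$ (for (iii)), together with the relevant automorphism groups (for (ii), (iv)).

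The technical heart is to descend to the quotient $\mathcal{O}=V_g/G$, a handlebody orbifold whose underlying manifold with boundary and whose labelled singular graph can be extracted case by case from the \cite{WWZZ} data for each value of $m_g$. A $G$-invariant spine of $V_g$ projects to a spine of $\mathcal{O}$ and conversely lifts; the minimality requirements on $\Gamma$ (no free edge, no extra vertex) translate into a normal form for spines of $\mathcal{O}$, and one checks that each such $\mathcal{O}$ carries only a short list of minimal spines --- typically an arc, a triod, or a small labelled graph of theta- or handcuff-type. The orbifold Euler-characteristic relation $\chi(\Gamma)=1-g=|G|\,\chi^{\mathrm{orb}}$ then both constrains the combinatorics and recovers $g$. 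Running through the finitely many $m_g$-families and computing the abstract graph each minimal spine determines, while recording coincidences, yields part (1): four abstract MS graphs for $g=11,241$, two for the eleven listed genera --- where two non-conjugate MS surface actions, or two minimal spines of a single $\mathcal{O}$, give isomorphic abstract graphs --- and one otherwise.

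For part (2) one fixes an abstract MS graph $\Gamma$ and studies the Heegaard-type splitting $S^3=V_g\cup N$ in which $\Gamma$ is a spine of $V_g$: the spatial graph is determined by the complement $N$ and the gluing, and one lists those compatible with an MS action. When $\Gamma$ is unknotted $N$ is a handlebody and the count is controlled by equivariant mapping-class data of $\Sigma_g$, giving one spatial MS graph generically, two for $g=9,121,361$, and one for $g=11$; when $g=21,481$ the complement $N$ is not a handlebody and an equivariant satellite-type construction with a free parameter produces infinitely many spatial MS graphs, which one must also show exhaust all of them. For part (3): a spatial MS graph $(S^3,\Gamma)$ has a unique MS action because any MS action extends to an $SO(4)$-action on $S^3$ and restricts to an MS action on $(S^3,\Sigma_g)$, unique up to conjugacy by the rigidity in \cite{WWZZ}; consequently an abstract MS graph has a unique MS action whenever it admits a unique spatial MS model, which leaves exactly the genus-$29$ anomaly --- in the $30(g-1)/7$ family --- to be analysed separately, where two combinatorially distinct MS actions must be shown to survive on the abstract graph while becoming identified once the embedding is fixed.

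The step I expect to be the main obstacle is this case-by-case production of the quotient orbifolds $V_g/G$ together with the enumeration of their minimal spines: converting the surface data of \cite{WWZZ} into explicit handlebody-orbifold pictures, establishing the minimality normal form, and --- most delicately --- deciding precisely when two minimal spines (from a single action, or from two different MS actions) are isomorphic as abstract graphs but not as spatial graphs, since this is exactly what produces the count four at $g=11,241$, the eleven "twos" of part (1), and the genus-$29$ exception in part (3). The knotted genera $g=21,481$ will also need a separate argument to confirm that the family of realisations is genuinely infinite and complete.
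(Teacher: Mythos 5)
Your overall strategy---pass to a $G$-invariant regular neighborhood, quotient to the handlebody orbifold $V_g/G\subset S^3/G$, constrain the quotient graph by the orbifold Euler characteristic, and then identify/distinguish the resulting graphs case by case---is exactly the route the paper takes. But there is a genuine gap at the structural step. You assert that each quotient orbifold ``carries only a short list of minimal spines --- typically an arc, a triod, or a small labelled graph of theta- or handcuff-type,'' and leave this as something ``one checks.'' The paper's Lemma 3.2 proves something much sharper, and your weaker version would derail the enumeration: writing $\chi(\Gamma/G)=\chi(|\Gamma/G|)-\sum'(1-1/|G_e|)/2$, where $\sum'$ runs over singular edges of $S^3/G$ meeting $\Gamma/G$ but not contained in it, the bound $|G|>4(g-1)$ gives $-1/4<\chi(\Gamma/G)<0$ while each term of $\sum'$ is at least $1/4$; minimality (no free edges) forces every leaf of $\Gamma/G$ to contribute at least two terms, and minimality (no extra vertices) forces every interior vertex to contribute at least one, so $|\Gamma/G|$ is a tree with exactly two leaves and no interior vertices, i.e.\ a \emph{single} allowable singular edge or dashed arc. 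Triods, theta-graphs and handcuffs simply do not occur, and without this reduction you cannot match the quotients against the finite list of allowable edges/arcs already computed in \cite{WWZZ}, which is what makes the whole classification finite and mechanical.

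The second place where your proposal stops short of a proof is the identification/distinguishing step, which you correctly flag as the main obstacle but for which you offer no method. The paper does this in two concrete ways: (a) it reconstructs each $\Gamma=p^{-1}(\gamma)$ as a coset graph from the representation $\phi:\pi_1(\gamma)\cong G_A\ast_{G_e}G_B\twoheadrightarrow G$ and computes invariants (degree sequence, edge count, diameter, girth) by computer to separate non-isomorphic graphs; and (b) for the coincidences it exhibits explicit isomorphisms $\eta,\psi$ making the square of surjections commute, which proves $G$-equivariant isomorphism of the covers (e.g.\ the $a'$ edge of $\mathcal{O}_{28}$ versus the $b$ edge of $\mathcal{O}_{34}$ at $g=11$, and Lemma 4.5 for the dashed arcs at $g=21,481$). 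Your appeal to ``equivariant mapping-class data of $\Sigma_g$'' for part (2) is not what is used: spatial distinctness comes from knottedness of the dual edges, and the genus-$29$ exception in part (3) is settled by a direct geometric observation (at the degree-$30$ vertex of $\Gamma_{28}^c(29)$ all edges are tangent to a plane, unlike $\Gamma_{15E}^a(29)$), not by an identification of the two actions after embedding. These are not stylistic differences; without (a) and (b) the counts ``four, two, one'' in part (1) and the exceptional cases in parts (2) and (3) are unverified.
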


Actually Theorem \ref{MS-graph} will be included in very precise
results (Theorem \ref{detailed-classification} and also Appendix A) in the coming text, which present all the  MS
graphs, as well as spatial MS graphs except $g=21, 481$. Of
course, the meaning of ``present all abstract and spatial MS graphs"
itself will be addressed soon.

Our approach relies on \cite{WWZZ} which builds the connection
between the study of $OE_g$ and orbifold theory. The paper is
organized as below.

%We consider
%three notions of equivalence between them: isomorphism of graphs;
%equivariant isomorphism of graphs (with an action of the same
%group); isomorphism of spatial graphs. In each notion we will try to
%give some classification results.

In Section \ref{orbifold}, we will give a brief introduction to the
orbifold theory, and introduce necessary terminologies to present
Theorem \ref{classify}, the main result of \cite{WWZZ}, which is a
list of spherical orbifolds $\mathcal{O}$ with marked (allowable)
singular edges and dashed arcs. Indeed we also try to outline the
ideas of \cite{WWZZ}.

In Section 3 by some quick arguments based on Section 2 we first
prove Theorem \ref{mg}. Then  by  picking information from Theorem
\ref{classify} exactly related to the maximum order $m_g$, and refining this information with respect
to the graph case, we present Theorem \ref{position} which list all
spatial MS graphs in the following sense: $\Gamma$ is a spatial MS
graph if and only if $\Gamma=p^{-1}(a)$, where $a$ is a marked
singular edge or dashed arc of a 3-orbifold $\mathcal{O}$ in Theorem
\ref{position} and $p:S^3\to \mathcal{O}$ is the orbifold covering.

Note that the information provided by Theorem \ref{position} in terms of
orbifolds does not tell us the following: Suppose $p^{-1}(a)$ and
$p^{-1}(b)$ are two MS graphs of genus $g$ provided by Theorem
\ref{position}. (1) Are they the same abstract graph? and if yes,
are they the same spatial graph? And more naively: (2) Can we see
$\Gamma=p^{-1}(a)$ as an abstract MS graph and as a MS spatial
graph  intuitively? And, as any graph theorist would ask, what are the primary graph invariants of those graphs?

In Section 4, various methods are introduced to give the detailed
classification result Theorem \ref{detailed-classification}, which
give a precise answer to Question (1).

The answer to Question (2) for abstract graphs is in Section \ref{pictureG} and Appendix
\ref{appendixA}, which gives a table of all MS graphs with various
invariants.
Appendix \ref{appendixB} is devoted to answer Question
(2) for spatial graphs, where we try to visualize those $\Gamma\subset S^3$ by stereographically projecting them onto $R^3$, at least for general cases and the cases related to classical regular polyhedra.
The pictures in both Appendices \ref{appendixA} and \ref{appendixB} are
produced by the computer with assistance of \cite{GAP} and \cite{Mathematica}.

%mathematics in Section 4 with assistance of.

Certainly the roles of the figures in those appendices are limited,
since they are hard to see for large $g$, especially for spatial graphs.
An alternative  intuition  of those symmetries
are given in the  section  ``Intuitive view of large symmetries of $(S^3, \Sigma_g)$"  in \cite{WWZZ1} via spherical tessellations and equivariant Dehn surgeries, also see \cite{Du2}, \cite{WWZZ} and \cite{Wa}.

In Section 5 we will discuss maximum orders of extendable finite
group actions on $(S^3, \Gamma)$ allowing  orientation-reversing
elements based on the results in previous sections and \cite{WWZ} . We will see  differences between the
maximum orders of arbitrary  graphs and of minimal graphs: The
former can be determined and the later are still unknown for some values of $g$. To be precise, let  $M_g$ be the general maximum orders of
extendable group actions on minimal graphs  of genus $g$. Then $M_g$
are given in the following table.

\begin{center}
\begin{tabular}{|c|c|}
\hline $M_g$ & $g$\\\hline $24(g-1)$ & $3, 4, 5, 6, 11, 17,
97, 601$\\\hline $16(g-1)$ & $7, 9, 73$\\\hline $40(g-1)/3$ & $16,
19$\\\hline $12(g-1)$ & $2, 25, 121, 241$\\\hline $48(g-1)/5$ &
$41$\\\hline $60(g-1)/7$ & $29, 841, 1681$\\\hline $8(\sqrt{g}+1)^2$
& $k^2$, $k\neq 11$\\\hline $8(g+1)> M_g \ge 4(g+1)$ & remaining numbers\\\hline
\end{tabular}
\end{center}

\begin{conjecture}
Suppose $g$ is neither a square number  nor one of those finitely
many $g$ listed in the table above. Then $M_g$ is $4(g+1)$ for prime
$g$, and $4(p+1)(q+1)$ otherwise, where $pq=g$, $p$ is the smallest
non trivial divisor of $g$.
\end{conjecture}

\bigskip\noindent\textbf{Acknowledgement}.
We thank the referee for his many valuable comments which considerably improved our paper.

 The first three authors are partially supported by grants No.11501534, No.11371034 and No.11501239 of the National Natural Science Foundation of China respectively.

\section{3-orbifold and main results in \cite{WWZZ}}\label{orbifold}

For orbifold theory, see \cite{Th}, \cite{Du1} or \cite{BMP}. We
give a brief introduction here for later use.
%In this section everything is in orientable category.

All of the $n$-orbifolds that we considered have the form $M/H$. Here $M$ is an
orientable $n$-manifold and $H$ is a finite group acting faithfully
on $M$, preserving orientation. For each point $x\in M$, denote
its stable subgroup by $St(x)$, its image in $M/H$ by $x'$. If
$|St(x)|>1$, $x'$ is called a \textit{singular} point with \textit{index} $|St(x)|$,
otherwise it is called a \textit{regular} point. If we forget the singular
set we get the topological \textit{underlying space} $|M/H|$ of the orbifold.

We can also define covering spaces and the fundamental group of an
orbifold. There is an one-to-one correspondence between orbifold
covering spaces and conjugacy classes of subgroups of the
fundamental group, and regular covering spaces correspond to normal
subgroups. A Van-Kampen theorem is also valid, see \cite[Corollary 2.3]{BMP}. In
the following, automorphisms, covering spaces and fundamental groups
always refer to the orbifold setting.

We call $B^n/H$ (resp. $S^n/H,V_g/H)$ the \textit{discal (resp. spherical, handlebody)
orbifold}. Here $B^n$ (resp. $S^n$) denotes the $n$-dimensional ball (resp. sphere).
By classical results, $B^2/H$ is a disk, possibly with one singular
point; $B^3/H$ belongs to one of the five models in Figure
\ref{fig:1}, corresponding to the five classes of finite subgroups
of $SO(3)$. Here the labeled numbers denote indices of interior
points of the corresponding edges. $V_g/H$ can be obtained by
pasting finitely many $B^3/H$ along some $B^2/H$ in their
boundaries. It is easy to see that the singular set of a 3-orbifold $M/H$
is always a trivalent graph $\Theta$.
%For each vertex $v$ of $\Gamma$,
%call the indices $ (p,q,r)$ of its three edges the data of $v$.

\begin{figure}[h]
\centerline{\scalebox{0.6}{\includegraphics{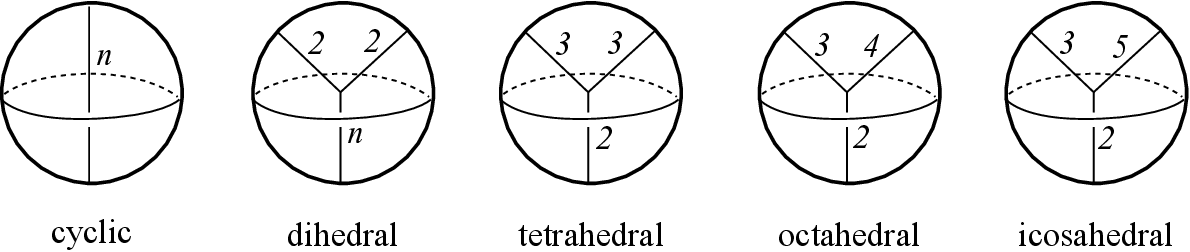}}} \caption{Five
models}\label{fig:1}
\end{figure}

Suppose $G$ acts on $(S^3, \Sigma_g)$. Call a 2-orbifold
$\mathcal{F}=\Sigma_g/G \subset \mathcal{O}=S^3/G$ {\it
allowable} if $|G|>4(g-1)$. A sequence of observations about
allowable 2-orbifolds were made in \cite{WWZZ} (Lemma 2.4, 2.7, 2.8,
2.9, 2.10), in particular: Suppose $\mathcal{F} \subset \mathcal{O}$
is allowable, then (i) $|\mathcal{O}|=S^3$; (ii) $\mathcal{F}\subset
\mathcal{O}$ is $\pi_1$-surjective; (iii) $|\mathcal{F}|=S^2$ with
four singular points having one of the following types:
$(2,2,2,n)(n\ge 3)$, $(2,2,3,3)$, $(2,2,3,4)$, $(2,2,3,5)$; and very
crucially (iv) $\mathcal{F}$ bounds a handlebody orbifold, which is
a regular neighborhood of either an edge of the singular set or a
dashed arc, presented in (a) or (b) of Figure \ref{fig:2}. Here
labels are omitted in (a), and more description of (b) will be given
later. (i) allows us to consider only Dunbar's famous list in
\cite{Du1} of all spherical 3-orbifolds with underlying space $S^3$.
Searching for all possible 2-suborbifolds that satisfy the
conditions (ii), (iii) and (iv) by further analysis from
topological, combinatoric, numerical, and group theoretical aspects
leads to a list in Theorem 6.1 of \cite{WWZZ}, presented here as
Theorem \ref{classify}. We will first need
to explain the terminology in the statement of Theorem \ref{classify} and the notation in the accompanying tables.

\begin{figure}[h]
\centerline{\scalebox{0.5}{\includegraphics{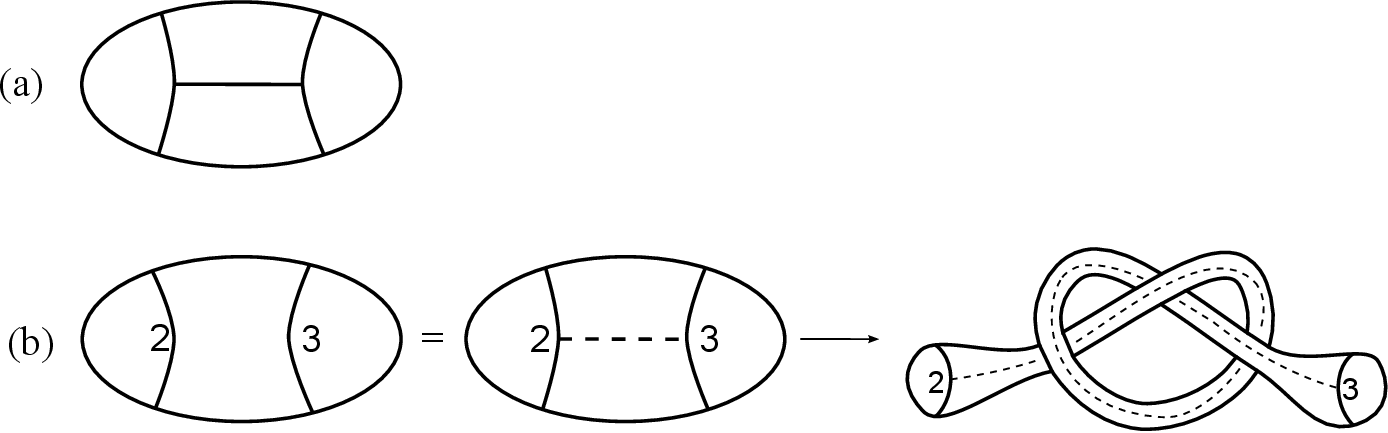}}}
\caption{Handlebody orbifolds}\label{fig:2}
\end{figure}

Since all the spherical 3-orbifolds we considered have underlying
space $S^3$, they are determined by their labeled singular trivalent
graphs. From now on, a singular edge always means an edge of
$\Theta$, the singular set of the orbifold;
singular edges with index $2$ are not labeled; and a dashed arc is
always a regular arc with two ends at two edges of $\Theta$ with
indices 2 and 3 as in Figure \ref{fig:2}(b). An
edge/dashed arc is {\it allowable} if the boundary of its regular
neighborhood is an allowable 2-orbifold.

For each 3-orbifold $\mathcal{O}$ in the list, the order of
$\pi_1(\mathcal{O})$ is given first. Then singular edges/dashed
arcs are listed, which are marked by letters $a, b, c,...$ to denote
the boundaries of their regular neighborhoods. Then singular types
of the boundaries and genera of their pre-images in $S^3$ are given.
When the singular type is $(2, 2, 3, 3)$, there are two subtypes
denoted by I and II, corresponding to Figure \ref{fig:2}(a) and
Figure \ref{fig:2}(b) (exactly the dashed arc case).

We say that an orientable separating 2-suborbifold (2-subsurface)
$\mathcal{F}$ in an orientable 3-orbifold (3-manifold) $\mathcal{O}$
is \textit{unknotted} or \textit{knotted}, depending on whether it bounds handlebody
orbifolds (handlebodies) on both sides. A singular edge/dashed arc
is \textit{unknotted} or \textit{knotted}, depending on whether the boundary of its
regular neighborhood is unknotted or knotted.

If a marked singular edge/dashed arc is knotted, then it has a subscript `$k$'. If a marked dashed arc is unknotted, then there also
exists a knotted one (indeed infinitely many) and it has a subscript `$uk$'. Call two singular edges/dashed arcs \textit{equivalent},
if there is an orbifold automorphism sending one to the other, or
the boundaries of their regular neighborhoods as 2-orbifolds are
orbifold-isotopic.

The way to list orbifolds in Theorem \ref{classify} is influenced by the lists of \cite{Du1} and \cite{Du2}. The labels below the orbifold pictures come from \cite[Tables I, II and III]{WWZZ}. In picture 15E and picture 19, the letter $n$ refers to particular choices of parameters in infinite families.

\begin{theorem}\label{classify}
Up to equivalence, the following tables list all allowable singular
edges/dashed arcs except those of type II. In the type II case, if
there exists an allowable dashed arc in some $\mathcal{O}$, then
$\mathcal{O}$ and one such arc in it are listed. The arc will be
unknotted if there exists an unknotted one in $O$.
\end{theorem}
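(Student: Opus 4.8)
The plan is to run through Dunbar's classification \cite{Du1} of spherical $3$-orbifolds with underlying space $S^3$ and, for each such orbifold $\mathcal{O}$, to enumerate the separating orientable $2$-suborbifolds $\mathcal{F}$ that could be allowable, testing each candidate against the properties (ii), (iii), (iv) recalled above. Property (iii) already restricts the search drastically: $|\mathcal{F}|$ must be $S^2$ with exactly four cone points whose orders form one of the types $(2,2,2,n)$ $(n\ge 3)$, $(2,2,3,3)$, $(2,2,3,4)$, $(2,2,3,5)$. By property (iv), $\mathcal{F}$ is the boundary of a regular neighborhood of a singular edge of the trivalent graph $\Theta\subset\mathcal{O}$ or of a dashed arc joining an index-$2$ edge to an index-$3$ edge as in Figure \ref{fig:2}(b); so in practice one simply lists the singular edges and admissible dashed arcs of each $\mathcal{O}$ and reads off the four cone points of $\partial N(a)$ from the local $B^3/H$ models of Figure \ref{fig:1}. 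This turns the problem into a finite check over a finite list.

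For each surviving candidate $a$ three pieces of data must be extracted. First, $\pi_1$-surjectivity of $\mathcal{F}\hookrightarrow\mathcal{O}$: this is verified via the orbifold Van-Kampen theorem applied to the decomposition of $\mathcal{O}$ obtained by cutting along $\mathcal{F}$, and it is here that the combinatorial and group-theoretical analysis from \cite{WWZZ} enters, since failure of surjectivity would make the preimage in $S^3$ disconnected and hence not a (connected) graph. Second, the genus $g$ of $p^{-1}(a)\subset S^3$: since $p^{-1}(a)$ is the regular cover of the $1$-orbifold $a$ corresponding to $\pi_1(\mathcal{O})$, its genus follows from an Euler-characteristic (Riemann--Hurwitz) count using $|\pi_1(\mathcal{O})|$, the index along $a$, and the indices at its endpoints; this count also records the order $|G|$ of the acting group, and one checks the allowability inequality $4(g-1)<|G|$. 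Third, the knotting status: $a$ is unknotted iff $\mathcal{F}$ bounds a handlebody orbifold on \emph{both} sides, which amounts to recognizing $\mathcal{O}\setminus N(a)$ as a handlebody orbifold (again using the models of Figure \ref{fig:1}) and, when that fails, producing an incompressibility certificate for $\mathcal{F}$ on the far side.

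Finally one removes redundancy: two edges or dashed arcs are identified if an orbifold automorphism of $\mathcal{O}$ carries one to the other or their boundary $2$-orbifolds are orbifold-isotopic, so the last step is to compute $\mathrm{Aut}(\mathcal{O})$ for each $\mathcal{O}$ in Dunbar's list and pass to orbits. The type II (dashed-arc) case is treated separately at the end for the reason noted in the statement: whenever an unknotted allowable dashed arc exists in some $\mathcal{O}$, re-tying the arc produces infinitely many inequivalent knotted companions, so one cannot list them all and instead records a single representative in each such $\mathcal{O}$, choosing the unknotted one when it exists.

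I expect the genuine obstacle to be the knotting/unknotting determination rather than the bookkeeping. Deciding whether $\mathcal{O}\setminus N(\mathcal{F})$ is a handlebody orbifold — equivalently whether $\mathcal{F}$ is compressible on the complementary side — is not a numerical matter and requires real $3$-orbifold topology (exhibiting compressing discs, or proving incompressibility via the normal-surface or fundamental-group structure of the complement). This difficulty is sharpest in the type II case, where it forces the weaker conclusion of the theorem (one representative per ambient $\mathcal{O}$, unknotted if possible, rather than a full classification of knotted arcs). A secondary delicate point is carrying out the $\pi_1$-surjectivity verification uniformly over the whole of Dunbar's list, though once the Van-Kampen setup is fixed this becomes essentially mechanical.
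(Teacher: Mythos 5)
Your proposal follows essentially the same route the paper describes: the theorem is imported from [WWZZ] (Theorem 6.1 there), and the paper's own outline — restrict to Dunbar's list of spherical orbifolds with underlying space $S^3$, filter candidate $2$-suborbifolds by $\pi_1$-surjectivity, the four-cone-point types, and the handlebody-neighborhood condition, then compute genus, knotting status, and pass to orbits under orbifold automorphisms — is exactly the strategy you lay out, including the reason the type II case only admits one representative per orbifold. Your identification of the knotting/unknotting determination as the genuinely non-mechanical step is consistent with how the paper treats it.
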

\begin{table}[h]
\caption{Fibred case: type is $(2, 2, 3, 3)$}\label{tab:fibred2233}
\end{table}
\begin{center}
\scalebox{0.4}{\includegraphics*[0pt,0pt][152pt,152pt]{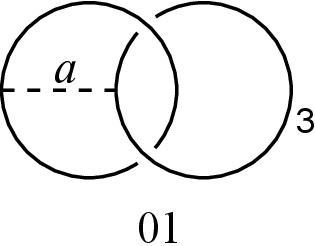}}
\raisebox{45pt} {\parbox[t]{102pt}{$|G|=6$\\$a_{uk}$: II, $g=2$}}%01
\scalebox{0.4}{\includegraphics*[0pt,0pt][152pt,152pt]{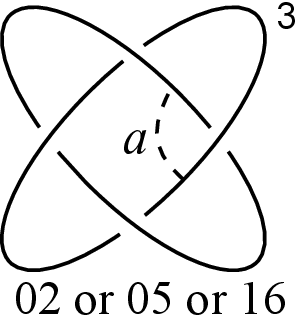}}
\raisebox{45pt} {\parbox[t]{102pt}{$|G|=18$\\$a_{uk}$: II, $g=4$}}%020516
\end{center}

\begin{center}
\scalebox{0.4}{\includegraphics*[0pt,0pt][152pt,152pt]{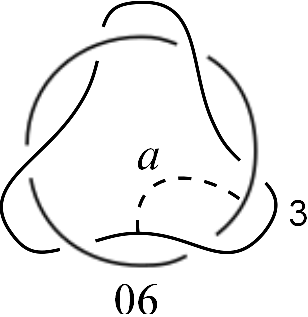}}
\raisebox{45pt} {\parbox[t]{102pt}{$|G|=48$\\$a_{uk}$: II, $g=9$}}%06
\scalebox{0.4}{\includegraphics*[0pt,0pt][152pt,152pt]{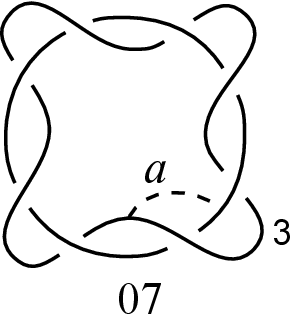}}
\raisebox{45pt} {\parbox[t]{102pt}{$|G|=144$\\$a_{uk}$: II, $g=25$}}%07
\end{center}

\begin{center}
\scalebox{0.4}{\includegraphics*[0pt,0pt][152pt,152pt]{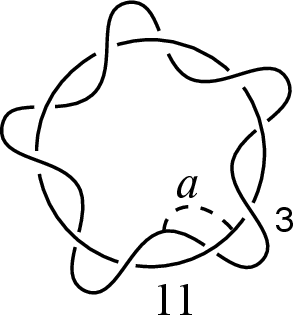}}
\raisebox{45pt} {\parbox[t]{102pt}{$|G|=720$\\$a_{uk}$: II,
$g=121$}}%11
\scalebox{0.4}{\includegraphics*[0pt,0pt][152pt,152pt]{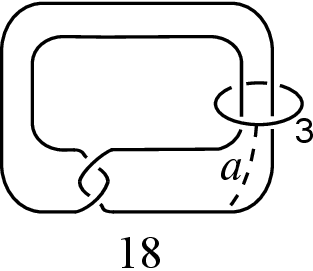}}
\raisebox{45pt} {\parbox[t]{102pt}{$|G|=144$\\$a_{uk}$: II, $g=25$}}
\end{center}

\begin{center}
\scalebox{0.4}{\includegraphics*[0pt,0pt][152pt,152pt]{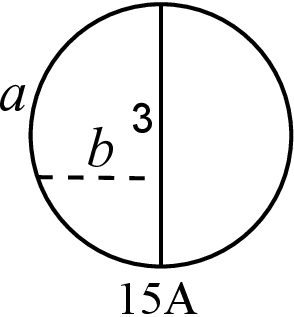}}
\raisebox{45pt} {\parbox[t]{102pt}{$|G|=6$\\$a$: I, $g=2$\\$b_{uk}$:
II, $g=2$}}%15A
\scalebox{0.4}{\includegraphics*[0pt,0pt][152pt,152pt]{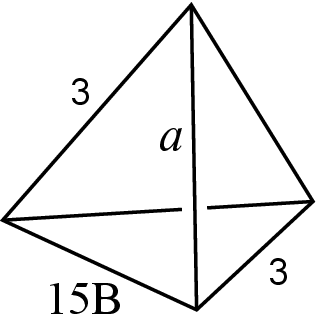}}
\raisebox{45pt} {\parbox[t]{102pt}{$|G|=18$\\$a$: I, $g=4$}}
\end{center}

\begin{center}
\scalebox{0.4}{\includegraphics*[0pt,0pt][152pt,152pt]{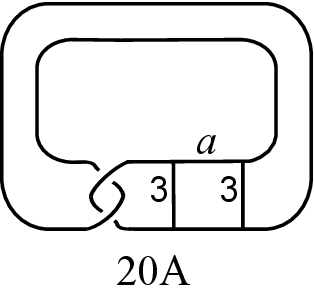}}
\raisebox{45pt} {\parbox[t]{102pt}{$|G|=144$\\$a$: I, $g=25$}}
\scalebox{0.4}{\includegraphics*[0pt,0pt][152pt,152pt]{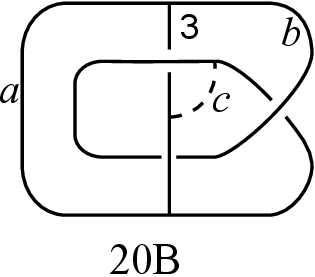}}
\raisebox{45pt} {\parbox[t]{102pt}{$|G|=48$\\$a$: I, $g=9$\\$b_k$:
I, $g=9$\\$c_{uk}$: II, $g=9$}}
\end{center}

\begin{center}
\scalebox{0.4}{\includegraphics*[0pt,0pt][152pt,152pt]{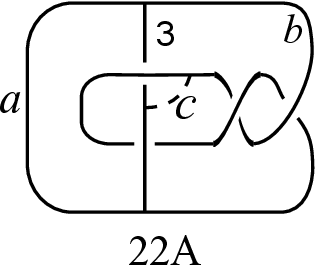}}
\raisebox{45pt} {\parbox[t]{102pt}{$|G|=720$\\$a$: I,
$g=121$\\$b_k$: I, $g=121$\\$c_{uk}$: II, $g=121$}} \hspace*{170pt}
\end{center}

\begin{table}[h]
\caption{Fibred case: type is not $(2, 2, 3,
3)$}\label{tab:fibnot2233}
\end{table}
\begin{center}
\scalebox{0.4}{\includegraphics*[0pt,0pt][152pt,152pt]{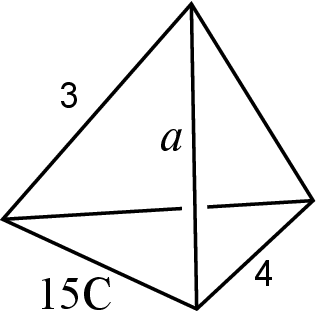}}
\raisebox{45pt} {\parbox[t]{102pt}{$|G|=24$\\$a$:(2,2,3,4), $g=6$}}
\scalebox{0.4}{\includegraphics*[0pt,0pt][152pt,152pt]{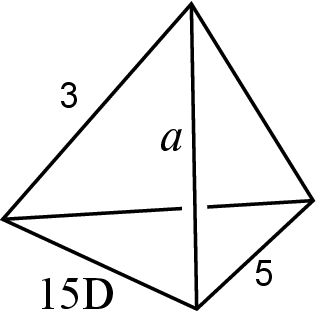}}
\raisebox{45pt} {\parbox[t]{102pt}{$|G|=30$\\$a$:(2,2,3,5), $g=8$}}
\end{center}

\begin{center}
\scalebox{0.4}{\includegraphics*[0pt,0pt][152pt,152pt]{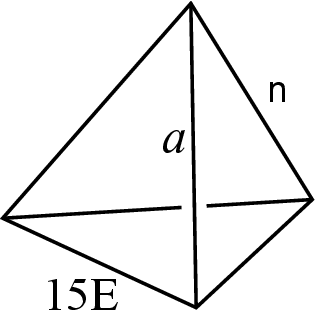}}
\raisebox{45pt}{\parbox[t]{102pt}{$|G|=4n$\\$a$:(2,2,2,n)\\$g=n-1$}}
\scalebox{0.4}{\includegraphics*[0pt,0pt][152pt,152pt]{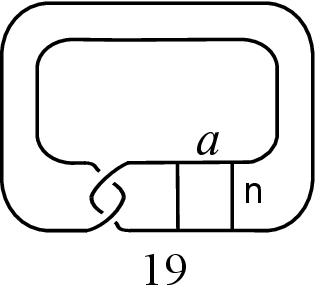}}
\raisebox{45pt}{\parbox[t]{102pt}{$|G|=4n^2$\\$a$:(2,2,2,n)\\$g=(n-1)^2$}}
\end{center}

\begin{center}
\scalebox{0.4}{\includegraphics*[0pt,0pt][152pt,152pt]{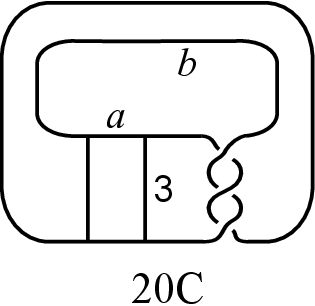}}
\raisebox{45pt} {\parbox[t]{102pt}{$|G|=96$\\$a$:(2,2,2,3),
$g=9$\\$b_{k}$:(2,2,2,3), $g=9$}}
\scalebox{0.4}{\includegraphics*[0pt,0pt][152pt,152pt]{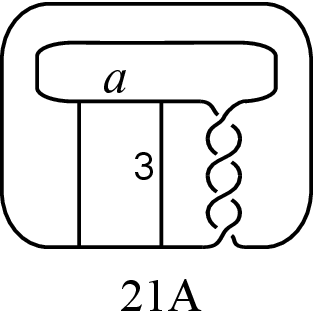}}
\raisebox{45pt} {\parbox[t]{102pt}{$|G|=288$\\$a$:(2,2,2,3),
$g=25$}}
\end{center}

\begin{center}
\scalebox{0.4}{\includegraphics*[0pt,0pt][152pt,152pt]{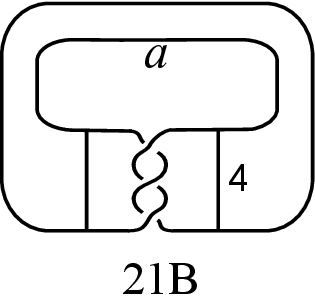}}
\raisebox{45pt} {\parbox[t]{102pt}{$|G|=384$\\$a$:(2,2,2,4),
$g=49$}}
\scalebox{0.4}{\includegraphics*[0pt,0pt][152pt,152pt]{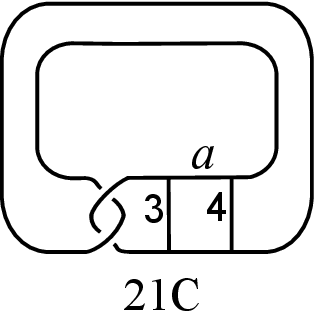}}
\raisebox{45pt} {\parbox[t]{102pt}{$|G|=576$\\$a$:(2,2,3,4),
$g=121$}}
\end{center}

\begin{center}
\scalebox{0.4}{\includegraphics*[0pt,0pt][152pt,152pt]{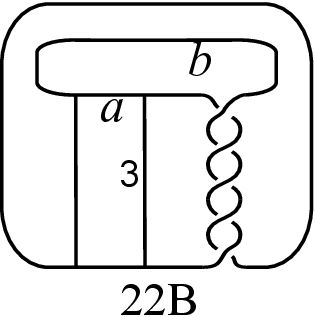}}
\raisebox{45pt} {\parbox[t]{102pt}{$|G|=1440$\\$a$:(2,2,2,3),
$g=121$\\$b_{k}$:(2,2,2,3), $g=121$}}
\scalebox{0.4}{\includegraphics*[0pt,0pt][152pt,152pt]{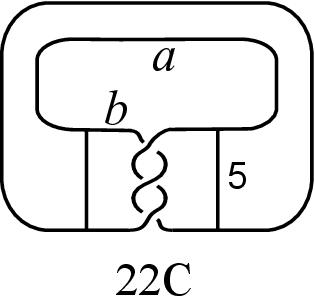}}
\raisebox{45pt} {\parbox[t]{102pt}{$|G|=2400$\\$a$:(2,2,2,5),
$g=361$\\$b_{k}$:(2,2,2,5), $g=361$}}
\end{center}

\begin{center}
\scalebox{0.4}{\includegraphics*[0pt,0pt][152pt,152pt]{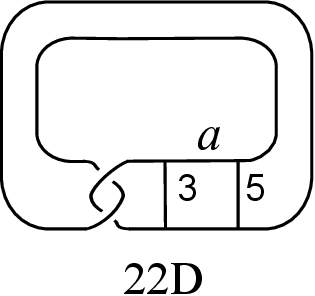}}
\raisebox{45pt} {\parbox[t]{102pt}{$|G|=3600$\\$a$:(2,2,3,5),
$g=841$}}\hspace*{170pt}
\end{center}

\begin{table}[h]
\caption{Non-fibred case}\label{tab:nonfibre}
\end{table}
\begin{center}
\scalebox{0.4}{\includegraphics*[0pt,0pt][152pt,152pt]{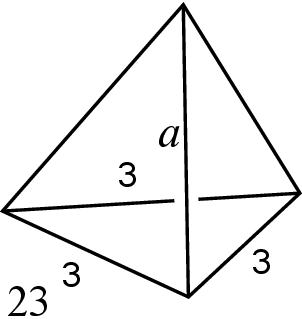}}
\raisebox{45pt} {\parbox[t]{102pt}{$|G|=96$\\$a$: I, $g=17$}}
\scalebox{0.4}{\includegraphics*[0pt,0pt][152pt,152pt]{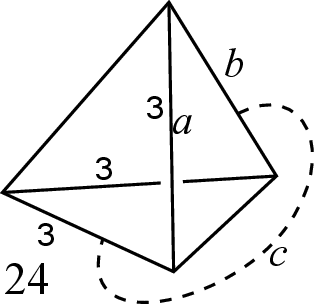}}
\raisebox{45pt} {\parbox[t]{102pt}{$|G|=60$\\$a$:(2,2,2,3),
$g=6$\\$b$: I, $g=11$\\$c_{k}$: II, $g=11$}}
\end{center}

\begin{center}
\scalebox{0.4}{\includegraphics*[0pt,0pt][152pt,152pt]{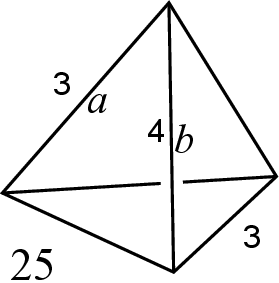}}
\raisebox{45pt} {\parbox[t]{102pt}{$|G|=576$\\$a$:(2,2,2,4),
$g=73$\\$b$: I, $g=97$}}
\scalebox{0.4}{\includegraphics*[0pt,0pt][152pt,152pt]{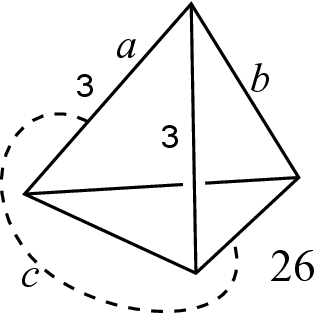}}
\raisebox{45pt} {\parbox[t]{102pt}{$|G|=24$\\$a$:(2,2,2,3),
$g=3$\\$b$: I, $g=5$\\$c_k$: II, $g=5$}}
\end{center}

\begin{center}
\scalebox{0.4}{\includegraphics*[0pt,0pt][152pt,152pt]{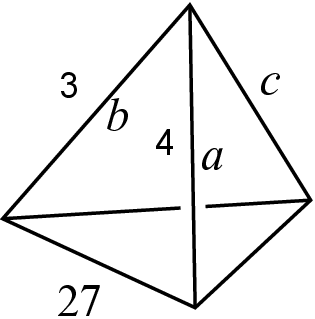}}
\raisebox{45pt} {\parbox[t]{102pt}{$|G|=48$\\$a$:(2,2,2,3),
$g=5$\\$b$:(2,2,2,4), $g=7$\\$c$:(2,2,3,4), $g=11$}}
\scalebox{0.4}{\includegraphics*[0pt,0pt][152pt,152pt]{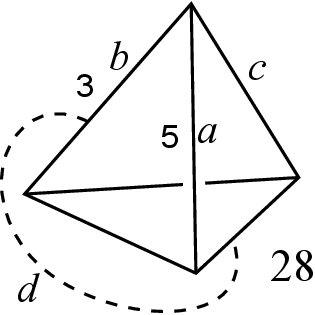}}
\raisebox{45pt} {\parbox[t]{102pt}{$|G|=120$\\$a$:(2,2,2,3),
$g=11$\\$b$:(2,2,2,5), $g=19$\\$c$:(2,2,3,5), $g=29$\\$d_{k}$: II,
$g=21$}}
\end{center}

\begin{center}
\scalebox{0.4}{\includegraphics*[0pt,0pt][152pt,152pt]{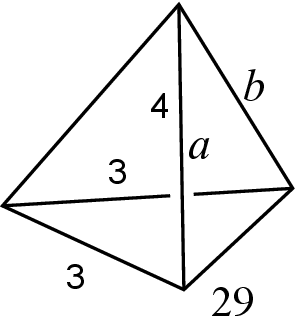}}
\raisebox{45pt} {\parbox[t]{102pt}{$|G|=192$\\$a$:(2,2,2,3),
$g=17$\\$b$:(2,2,3,4), $g=41$}}
\scalebox{0.4}{\includegraphics*[0pt,0pt][152pt,152pt]{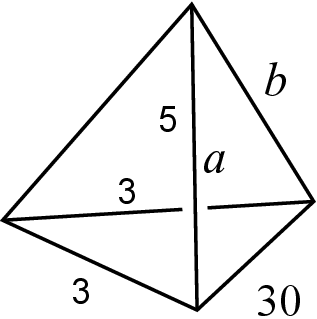}}
\raisebox{45pt} {\parbox[t]{102pt}{$|G|=7200$\\$a$:(2,2,2,3),
$g=601$\\$b$:(2,2,3,5), $g=1681$}}
\end{center}

\begin{center}
\scalebox{0.4}{\includegraphics*[0pt,0pt][152pt,152pt]{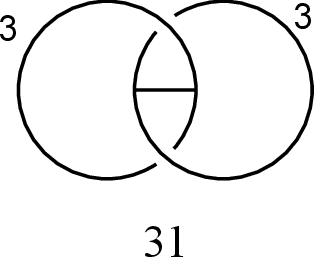}}
\raisebox{45pt} {\parbox[t]{102pt}{$|G|=288$\\No allowable\\
2-suborbifold}}
\scalebox{0.4}{\includegraphics*[0pt,0pt][152pt,152pt]{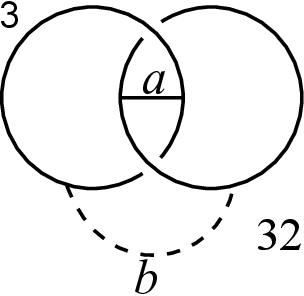}}
\raisebox{45pt} {\parbox[t]{102pt}{$|G|=24$\\$a$: I,
$g=5$\\$b_{uk}$: II, $g=5$}}
\end{center}

\begin{center}
\scalebox{0.4}{\includegraphics*[0pt,0pt][152pt,152pt]{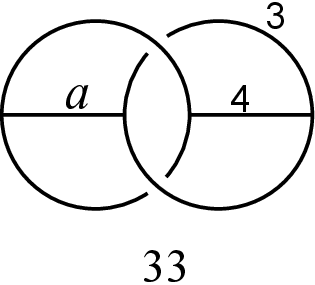}}
\raisebox{45pt} {\parbox[t]{102pt}{$|G|=1152$\\$a$:(2,2,2,3),
$g=97$}}
\scalebox{0.4}{\includegraphics*[0pt,0pt][152pt,152pt]{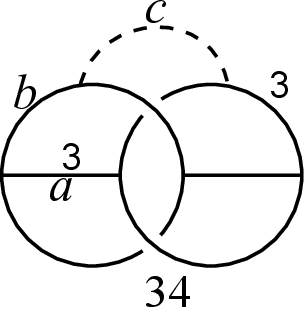}}
\raisebox{45pt} {\parbox[t]{102pt}{$|G|=120$\\$a$:(2,2,2,3),
$g=11$\\$b_{k}$:(2,2,2,3), $g=11$\\$c_{k}$: II, $g=21$}}
\end{center}

\begin{center}
\scalebox{0.4}{\includegraphics*[0pt,0pt][152pt,152pt]{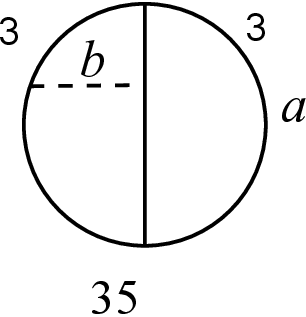}}
\raisebox{45pt} {\parbox[t]{102pt}{$|G|=12$\\$a$: I,
$g=3$\\$b_{uk}$: II, $g=3$}}
\scalebox{0.4}{\includegraphics*[0pt,0pt][152pt,152pt]{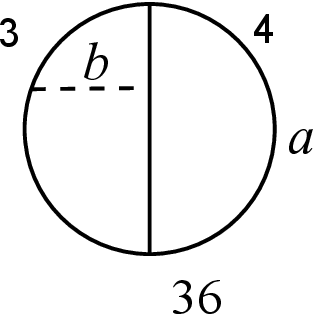}}
\raisebox{45pt} {\parbox[t]{102pt}{$|G|=24$\\$a$: I,
$g=5$\\$b_{uk}$: II, $g=5$}}
\end{center}

\begin{center}
\scalebox{0.4}{\includegraphics*[0pt,0pt][152pt,152pt]{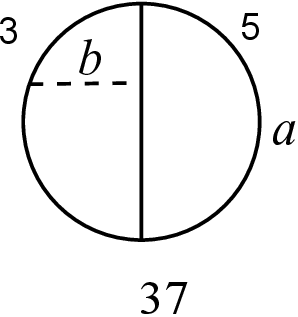}}
\raisebox{45pt} {\parbox[t]{102pt}{$|G|=60$\\$a$: I,
$g=11$\\$b_{uk}$: II, $g=11$}}
\scalebox{0.4}{\includegraphics*[0pt,0pt][152pt,152pt]{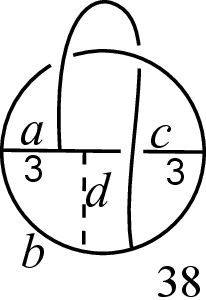}}
\raisebox{45pt} {\parbox[t]{102pt}{$|G|=2880$\\$a, b_k,
c_k$:(2,2,2,3)\\$g=241$\\$d_k$: II, $g=481$}}
\end{center}

\begin{center}
\scalebox{0.4}{\includegraphics*[0pt,0pt][152pt,152pt]{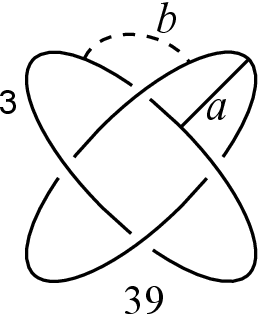}}
\raisebox{45pt} {\parbox[t]{102pt}{$|G|=576$\\$a$: I,
$g=97$\\$b_{uk}$: II, $g=97$}}
\scalebox{0.4}{\includegraphics*[0pt,0pt][152pt,152pt]{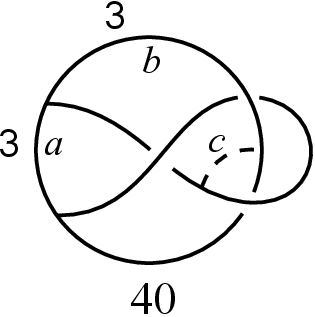}}
\raisebox{45pt} {\parbox[t]{102pt}{$|G|=1440$\\$a$: I,
$g=241$\\$b_k$: I, $g=241$\\$c_{uk}$: II, $g=241$}}
\end{center}

%\begin{remark}
%The way to list orbifolds in Theorem \ref{classify} is influenced by the lists of \cite{Du1} and \cite{Du2}.
%\end{remark}
%\begin{remark}
%This Theorem does not give all the allowable dashed arcs. But every
%allowable dashed arc must belong to one of the above spherical
%orbifolds.
%\end{remark}

\section{Edges in orbifolds provide  $(S^3, \Gamma)$ with maximum symmetry}
\subsection{Maximum orders of symmetries on $(S^3, \Gamma)$}

We first prove Theorem \ref{mg}.
The following primary fact is used repeatedly and implicitly in the proof.

\begin{proposition} \label{SO(4)}
Suppose $G$ is a finite group of $SO(4)$ acting on $(S^3, T)$. Here
$T$ is a polyhedron which can not be embedded into a circle (in particular, $T$ may be either a surface, or a handlebody, or
a graph with $g>1$). Then the restriction of $G$ on $T$ is faithful.
\end{proposition}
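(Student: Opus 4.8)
The plan is to argue by contradiction: suppose the restriction homomorphism $\rho\colon G\to \operatorname{Homeo}(T)$ is not faithful, and let $1\neq h\in\ker\rho$. Then $h$ is a nontrivial orientation-preserving isometry of $S^3$ (an element of $SO(4)$) which fixes $T$ pointwise. First I would recall the structure of the fixed point set of a nontrivial element of $SO(4)$ acting on $S^3$: since $h$ has finite order and is orthogonal, $\operatorname{Fix}(h)$ is a great subsphere $S^k\subset S^3$ with $k\in\{-1,0,1\}$ (it cannot be $S^2$ or $S^3$, as an orientation-preserving orthogonal map fixing a great $2$-sphere would be the identity). So $\operatorname{Fix}(h)$ is either empty, two antipodal points, or a great circle $S^1$.

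The key step is then purely about the geometry of $T$: since $h$ fixes $T$ pointwise, we must have $T\subseteq \operatorname{Fix}(h)\subseteq S^1$. This forces $T$ to embed into a circle, contradicting the hypothesis on $T$. I would spell out why each of the listed cases — $T$ a closed surface, a handlebody, or a graph of genus $g>1$ — indeed cannot be embedded in a circle: a graph embeds in $S^1$ iff it is homeomorphic to a subspace of $S^1$, i.e. a point, an arc, or a circle, all of which have free fundamental group of rank $\le 1$, whereas $g(\Gamma)>1$; a surface or handlebody is not even $1$-dimensional. This contradiction shows $\ker\rho$ is trivial, i.e. $\rho$ is faithful.

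The one point requiring a little care — and the main (minor) obstacle — is justifying that $\operatorname{Fix}(h)$ is a \emph{great} subsphere of dimension at most $1$ for an arbitrary nontrivial $h\in SO(4)$, not merely for elements of prime order. This follows from elementary linear algebra: $\operatorname{Fix}(h)$ is $S^3\cap\ker(h-I)$, and $\ker(h-I)$ is a linear subspace of $\mathbb R^4$; if it had dimension $3$ then $h$ would be a reflection, hence orientation-reversing, contradiction; if dimension $4$ then $h=I$. Hence $\dim\ker(h-I)\le 2$ and $\operatorname{Fix}(h)$ is a great sphere of dimension $\le 1$, as claimed. (Alternatively one may invoke the standard fact, already used implicitly in the orbifold discussion of Section~\ref{orbifold}, that the singular set of an orientation-preserving finite group action on $S^3$ is a trivalent graph, so in particular no group element fixes a $2$-complex pointwise.) With this in hand the argument above goes through verbatim, and the proposition follows.
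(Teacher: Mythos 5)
Your proposal is correct and follows essentially the same route as the paper: both arguments observe that a nontrivial element of $SO(4)$ fixing $T$ pointwise would have $T$ contained in its fixed-point set, which for an orientation-preserving orthogonal map is at most a great circle, contradicting the hypothesis that $T$ does not embed in a circle. Your extra linear-algebra justification of the structure of $\operatorname{Fix}(h)$ (which the paper cites as a classical result) is a harmless elaboration; note only that the ``two antipodal points'' case you allow cannot actually occur for an orientation-preserving element, though this does not affect the argument.
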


\begin{proof} Suppose $g\in G$ and the restriction of its action on $T$ is the identity.
As an orientation preserving isometry, its
fixed point set $\text{Fix}(g)$ is either the empty set, or a
circle, or the whole $S^3$. Since $T\subset  \text{Fix}(g)$ and $T$
is not a subset of a circle, we have $\text{Fix}(g)=S^3$, and hence
$g$ is the identity of $G\subset SO(4)$.
\end{proof}

\begin{proof}[Proof of Theorem \ref{mg}]
We can assume that $S^3$ has the standard spherical geometry and
$G\subset SO(4)$. Suppose $G$ acts on $(S^3, \Gamma)$ for some graph
$\Gamma\subset S^3$ of genus $g$. Let $N_\epsilon(\Gamma)=\{x \in
S^3\mid dist(x, \Gamma)\leq \epsilon\}$ be the
$\epsilon$-neighborhood of $\Gamma$. When $\epsilon>0$ is
sufficiently small, $N_\epsilon(\Gamma)$ is a handlebody of genus
$g$. Since $G$ acts isometrically, $N_\epsilon(\Gamma)$ is invariant
under the group action. Notice that generally $\partial
N_\epsilon(\Gamma)$ is not smooth. But we can choose a smaller
equivariant neighborhood $U_\epsilon$ of $\Gamma$ such that
$\partial U_\epsilon$ is a smooth submanifold in $S^3$ and $\partial
U_\epsilon\simeq\Sigma_g$ (see \cite[Remark 3.3]{WWZ}). Hence we
have a $G$-action on $(S^3, \Sigma_g)$, and it follows that $m_g\le
OE_g$.

Suppose $G$ acts on $(S^3, \Sigma_g)$ for some $\Sigma_g\subset S^3$
and $|G|>4(g-1)$. Then by \cite[Proposition 2.5]{WWZZ}, $\Sigma_g$ bounds an
handlebody $V_g$ in $S^3$. Since $G$ acts orientation preservingly
on both $S^3$ and $\Sigma_g$, $V_g$ is invariant under the group
action, and moreover by \cite[Lemma 2.9]{WWZZ} $V_g/G$ is a $N(a)$, the regular neighborhood
of an allowable singular edge/dashed arc in an orbifold
$\mathcal{O}=S^3/G$ listed in Theorem \ref{classify}. Let $p: S^3
\to \mathcal{O}$ be the branched covering. Then $\Gamma=p^{-1}(a)$
is a connected graph which is invariant under the $G$-action. $V_g$
is a regular neighborhood of $\Gamma$, therefore the genus of
$\Gamma$ is $g$. Since $OE_g>4(g-1)$, we have $OE_g\le m_g$.

Hence $m_g=OE_g>4(g-1)$. We have proved above that if $|G|>4(g-1)$,
then $m_g$ is realized by an action on an unknotted (resp. knotted) graph if and only if
$OE_g$ is realized by an action on an unknotted (resp. knotted) surface. Therefore we can
copy [WWZZ, Theorem 1.1] as the remaining part of Theorem \ref{mg}.
\end{proof}

%Then $\Gamma/G$ has a regular
%neighborhood $U_\epsilon/G$ which is a handlebody orbifold, and its
%boundary $\partial U_\epsilon/G$ is a 2-suborbifold in $S^3$.

%Since $G$ preserves the orientation of $S^3$, this $G$-action is
%also an extendable action on handlebody $U_\epsilon$ and surface
%$\partial U_\epsilon$, both of which have the same genus as
%$\Gamma$.

\subsection{Edges in orbifolds provide  $(S^3, \Gamma)$  with maximum symmetry}

Let us recall some facts about graph of groups
before further discussion. For more details, see \cite{Se}.
Suppose we have an extendable $G$-action on $\Gamma$ with respect to
an embedding $e: \Gamma\hookrightarrow S^3$. Then we have an
embedding $e/G: \Gamma/G\hookrightarrow S^3/G$. Here $\Gamma/G$ can
be thought as a graph of groups or a `graph orbifold'.

For a vertex $v$ of $\Gamma/G$, let $v'$ be one of its pre-image in
$\Gamma$. Then the \textit{vertex group} $G_v$ can be identified to the
stable subgroup of $v'$. Similarly for an edge $e$ of $\Gamma/G$ we
have the \textit{edge group} $G_e$. The \textit{index} of $v$ (resp. $e$) can be defined
to be $|G_v|$ (resp. $|G_e|$). If $v$ is a vertex of $e$, then there is
a natural injection from $G_e$ to $G_v$. If we forget the groups and
injections, we get the \textit{underlying graph} of $\Gamma/G$, denoted by
$|\Gamma/G|$.

We can define the \textit{Euler characteristic} of $\Gamma/G$ by
$$\chi(\Gamma/G)=\sum 1/|G_v| - \sum 1/|G_e|.  \qquad (3.1)$$
Here the first sum consists of all vertices of $\Gamma/G$, and the
second sum consists of all edges of $\Gamma/G$. If $G$ is trivial, then we get the Euler characteristic $\chi(\Gamma)$ of $\Gamma$.
Generally, by multiplying $|G|$ to the two sides of $(3.1)$ (or directly by just counting the number $a$ of vertices and $b$ of edges of the graph $\Gamma$ whose Euler characteristic is $a-b$,  considering the number of elements in each orbit and the order of a stabilizer of an element in the orbit), we have
$$\chi(\Gamma/G)=\chi(\Gamma)/|G|.\qquad  (3.2)$$

%Removing free edges and extra vertices equivariantly does not affect
%the genus and the isotopy class of $U_\epsilon/G$ or $\partial
%U_\epsilon/G$. Below we will assume $\Gamma$ is minimal.

\begin{lemma}\label{allowable}
Suppose there is an extendable $G$-action on $\Gamma$, $\Gamma$ is
minimal and $|G|>4(g-1)$, then $\Gamma/G$ is an allowable singular
edge/dashed arc.
\end{lemma}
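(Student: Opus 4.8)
The plan is to feed the $G$-action on the minimal graph $\Gamma$ into the surface machinery of \cite{WWZZ} via a thickening, and then read off from Theorem \ref{classify} what $\Gamma/G$ must be. First I would take a small equivariant regular neighborhood $V_g=U_\epsilon(\Gamma)$ as in the proof of Theorem \ref{mg}, so that $V_g$ is a $G$-invariant handlebody of genus $g$ with smooth boundary $\Sigma_g=\partial V_g$; this gives an extendable $G$-action on $(S^3,\Sigma_g)$ with $|G|>4(g-1)$, so the 2-orbifold $\mathcal{F}=\Sigma_g/G\subset\mathcal{O}=S^3/G$ is allowable in the sense of Section \ref{orbifold}. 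By observation (iv) recalled there, $\mathcal{F}$ bounds a handlebody orbifold on the $V_g$ side, namely $V_g/G=N(a)$ for an allowable singular edge or dashed arc $a$ of $\mathcal{O}$, one of those listed in Theorem \ref{classify}; and $\Gamma'=p^{-1}(a)$ (with $p:S^3\to\mathcal{O}$ the branched covering) is a $G$-invariant graph of genus $g$ with $V_g$ as a regular neighborhood, just as in the proof of Theorem \ref{mg}.

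The remaining and essential point is to identify $\Gamma$ itself, not merely some graph carried by $V_g$, with (the preimage of) that allowable edge/arc. Here I would use minimality of $\Gamma$. The handlebody $V_g$ deformation retracts to $\Gamma$, and equivariantly it also deformation retracts to the graph $\Gamma'=p^{-1}(a)$, which is the spine of $N(a)$ pulled back; so $\Gamma$ and $\Gamma'$ are two $G$-invariant spines of the same handlebody $V_g$. Equivalently, downstairs $\Gamma/G$ and $a$ are two spines (graphs of groups) of the handlebody orbifold $V_g/G$. Now $a$, being a single singular edge or a dashed arc of the type in Figure \ref{fig:2}, is already minimal in the graph-of-groups sense: it has no free edge and no extra vertex (in case (a) its two endpoints are genuine trivalent vertices of $\Theta$ with vertex group strictly larger than the edge group; in the dashed-arc case (b) the endpoints lie on edges of $\Theta$ of indices $2$ and $3$, again with jumps in the stabilizer). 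Two minimal spines of a handlebody (orbifold) are related by a sequence of slides and by adding/deleting free edges and extra vertices; since both $\Gamma/G$ and $a$ are minimal, the free-edge/extra-vertex moves are vacuous, and one concludes $\Gamma/G$ is isomorphic as a graph of groups to $a$ (equivalently, $\mathcal{F}=\partial N(a)$ is orbifold-isotopic to the boundary of a regular neighborhood of $\Gamma/G$). Hence $\Gamma/G$ is an allowable singular edge/dashed arc, as claimed.

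I would organize the write-up as: (1) the thickening step, quoting Theorem \ref{mg}'s construction and the allowability list; (2) a lemma that a minimal $G$-invariant graph is, up to equivariant isotopy, the unique minimal equivariant spine of its invariant regular neighborhood, using Proposition \ref{SO(4)} to know the $G$-action on $V_g$ is faithful and the uniqueness of spines of handlebodies under the stated moves; (3) matching $\Gamma/G$ against the spine $a$ of $V_g/G=N(a)$ and invoking minimality of $a$.

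The main obstacle I expect is step (2): controlling \emph{equivariant} spines of a handlebody, i.e. showing that two minimal $G$-invariant spines of $V_g$ are equivariantly equivalent, rather than merely non-equivariantly. One clean route is to work entirely downstairs with the graph-of-groups $V_g/G$: a handlebody orbifold has a well-defined notion of compressing disk system, and any two graph-of-groups spines without free edges or extra vertices are related by orbifold handle slides; since the relevant $N(a)$ here have very small genus quotient (the underlying 2-orbifold $|\mathcal{F}|$ is a sphere with four cone points), there is essentially no room for distinct minimal spines, and the matching is forced by the numerics of $\chi(\Gamma/G)=\chi(\Gamma)/|G|$ in (3.2) together with the index data. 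If a fully general ``uniqueness of minimal equivariant spines'' statement is awkward, the fallback is to note we only need it for the finitely many orbifolds in Theorem \ref{classify}, where it can be checked directly.
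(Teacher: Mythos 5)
Your first step (thickening $\Gamma$ to an invariant handlebody $V_g$, passing to the allowable $2$-orbifold $\mathcal{F}=\partial V_g/G$, and identifying $V_g/G$ with $N(a)$ for some allowable edge/arc $a$ via observation (iv) of Section \ref{orbifold}) is fine, but the step you yourself flag as the obstacle is a genuine gap, and the principle you invoke to close it is false. Two minimal spines of a handlebody (orbifold) need \emph{not} be isomorphic: already for a genus~$2$ handlebody with trivial group, the theta graph and the dumbbell are both spines with no free edges and no degree-$2$ vertices, and a handle slide converts one into the other. So ``slides plus free-edge/extra-vertex moves'' do not preserve the isomorphism type of a minimal spine, and minimality of both $\Gamma/G$ and $a$ does not force $\Gamma/G\cong a$. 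Your conclusion that ``the free-edge/extra-vertex moves are vacuous, hence $\Gamma/G$ is isomorphic as a graph of groups to $a$'' therefore does not follow; a priori $\Gamma/G$ could be a tree with several singular leaves, a tripod, a graph with a loop, etc., all carried by the same $N(a)$.

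What actually rules these out — and what the paper does instead — is a direct Euler characteristic count on the graph of groups $\Gamma/G$, with no detour through the surface case. Using $\chi(\Gamma/G)=\chi(\Gamma)/|G|$ and the fact that each vertex group is a finite subgroup of $SO(3)$ (so that $1-1/|G_v|$ equals half the sum of $1-1/|G_e|$ over the edges of the singular graph $\Theta$ at $v$), one gets $\chi(\Gamma/G)=\chi(|\Gamma/G|)-\tfrac12\sum'(1-1/|G_e|)$, where $\sum'$ runs over edges of $\Theta$ meeting $\Gamma/G$ in a vertex but not contained in it. The hypothesis $|G|>4(g-1)$ gives $-1/4<\chi(\Gamma/G)<0$, each term of $\sum'$ is at least $1/4$, so $|\Gamma/G|$ is a tree and $\sum'$ has at most four terms; minimality (no free edges forces each leaf to contribute at least two terms, no extra vertices forces each interior vertex to contribute at least one) then pins $\Gamma/G$ down to a single edge or dashed arc, whose boundary is allowable. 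This is exactly the ``numerics of $\chi$ together with the index data'' you mention in passing as a fallback; that aside is where the real argument lives, and it needs to be carried out rather than replaced by an appeal to uniqueness of minimal spines.
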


\begin{proof}
By (3.1), it is easy to derive that
$$\chi(\Gamma/G)=\chi(|\Gamma/G|)-\sum\nolimits (1-1/|G_v|) +
\sum\nolimits (1-1/|G_e|).\qquad (3.3)$$
Here the first sum consists of all
vertices of $\Gamma/G$ with indices bigger than 1; The second sum
consists of all edges of $\Gamma/G$ with indices bigger than 1.

Because in our case each $G_v$ is a finite subgroup of $SO(3)$, we
have
$$(1-1/|G_v|) = \sum\nolimits_v(1-1/|G_e|)/2.\qquad (3.4)$$
Here the sum consists of all edges of $\Theta$ containing $v$ as a
vertex. Recall that $\Theta$ is the singular trivalent graph of
$S^3/G$. Then combining (3.4)  with (3.3) we have
$$\chi(\Gamma/G)=\chi(|\Gamma/G|)-\sum\nolimits'(1-1/|G_e|)/2.\qquad (3.5)$$
Here the sum consists of all edges of $\Theta$ which are not edges
of $\Gamma/G$ but contain vertices of $\Gamma/G$.

Since $g\geq 2$, $\chi(\Gamma)=1-g \leq -1$. Then since $|G|>4(g-1)$, applying (3.2) we have $-1/4<\chi(\Gamma/G)<0$. Clearly every
item in $\sum'$ is not smaller than $1/4$. Hence we must have
$\chi(|\Gamma/G|)=1$ and the sum $\sum'$ contains at most $4$
items. Then we know that $|\Gamma/G|$ is a tree. Since $\Gamma$
contains no free edge, every leaf of $\Gamma/G$ gives at least two
items in $\sum'$. Then $\Gamma/G$ has exactly two leaves and all
the other vertices have degree 2. Since $\Gamma$ contains no extra
vertices, every vertex of $\Gamma/G$ other than leaves gives at
least one item in $\sum'$. Hence there is no such vertex in
$\Gamma/G$ and $\Gamma/G$ contains only one edge. Clearly the boundary of its regular neighborhood is allowable, so $\Gamma/G$ is an allowable singular edge
or dashed arc.
\end{proof}

Hence by Theorem \ref{mg} and Lemma \ref{allowable}, to find all
maximum symmetry actions on $S^3$ (leaving some graph invariant), we have only to find all (allowable)
singular edges/dashed arcs in spherical $3$-orbifolds. This is very close to the information given by Theorem \ref{classify} in [WWZZ, Theorem 6.1], but we need to modify the definition of equivalence previously given.

In the graph case,
call two singular edges/dashed arcs \textit{equivalent} if there is an
orbifold automorphism sending one to the other. However, if two (nonequivalent) singular edges/dashed arcs have regular neighborhoods with isotopic boundaries (so that an unknotted $2$-orbifold splits the spherical $3$-orbifold into two handlebody orbifolds), then we say that the edges/arcs are \textit{dual} to each other.

%In Theorem \ref{classify}, if the represented 2-suborbifold is
%unknotted, then the two handlebody orbifolds separated by it are
%always regular neighborhoods of some singular edges/dashed arcs.
%Then we call these two edges/arcs are dual to each other.

In the study of maximum symmetry of $(S^3, \Sigma_g)$, it is
reasonable to call two dual edges equivalent, since they produce
the same allowable 2-orbifold. But it is not good to call them
equivalent in the study of maximum symmetry of $(S^3, \Gamma)$,
since they may be different graph orbifolds or correspond to
different graphs.

Then by a routine checking of Theorem \ref{classify}, as well as
Lemma \ref{allowable}, we have the following Theorem \ref{position}
for our further study of the realizations of the maximum symmetry of
$(S^3, \Gamma)$ in the following sense: (1) we only pick information
from Theorem \ref{classify} related to $m_g$ (so among 40 orbifolds
listed in Theorem \ref{classify}, only 17 orbifolds appear in
Theorem \ref{position});  and list the orbifolds according to the
sizes of $m_g$ (so one figure in Theorem \ref{classify} can become
several figures in Theorem \ref{position}, for example, 15E or 27). (2)
We mark a pair of edges by $a$ and $a'$ and so on if they are dual.

%\begin{corollary}
%As the same assumption in Theorem \ref{allowable}, $\Gamma/G$ is
%labeled in Theorem \ref{classify}, or is dual to a labeled edge, or
%is an allowable dashed arc.
%\end{corollary}

\begin{theorem}\label{position}
%All the minimal MS-graphs are given below.
1. If $g \neq 21, 481$, then $\Gamma$ is a MS graph if and only if
$\Gamma/G\hookrightarrow S^3/G$ belongs to the following table,
labeled by $a, a', b, c, \cdots$  (up to automorphisms of $S^3/G$).

2. When $g=21, 481$, $\Gamma$ is a MS graph if and only if
$\Gamma/G$ is an allowable dashed arc in orbifolds listed in
$|G|=6(g-1)$, and we just give one possible $\Gamma/G$.

%3. If $g \neq 21, 481$, then the table also gives all the possible
%embedding of a minimal MS-graph, up to diffeomorphisms of $S^3$.
\end{theorem}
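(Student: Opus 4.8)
The plan is to deduce Theorem~\ref{position} directly from Theorem~\ref{mg}, Lemma~\ref{allowable}, and a finite inspection of the list in Theorem~\ref{classify}. By Theorem~\ref{mg} we have $m_g = OE_g > 4(g-1)$ for every $g>1$, so any MS-action on a minimal graph $\Gamma$ satisfies the hypothesis $|G|>4(g-1)$ of Lemma~\ref{allowable}. Hence $\Gamma/G \hookrightarrow S^3/G$ is an allowable singular edge or dashed arc, and conversely, given any allowable singular edge/dashed arc $a$ in an orbifold $\mathcal{O}=S^3/G$ from Theorem~\ref{classify}, the preimage $p^{-1}(a)$ is a connected minimal graph with an extendable $G$-action and genus equal to the value of $g$ recorded in the table. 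So the MS-graphs of genus $g$ are exactly the preimages of those allowable edges/arcs $a$ whose recorded genus is $g$ \emph{and} whose group order $|G|$ equals the maximum $m_g$ from Theorem~\ref{mg}.

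The core of the argument is then the ``routine checking'' already alluded to in the text: go through all $40$ orbifolds of Theorem~\ref{classify}, and for each allowable edge/dashed arc compare its pair $(|G|, g)$ against the table of $m_g$ in Theorem~\ref{mg}. First I would discard every edge/arc for which $|G| < m_g$ for its genus $g$; this eliminates $23$ of the $40$ orbifolds entirely and leaves $17$. For the surviving orbifolds I would re-tabulate the data, now organizing by the value of $m_g$ rather than by the Dunbar-style ordering of \cite{Du1}, which is why a single orbifold of Theorem~\ref{classify} (such as 15E or 27) that carries allowable edges of several different genera gets split into several lines of the new table. Finally, for each unknotted 2-suborbifold I would record both complementary handlebody orbifolds as a dual pair of edges/arcs, labelling them $a$ and $a'$: this is genuinely new bookkeeping relative to \cite{WWZZ}, since there two dual edges were identified (they give the same allowable $\Sigma_g/G$) whereas here they may be distinct graph orbifolds or yield distinct spatial graphs, so they must be listed separately. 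For the exceptional genera $g=21,481$, where $m_g = 6(g-1)$ is realized only by a \emph{knotted} dashed arc of type II, Theorem~\ref{classify} only promises the existence of one such arc in each relevant orbifold (those with $|G|=120$ and $|G|=2880$ respectively, i.e. 28, 34, 38), without classifying all of them; so there we record merely one representative $\Gamma/G$ in each case, which accounts for the separate statement~(2).

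The equivalence of the two sides of the ``if and only if'' is then immediate: the forward direction is Lemma~\ref{allowable} combined with the genus/order matching, and the backward direction is the construction $\Gamma = p^{-1}(a)$, which realizes $|G| = m_g$ by the choice of $a$ and realizes genus $g$ because $V_g = N(a)$ pulls back to a regular neighborhood of $\Gamma$, as in the proof of Theorem~\ref{mg}. One subtlety to flag: the notion of equivalence here is coarser in one respect and finer in another than in \cite{WWZZ} — coarser in that we now use only orbifold automorphisms of $S^3/G$ (not orbifold-isotopy of the boundary 2-orbifold), and finer in that dual edges are no longer conflated — so I would state explicitly at the outset of the proof which equivalence relation is in force, exactly as the surrounding text does.

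The step I expect to be the main obstacle is not any single deduction but the exhaustive and error-prone extraction: correctly reading off, for each of the $40$ orbifold diagrams, the list of allowable edges and dashed arcs together with their indices, their knotted/unknotted status, the genus of the preimage, and — crucially — identifying \emph{all} dual partners of the unknotted ones, then verifying against Theorem~\ref{mg} that no allowable edge with $|G|=m_g$ has been overlooked and none with $|G|<m_g$ has been wrongly retained. In particular one must double-check the split cases (15E giving an infinite family $g=n-1$, and the interaction between 15E/19/27 etc.) and make sure the $17$-orbifold sublist is complete and correctly reorganized by the size of $m_g$. This is where a careful case table, cross-referenced with Theorem~\ref{mg}, is indispensable, and it is essentially the content of the table displayed in the statement of Theorem~\ref{position}.
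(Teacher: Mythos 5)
Your proposal follows essentially the same route as the paper: combine Theorem~\ref{mg} with Lemma~\ref{allowable} to reduce to allowable singular edges/dashed arcs, then perform the routine extraction from the $40$ orbifolds of Theorem~\ref{classify} keeping only the $17$ with $|G|=m_g$, reorganized by the size of $m_g$, with dual edges recorded separately as $a,a'$ and the type~II arcs of $g=21,481$ handled by a single representative since Theorem~\ref{classify} does not classify them. The only quibble is terminological: dropping orbifold-isotopy of boundary $2$-orbifolds from the equivalence relation makes it \emph{finer}, not coarser, which is precisely why dual edges are no longer conflated.
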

\begin{table}[h]
\caption{Allowable edges/arcs corresponding to
MS graphs}\label{tab:allow}
\end{table}
\centerline{$|G|=12(g-1)$}
\begin{center}
\scalebox{0.4}{\includegraphics*[0pt,0pt][152pt,152pt]{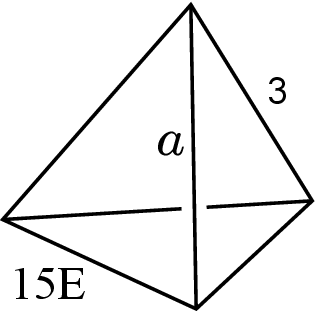}}
\hspace{10pt}\raisebox{35pt} {\parbox[t]{102pt}{$|G|=12$\\$g=2$}}
\scalebox{0.4}{\includegraphics*[0pt,0pt][152pt,152pt]{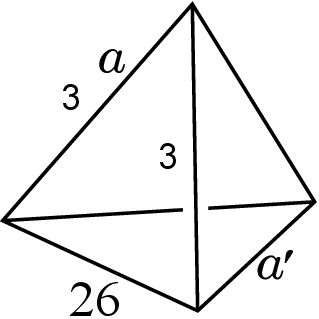}}
\hspace{10pt}\raisebox{35pt} {\parbox[t]{102pt}{$|G|=24$\\$g=3$}}
\end{center}

\begin{center}
\scalebox{0.4}{\includegraphics*[0pt,0pt][152pt,152pt]{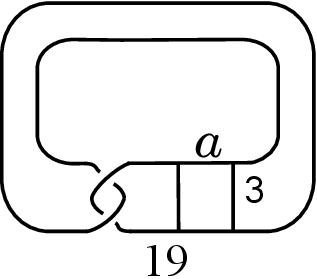}}
\hspace{10pt}\raisebox{35pt} {\parbox[t]{102pt}{$|G|=36$\\$g=4$}}
\scalebox{0.4}{\includegraphics*[0pt,0pt][152pt,152pt]{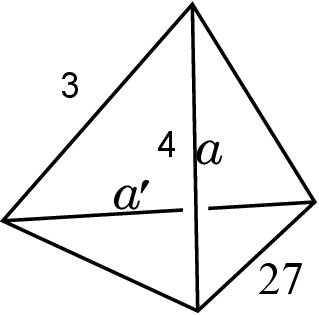}}
\hspace{10pt}\raisebox{35pt} {\parbox[t]{102pt}{$|G|=48$\\$g=5$}}
\end{center}

\begin{center}
\scalebox{0.4}{\includegraphics*[0pt,0pt][152pt,152pt]{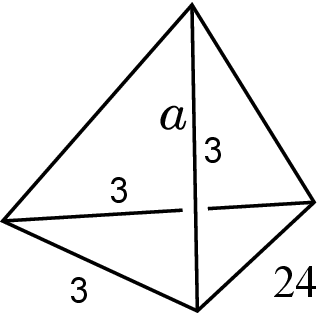}}
\hspace{10pt}\raisebox{35pt} {\parbox[t]{102pt}{$|G|=60$\\$g=6$}}
\scalebox{0.4}{\includegraphics*[0pt,0pt][152pt,152pt]{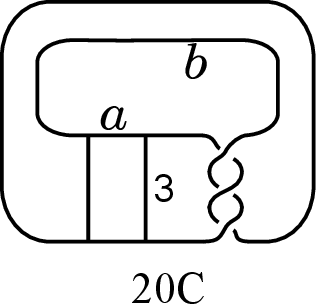}}
\hspace{10pt}\raisebox{35pt} {\parbox[t]{102pt}{$|G|=96$\\$g=9$}}
\end{center}

\begin{center}
\scalebox{0.4}{\includegraphics*[0pt,0pt][152pt,152pt]{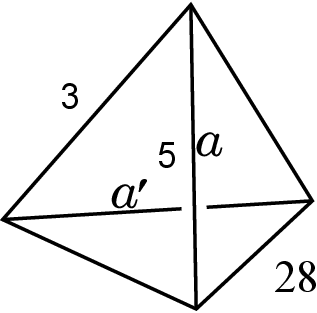}}
\hspace{10pt}\raisebox{35pt} {\parbox[t]{102pt}{$|G|=120$\\$g=11$}}
\scalebox{0.4}{\includegraphics*[0pt,0pt][152pt,152pt]{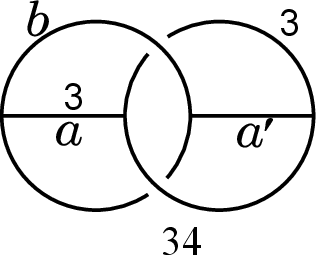}}
\hspace{10pt}\raisebox{35pt} {\parbox[t]{102pt}{$|G|=120$\\$g=11$}}
\end{center}

\begin{center}
\scalebox{0.4}{\includegraphics*[0pt,0pt][152pt,152pt]{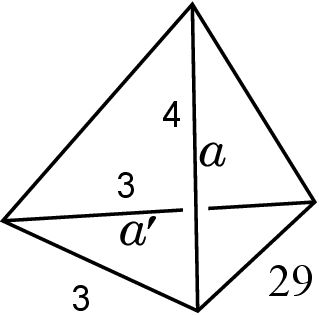}}
\hspace{10pt}\raisebox{35pt} {\parbox[t]{102pt}{$|G|=192$\\$g=17$}}
\scalebox{0.4}{\includegraphics*[0pt,0pt][152pt,152pt]{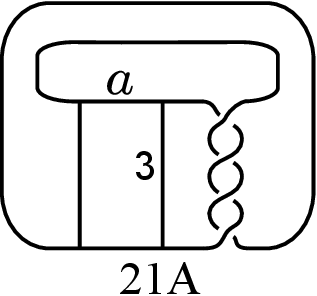}}
\hspace{10pt}\raisebox{35pt} {\parbox[t]{102pt}{$|G|=288$\\$g=25$}}
\end{center}

\begin{center}
\scalebox{0.4}{\includegraphics*[0pt,0pt][152pt,152pt]{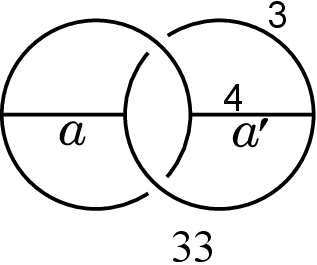}}
\hspace{10pt}\raisebox{35pt} {\parbox[t]{102pt}{$|G|=1152$\\$g=97$}}
\scalebox{0.4}{\includegraphics*[0pt,0pt][152pt,152pt]{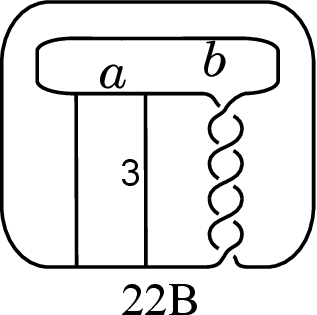}}
\hspace{10pt}\raisebox{35pt}
{\parbox[t]{102pt}{$|G|=1440$\\$g=121$}}
\end{center}

\begin{center}
\scalebox{0.4}{\includegraphics*[0pt,0pt][152pt,152pt]{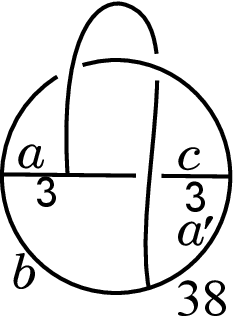}}
\hspace{10pt}\raisebox{35pt}
{\parbox[t]{102pt}{$|G|=2880$\\$g=241$}}
\scalebox{0.4}{\includegraphics*[0pt,0pt][152pt,152pt]{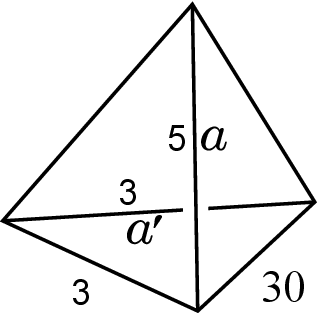}}
\hspace{10pt}\raisebox{35pt}
{\parbox[t]{102pt}{$|G|=7200$\\$g=601$}}
\end{center}

\centerline{$|G|=8(g-1)$}
\begin{center}
\scalebox{0.4}{\includegraphics*[0pt,0pt][152pt,152pt]{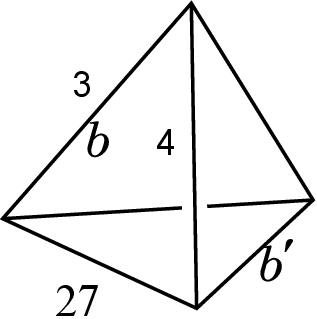}}
\hspace{10pt}\raisebox{35pt} {\parbox[t]{102pt}{$|G|=48$\\$g=7$}}
\scalebox{0.4}{\includegraphics*[0pt,0pt][152pt,152pt]{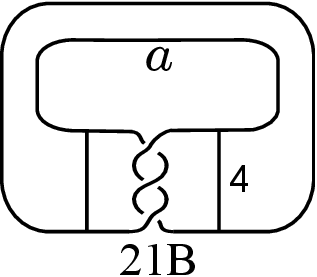}}
\hspace{10pt}\raisebox{35pt} {\parbox[t]{102pt}{$|G|=384$\\$g=49$}}
\end{center}

\begin{center}
\scalebox{0.4}{\includegraphics*[0pt,0pt][152pt,152pt]{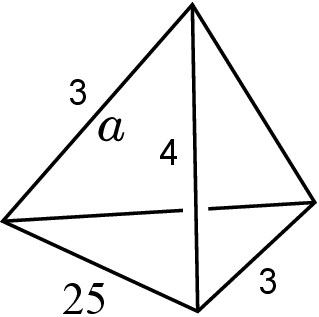}}
\hspace{10pt}\raisebox{35pt}
{\parbox[t]{102pt}{$|G|=576$\\$g=73$}}\hspace*{180pt}
\end{center}

\centerline{$|G|=20(g-1)/3$}
\begin{center}
\scalebox{0.4}{\includegraphics*[0pt,0pt][152pt,152pt]{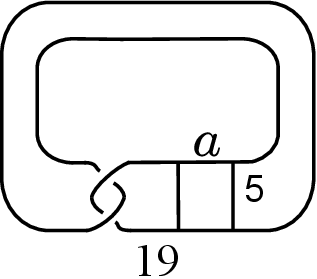}}
\hspace{10pt}\raisebox{35pt} {\parbox[t]{102pt}{$|G|=100$\\$g=16$}}
\scalebox{0.4}{\includegraphics*[0pt,0pt][152pt,152pt]{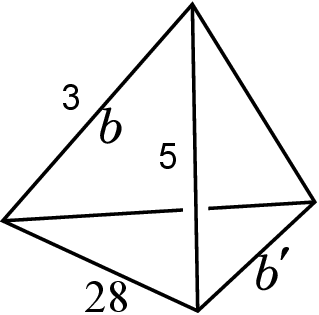}}
\hspace{10pt}\raisebox{35pt} {\parbox[t]{102pt}{$|G|=120$\\$g=19$}}
\end{center}

\begin{center}
\scalebox{0.4}{\includegraphics*[0pt,0pt][152pt,152pt]{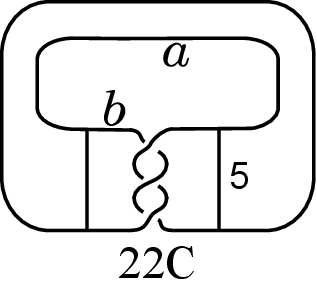}}
\hspace{10pt}\raisebox{35pt}
{\parbox[t]{102pt}{$|G|=2400$\\$g=361$}}\hspace*{180pt}
\end{center}

\centerline{$|G|=6(g-1)$}
\begin{center}
\scalebox{0.4}{\includegraphics*[0pt,0pt][152pt,152pt]{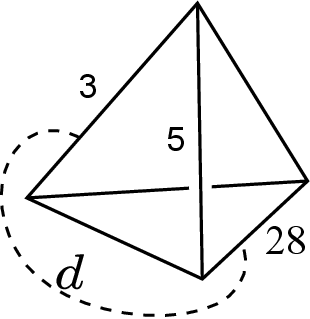}}
\hspace{10pt}\raisebox{35pt} {\parbox[t]{102pt}{$|G|=120$\\$g=21$}}
\scalebox{0.4}{\includegraphics*[0pt,0pt][152pt,152pt]{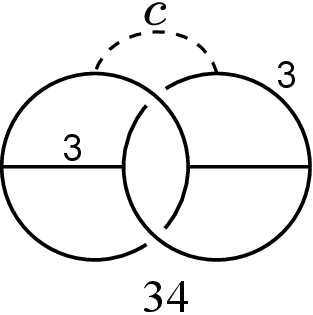}}
\hspace{10pt}\raisebox{35pt} {\parbox[t]{102pt}{$|G|=120$\\$g=21$}}
\end{center}

\begin{center}
\scalebox{0.4}{\includegraphics*[0pt,0pt][152pt,152pt]{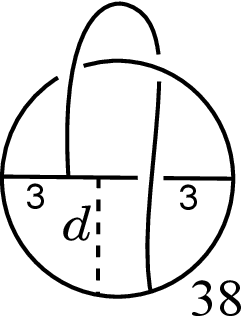}}
\hspace{10pt}\raisebox{35pt}
{\parbox[t]{102pt}{$|G|=2880$\\$g=481$}}\hspace*{180pt}
\end{center}

\centerline{$|G|=24(g-1)/5$ and $30(g-1)/7$}
\begin{center}
\scalebox{0.4}{\includegraphics*[0pt,0pt][152pt,152pt]{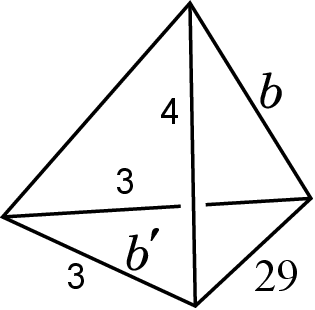}}
\hspace{10pt}\raisebox{35pt} {\parbox[t]{102pt}{$|G|=192$\\$g=41$}}
\scalebox{0.4}{\includegraphics*[0pt,0pt][152pt,152pt]{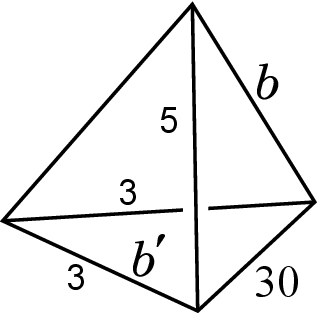}}
\hspace{10pt}\raisebox{35pt}
{\parbox[t]{102pt}{$|G|=7200$\\$g=1681$}}
\end{center}

\centerline{$|G|=4(\sqrt{g}+1)^2$, $g=k^2, k\neq3, 5, 7, 11, 19,
41$}
\begin{center}
\scalebox{0.4}{\includegraphics*[0pt,0pt][152pt,152pt]{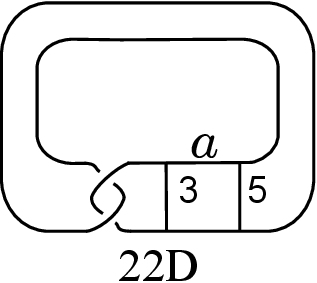}}
\hspace{10pt}\raisebox{35pt}
{\parbox[t]{102pt}{$|G|=3600$\\$g=841$}}
\scalebox{0.4}{\includegraphics*[0pt,0pt][152pt,152pt]{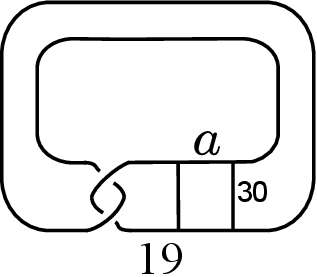}}
\hspace{10pt}\raisebox{35pt}
{\parbox[t]{102pt}{$|G|=3600$\\$g=841$}}
\end{center}

\begin{center}
\scalebox{0.4}{\includegraphics*[0pt,0pt][152pt,152pt]{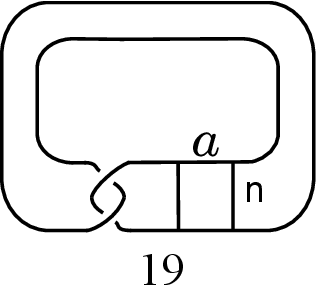}}
\hspace{10pt}\raisebox{35pt}
{\parbox[t]{102pt}{Others\\$g=(n-1)^2$}}\hspace*{180pt}
\end{center}

\centerline{$|G|=4(g+1)$, $g$ belongs to remaining numbers}
\begin{center}
\scalebox{0.4}{\includegraphics*[0pt,0pt][152pt,152pt]{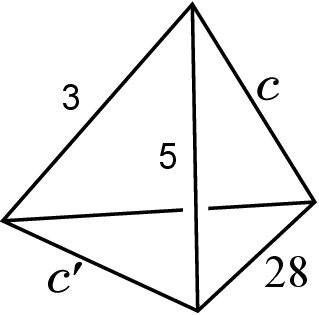}}
\hspace{10pt}\raisebox{35pt} {\parbox[t]{102pt}{$|G|=120$\\$g=29$}}
\scalebox{0.4}{\includegraphics*[0pt,0pt][152pt,152pt]{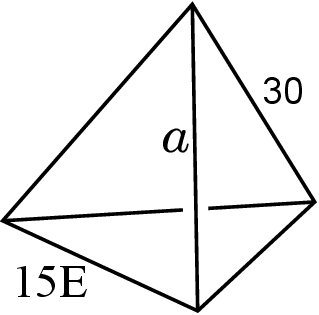}}
\hspace{10pt}\raisebox{35pt} {\parbox[t]{102pt}{$|G|=120$\\$g=29$}}
\end{center}

\begin{center}
\scalebox{0.4}{\includegraphics*[0pt,0pt][152pt,152pt]{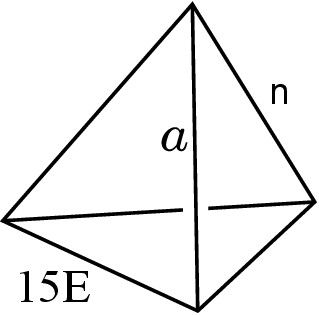}}
\hspace{10pt}\raisebox{35pt}
{\parbox[t]{102pt}{Others\\$g=n-1$}}\hspace*{180pt}
\end{center}

\begin{remark}
1. Theorem \ref{mg} is also valid for the handlebody case, just
considering the handlebody $U_\epsilon(\Gamma)$ and handlebody
orbifold $U_\epsilon(\Gamma)/G$ defined in the proof of Theorem
\ref{mg}.

2. In either the graph case (for actions on $S^3$ leaving an embedded graph invariant) or the handlebody case (for actions on $S^3$ leaving an embedded handlebody invariant), we can also consider the maximum order problem for all the unknotted (resp. all the knotted) embeddings. The results will be completely the same as in the
surface case; see Theorem 1.2 (resp. Theorem 1.3) of \cite{WWZZ}.
\end{remark}

\section {Abstract and spatial MS graphs}
To give the detailed classification of MS graphs, MS spatial graphs
and the group actions, we will use some notations: let
$\mathcal{O}_{N}$ denote the number ``$N$'' orbifold in Theorem
\ref{position}. If $\gamma$ denotes an allowable singular
edge/dashed arc in $\mathcal{O}_N$, let $\Gamma_N^\gamma$ denote the
($G$-invariant MS) graph in $S^3$ which is the pre-image of $\gamma$ in $S^3$. We often put more information on $\Gamma_N^\gamma$,
writing $\Gamma_N^\gamma(g)$ if $\Gamma_N^\gamma$ has genus $g$,
or $\Gamma_N^\gamma(g, k)$ if $\Gamma_N^\gamma$ is also knotted.

\subsection{To picture  and to identify/distinguish  MS graphs}\label{pictureG}

This subsection serves as preparation for the proof of our detailed classification in the next subsection.

\noindent {\bf To picture  MS graphs:} Suppose there is an
extendable $G$-action on $\Gamma$, such that $\Gamma/G$ is a
singular edge/dashed arc in $S^3/G$. Then $\pi_1(\Gamma/G)\cong
\pi(U_\epsilon/G)$ and the pre-image of $U_\epsilon/G$ in $S^3$ is
connected. Hence the embedding $U_\epsilon/G\hookrightarrow S^3/G$
induces a surjection on fundamental groups, see \cite[Corollary 2.12]{WWZZ}. And we
have a representation $\phi: \pi_1(\Gamma/G)\twoheadrightarrow G$.
Notice that $\phi$ can be given by Wirtinger presentations, see
\cite[Proposition 2.8]{BMP} or \cite[Example 6.3]{WWZZ}.

\noindent{\bf Claim.} {\it $\Gamma$ can be reconstructed from $\phi$
as following.}

Give $\Gamma/G$ an orientation; denote the start point by $A$, the
terminal point by $B$ and the edge by $e$. Denote the corresponding
vertex groups and edge group by $G_A$, $G_B$ and $G_e$. Then
$\pi_1(\Gamma/G)\cong G_A\ast_{G_e} G_B$, and $\phi$ is injective on
$G_A$, $G_B$ and $G_e$. Hence $\phi(G_A)\cong G_A$, $\phi(G_B)\cong
G_B$, $\phi(G_e)\cong G_e$, and $\phi(G_e)$ lies in the intersection
of $\phi(G_A)$ and $\phi(G_B)$.

Let $\mathcal{S}$ denote $\{\epsilon_g := [a_g, b_g] \mid g \in G\}$, a set of oriented edges indexed by elements of $G$. Then $\phi(G_A)$
(resp. $\phi(G_B)$, $\phi(G_e)$) acts on $\mathcal{S}$ by left
multiplication of subscripts. Start points are identified if they are in the same $G_A$-orbit (resp. terminal points in the same $G_B$-orbit, edges in the same $G_e$-orbit). Since $\phi$ is surjective, we get a connected graph which is the
`covering' of $\Gamma/G$. On it there is a $G$-action via the right
multiplication.

We can use the above construction to write a computer program for
\cite{GAP} and \cite{Mathematica}. Then the graph can be pictured.

\begin{example}\label{graphdraw}
Using computer to picture $\Gamma^{a'}_{34}(11)$.

A presentation of $\mathcal{O}_{34}$ is given in the proof of Theorem  \ref{detailed-classification}.
Firstly run the following codes in \cite{GAP},  we can get
a list of arrows. Odd numbers correspond to the start points and
even numbers correspond to the terminal points.

\begin{lstlisting}
 f:=FreeGroup("x","y","z");
 x:=f.1;
 y:=f.2;
 z:=f.3;
 G:=f/[x^2, y^3, z^2, (z*y)^2, (y*x*z)^2, (y*x*z*x)^3];
 x:=G.1; # now x is an element in G
 y:=G.2;
 z:=G.3;
 GA:=GroupWithGenerators([x,z*y]); # vertex group
 GB:=GroupWithGenerators([z*y,y]);
 Ge:=GroupWithGenerators([z*y]);   # edge group
 l:=RightCosets(G,GA); # right cosets of vertex groups
 r:=RightCosets(G,GB);
 for i in [1..Size(l)]
 do for j in [1..Size(r)]
   do if (Size(Intersection2(l[i],r[j]))<>0) then
     for k in [1..Size(Intersection2(l[i],r[j]))/Size(Ge)]
     do Print("(",2*i-1,"->",2*j,"),");
     od;       # the intersection of a coset pair is
     fi;       # the union of cosets of the edge group
   od;
 od;
 Print("\n");
\end{lstlisting}

Then copy the list to \cite{Mathematica}, and run the following
codes.

\begin{lstlisting}
 <<Combinatorica`
 Needs["GraphUtilities`"]
 O34a'={(1->2),(1->4),(3->2),(3->6),(5->2),(5->8),(7->4),
  (7->10),(9->4),(9->12),(11->6),(11->14),(13->8),(13->16),
  (15->6),(15->18),(17->8),(17->20),(19->10),(19->22),
  (21->12),(21->24),(23->10),(23->26),(25->12),(25->28),
  (27->14),(27->30),(29->16),(29->32),(31->18),(31->28),
  (33->20),(33->26),(35->14),(35->22),(37->16),(37->24),
  (39->18),(39->32),(41->20),(41->30),(43->22),(43->34),
  (45->24),(45->36),(47->26),(47->36),(49->28),(49->34),
  (51->30),(51->38),(53->32),(53->38),(55->34),(55->40),
  (57->36),(57->40),(59->38),(59->40)}
 G34a'=SetGraphOptions[ToCombinatoricaGraph[O34a'],
   EdgeDirection->False]
 GraphPlot[G34a']
\end{lstlisting}

Finally we will get the picture as in Figure \ref{fig:Gr11d}.

\begin{figure}[h]
\centerline{\scalebox{0.6}{\includegraphics{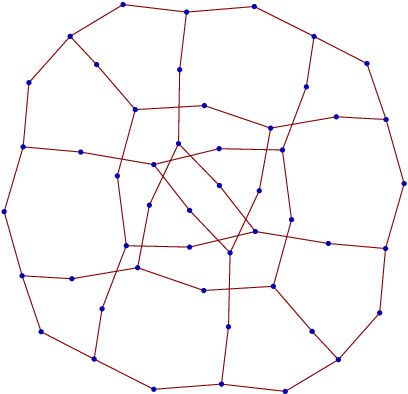}}}
\caption{The MS graph $\Gamma^{a'}_{34}(11)$}\label{fig:Gr11d}
\end{figure}
\end{example}

\noindent {\bf To identify/distinguish  MS graphs:}

A main  method to identify graphs is the following.

Suppose that $\Gamma_i$ are the
($G_i$-invariant MS) graphs in $S^3$, and representations $\phi_i: \pi_1(\Gamma_i/G_i)\cong G_{iA_i}\ast_{G_{ie_i}} G_{iB_i}\to G_i$ are induced by the orbifold embeddings, $i=1,2$.

$$\xymatrix{
  G_{1A_1}\ast_{G_{1e_1} }G_{1B_1} \ar[d]_{\eta} \ar[r]^{\quad\quad\phi_1} & G_1 \ar[d]^{\psi} \ar[r]
  & 1 \\
  G_{2A_2}\ast_{G_{2e_2}} G_{2B_2}  \ar[r]^{\quad\quad\phi_2} & G_2 \ar[r] & 1}
$$

If we have two isomorphisms $\eta$ and $\psi$ in the diagram above to make it
commutative, where $\eta$ maps $G_{1A_1}$ to $G_{2A_2}$,
$G_{1B_1}$ to $G_{2B_2}$, and $G_{1e_1}$ to $G_{2e_2}$,
then clearly $\Gamma_1$ and  $\Gamma_2$ are $G$-equivariant (abstract) graphs.

In practice, we will first give the presentations of the groups and then give the  map between group generators and check if the map gives us a required group isomorphism.

%And we say the
%two representations $\phi_1$ and $\phi_2$ are equivalent.

%To make sure two graphs are G-equivalently isomorphic, we prove the
%two representations induced by the orbifold  embedding $\gamma\subset S^3/G$ are equivalent.

We distinguish graphs by computing the graph invariants such as the
number of vertices or edges, the degree of a vertex, the diameter of
a graph and the girth (the length of a minimal loop) of a graph.

For complicated  cases, we  need computer programs to determine if
maps are isomorphisms and to compute  graph  invariants.

\begin{example}
The map $(\overline{u},\overline{u}_l,\overline{u}_r)\mapsto
(\overline{v},\overline{v}_l,\overline{v}_r)$ in the proof of
Theorem \ref{detailed-classification} is an isomorphism from
$\pi_1(\mathcal{O}_{28})$ to $\pi_1(\mathcal{O}_{34})$.

\begin{lstlisting}
 f:=FreeGroup("x","y","z");
 x:=f.1;
 y:=f.2;
 z:=f.3;
 O28:=f/[x^5, y^2, z^2, (x*z)^3, (x*y)^2, (y*z^(-1))^2];
 O34:=f/[x^2, y^3, z^2, (z*y)^2, (y*x*z)^2, (y*x*z*x)^3];
 x:=O28.1; # x is an element in O28
 y:=O28.2;
 z:=O28.3;
 r:=O34.1; # r is an element in O34
 s:=O34.2;
 t:=O34.3;
 iso28:=IsomorphismPermGroup(O28); # pass to permutation group
 iso34:=IsomorphismPermGroup(O34);
 G28:=Image(iso28);
 G34:=Image(iso34);
 u:=Image(iso28,x*y*z^(-1)*x^(-1));
 ul:=Image(iso28,x*z*x^(-1));
 ur:=Image(iso28,x*z);
 v:=Image(iso34,r^(-1));
 vl:=Image(iso34,s^(-1)*t^(-1));
 vr:=Image(iso34,t^(-1)r^(-1)s^(-1)r);
 GroupHomomorphismByImages(G28,G34,[u,ul,ur],[v,vl,vr]);
\end{lstlisting}

Run the above codes in \cite{GAP}. If it is an isomorphism, then
\cite{GAP} will give the correspondence between elements of the two
groups. Otherwise the output will be ``fail". In this example the
map is an isomorphism.
\end{example}

\begin{example}\label{girth}
Compute the diameter and girth of $\Gamma^{a'}_{34}(11)$.

\begin{lstlisting}
 Diameter[G34a']
 Girth[G34a']
\end{lstlisting}

In Example \ref{graphdraw}, after input the list into
\cite{Mathematica}, we just add the above two sentences. Then the
computer will show that the diameter is $10$ and the girth is $12$.
\end{example}

\subsection{Detailed classifications}

\begin{theorem}\label{detailed-classification}
(1) For each $g$, the MS graphs and their spatial graphs are:
\begin{itemize}

\item  $g = 11$.
Four MS graphs, three with a unique spatial graph:
$\Gamma_{28}^a({11})$, $\Gamma_{34}^a(11)$, $\Gamma_{34}^{a'}(11)$;
one with two spatial graphs $\Gamma_{28}^{a'}(11)$ and
$\Gamma_{34}^b(11,k)$;

\item $g = 241$.
Four MS graphs, each with a unique spatial graph: $\Gamma_{38}^a(241)$,
$\Gamma_{38}^{a'}(241)$, $\Gamma_{38}^b(241, k)$,
$\Gamma_{38}^c(241, k)$;

\item $g = 3,5,7,17,19,29,41,97,601, 1681$. For each genus, two MS graphs, each with a unique (unknotted) spatial graph. The two graphs come from pairs of dual edges in the orbifolds listed in Theorem \ref{position};

\item $g = 841$.
Two MS graphs, each with a unique spatial graph: $\Gamma_{22D}^a(841)$,
$\Gamma_{19}^a(841)$;% for $N=30$;

\item  $g = 9, 121, 361$. For each genus, one MS
graph, each with two spatial graphs:
$\Gamma_{20C}^a(9)$ and  $\Gamma_{20C}^b(9, k)$,
$\Gamma_{22B}^a(121)$ and  $\Gamma_{22B}^b(121, k)$,
$\Gamma_{22C}^a(361)$ and  $\Gamma_{22C}^b(361, k)$;

\item  $g = 21, 481$. For each genus, one MS graph, each with infinitely many different knotted spatial graphs: $\Gamma_{28}^d(21, k) \cong
\Gamma_{34}^c(21, k)$, $\Gamma_{38}^d(481, k)$;

\item For all other values of $g$, there is just one MS graph with a unique
(unknotted) spatial graph.
\end{itemize}

(2)  For each spatial MS graph $\Gamma$, there
is a unique MS action. This is also true for each (abstract) MS graph $\Gamma$ except $\Gamma_{28}^c(29)\cong\Gamma_{15E}^a(29)$,
where the actions correspond to
$\Gamma_{28}^c(29)$ and $\Gamma_{15E}^a(29)$ are different.
\end{theorem}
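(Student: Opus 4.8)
The plan is to work through the seventeen orbifolds in Theorem \ref{position} one genus at a time, in the order in which they are grouped by the size of $m_g$. For each orbifold $\mathcal{O}_N$ in Theorem \ref{position} I would first fix a concrete presentation of $G=\pi_1(\mathcal{O}_N)$ by a Wirtinger-type presentation read off from the labelled singular graph (the presentations used in the examples, e.g. $\pi_1(\mathcal{O}_{28})=\langle x,y,z\mid x^5,y^2,z^2,(xz)^3,(xy)^2,(yz^{-1})^2\rangle$ and $\pi_1(\mathcal{O}_{34})=\langle x,y,z\mid x^2,y^3,z^2,(zy)^2,(yxz)^2,(yxzx)^3\rangle$, are the template). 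Then for each allowable singular edge or dashed arc $\gamma$ I identify the vertex groups $G_A,G_B\subset SO(3)$ and the edge group $G_e=G_A\cap G_B$ as explicit subgroups generated by named words, so that $\pi_1(\gamma)=G_A\ast_{G_e}G_B$ and the induced surjection $\phi\colon\pi_1(\gamma)\twoheadrightarrow G$ is completely explicit. This is exactly the data needed to reconstruct the abstract graph $\Gamma_N^\gamma$ by the coset construction in the Claim of Section 4.1, and to decide spatial equivalence.

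The three assertions to be checked for each genus are: (i) the count of abstract MS graphs; (ii) for each abstract MS graph, the count of spatial MS graphs; (iii) uniqueness of the MS action. For (i) I would pair up the graphs $\Gamma_N^\gamma$ arising at a given genus and test, for each pair, whether there is a commutative diagram of the form displayed before Theorem \ref{detailed-classification}, i.e. an isomorphism $\psi\colon G_1\to G_2$ and an isomorphism $\eta$ of the amalgamated products carrying $(G_{A_1},G_{B_1},G_{e_1})$ to $(G_{A_2},G_{B_2},G_{e_2})$ and making $\psi\phi_1=\phi_2\eta$; if such exists the two abstract graphs coincide, otherwise I distinguish them by computing graph invariants (number of vertices/edges, degree sequence, girth, diameter) of the covering graphs, invoking \cite{GAP}/\cite{Mathematica} for the large cases as in Examples \ref{graphdraw} and \ref{girth}. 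For (ii): a spatial graph $(S^3,\Gamma_N^\gamma)$ is unknotted precisely when $\gamma$ is unknotted in Theorem \ref{position}, in which case the dual edge $\gamma'$ gives the \emph{same} spatial graph (this is what ``dual'' means here and is why, e.g., the dual pairs at $g=3,5,7,\dots$ yield two abstract graphs but still a unique \emph{unknotted} spatial graph each — I must be careful: abstract identification and spatial identification are different questions, and at $g=9,121,361$ one gets one abstract graph with two spatial realizations because the two allowable edges $a$ and $b_k$ in $\mathcal{O}_{20C},\mathcal{O}_{22B},\mathcal{O}_{22C}$ are abstractly isomorphic but one is unknotted and one knotted). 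The genera $21,481$ are the genuinely non-rigid case: the dashed arc of type II is knotted, and the regular neighborhood of $\Gamma$ and its complement can be a pair of knotted handlebodies with infinitely many distinct embeddings, so there I only claim one abstract graph with infinitely many spatial graphs, and the isomorphism $\Gamma_{28}^d(21,k)\cong\Gamma_{34}^c(21,k)$ is again checked by a commutative diagram. For (iii): given a fixed (spatial or abstract) MS graph, an MS action is a surjection $\pi_1(\Gamma/G)\twoheadrightarrow G$ with the prescribed vertex/edge groups; two such are the ``same'' action iff they differ by an automorphism of $G$ and an automorphism of the graph of groups; I enumerate these using the automorphism groups of the $SO(3)$-subgroups and of $G$, and verify uniqueness in all cases except $\Gamma_{28}^c(29)\cong\Gamma_{15E}^a(29)$, where I exhibit two inequivalent surjections.

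The main obstacle is the bookkeeping in case (i) for the large orbifolds — $g=97,121,241,601,841,1681$ — where the groups have order in the hundreds or thousands and deciding whether a candidate generator-to-generator map extends to an isomorphism making the square commute, and then comparing girth/diameter of graphs with hundreds of vertices, is not feasible by hand; here the proof genuinely rests on the \cite{GAP} and \cite{Mathematica} computations, and the write-up will consist of stating the explicit presentations, the explicit candidate maps $\eta,\psi$ (generator images), and the resulting invariant values, with the routine verification delegated to the code in the examples and Appendices \ref{appendixA} and \ref{appendixB}. The conceptual subtleties — distinguishing the abstract-graph question from the spatial-graph question, and recognizing that a dual pair of edges gives two abstract graphs but one unknotted spatial graph — are where care is needed, but they are handled uniformly by the framework of Section 4.1; the only irreducibly case-by-case phenomena are the exceptional genera $9,11,21,29,121,241,361,481$ singled out in the statement, each of which I would treat in its own short paragraph.
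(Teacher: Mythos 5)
Your framework---explicit Wirtinger presentations of $\pi_1(\mathcal{O}_N)$, the coset reconstruction of $\Gamma_N^\gamma$, commutative diagrams of amalgamated products $G_A\ast_{G_e}G_B$ to identify graphs, and the invariants $d_k,E,D,\mathcal{G}$ computed by GAP/Mathematica to distinguish them---is exactly the paper's strategy, and for every genus except $21$ and $481$ your plan would go through. But at $g=21,481$ there is a genuine gap. There the MS graphs are preimages of \emph{knotted dashed arcs} of type II, of which there are infinitely many inequivalent ones inside each of $\mathcal{O}_{28}$, $\mathcal{O}_{34}$ and $\mathcal{O}_{38}$. Checking one commutative diagram between one arc in $\mathcal{O}_{28}$ and one arc in $\mathcal{O}_{34}$ identifies those two particular graphs, but it does not show that \emph{all} the infinitely many spatial graphs share a single underlying abstract graph, which is what ``one MS graph with infinitely many spatial graphs'' asserts (and what is needed even to claim that $\Lambda$ is constant over all knotted arcs---no finite invariant computation can verify that). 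The paper closes this with a separate uniform statement: a dashed arc has $\pi_1\cong\Z_2*\Z_3$, so the induced representation is determined by a generating pair $(x,y)$ of $G$ with $x^2=y^3=1$, and for the three relevant groups $I\times\mathbb{Z}_2$, $A_5\times\mathbb{Z}_2$ and $\mathbf{I\times O}$ any two such generating pairs are carried to one another by an automorphism of $G$ (Lemma \ref{same}). You need this lemma, or something equivalent, to complete the $g=21,481$ bullet.

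A second, smaller but real error: your parenthetical claim that an unknotted edge $\gamma$ and its dual $\gamma'$ ``give the same spatial graph'' is false, and it is precisely the trap Section 3.2 warns against. The graphs $p^{-1}(\gamma)$ and $p^{-1}(\gamma')$ are the cores of the two complementary handlebodies bounded by the allowable $2$-suborbifold; they are disjoint and in general not even abstractly isomorphic (at $g=3$, for instance, one has $8$ edges and the other $12$). Dual edges produce the same allowable $2$-orbifold, not the same graph---that is exactly why the dual pairs at $g=3,5,7,\dots$ each contribute \emph{two} abstract MS graphs, each with its own unique unknotted spatial realization. You arrive at the correct count, but the stated justification, taken literally, would collapse each dual pair to a single graph and give the wrong answer for the third bullet.
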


\begin{proof}%[Proof of Theorem \ref{detailed-classification}]
For each minimal graph $\Gamma$, denote the set of  invariants by
$$\Lambda(\Gamma)=\{ d_k(\Gamma),   E(\Gamma), D(\Gamma), \mathcal{G}(\Gamma)\},$$
where  $d_k$, $E$,  $D$  and $\mathcal{G}$ denote the number of
vertices of degree $k$ ($k=2,3,...$),
 the number of edges,  the diameter,  and the girth of $\Gamma$ respectively.

For a given $g$,  $\gamma$ in $\mathcal{O}_N$ realizing $m_g$ are
listed in Theorem 3.3, and $\Gamma_N^\gamma(g)$ are listed in
Theorem \ref{detailed-classification}. We will be able to picture
all those  graphs $\Gamma_N^\gamma(g)$ and to compute their
invariants  $\Lambda$ by the methods in last  subsection, which are
shown as in Appendix \ref{appendixA} (the pictures of $\Gamma$ with $E(\Gamma)>150$ are not given). As a result, graphs in the
discussion with the same invariants $\Lambda$ are listed as below:

(i) $\Lambda(\Gamma_{28}^c(29))=\Lambda(\Gamma_{15E}^a(29))$,

(ii) $\Lambda(\Gamma_{28}^{a'}(11))=\Lambda(\Gamma_{34}^b(11,k))$,

(iii) $\Lambda(\Gamma_{20C}^a(9))=\Lambda(\Gamma_{20C}^b(9, k))$,

(iv) $\Lambda(\Gamma_{22B}^a(121))=\Lambda(\Gamma_{22B}^b(121, k))$,

(v) $\Lambda(\Gamma_{22C}^a(361))=\Lambda(\Gamma_{22C}^b(361, k))$,

(vi) $\Lambda(\Gamma_{28}^d(21, k))=\Lambda(\Gamma_{34}^c(21, k))$ for any knotted dashed arcs $ d$ and $c$,

(vii) $\Lambda(\Gamma_{38}^d(481, k))$ is constant for all knotted dashed arcs $d$.

Therefore for any genus which does not appear in the list (i)-(vii), Theorem \ref{detailed-classification} is proved, and for the remaining genera, we only need to consider the graphs listed in (i)-(vii).

At what follows, we use $\mathbb{Z}_n$ to denote the cyclic group with order $n$, use $D_n$ to denote the dihedral group with order $2n$, use $S_n$ to denote the permutation group with order $n!$, and use $A_n$ to denote the alternating group with order $n!/2$.

It is clear that $\Gamma_{28}^c(29)$ and $\Gamma_{15E}^a(29)$ are the same as abstract graphs (both are isomorphic to $K_{(2, 30)}$, the complete bipartite graph with vertices partitioned into two subsets of cardinality 2 and 30),
but their corresponding group actions are different: one group is isomorphic to
$A_5\times
\mathbb{Z}_2$ and the other is isomorphic to $D_{30}\times \mathbb{Z}_2$ . There is no diffeomorphism sending $(S^3,  \Gamma_{28}^c(29))$ to $(S^3,  \Gamma_{15E}^a(29))$
since at the vertex of degree 30 in $(S^3, \Gamma_{15E}^a(29))$ all edges are tangent to some plane,
and this is not the situation for $(S^3, \Gamma_{28}^c(29))$.
So $\Gamma_{28}^c(29)$ and $\Gamma_{15E}^a(29)$ are not the same spatial graph.

Two graphs in each pair of  (ii), (iii), (iv),  (v) are different
spatial graphs, since one is knotted and the other is unknotted. We
will prove that these two graphs are $G$-equivalent by the method
discussed in last subsection, therefore finishing the proof of Theorem
\ref{detailed-classification} for genus 11, 9, 121, and 361.

%Otherwise they are
%isomorphic, and we will show on it the MS action is unique,

%we check whether two graphs given by Theorem
%\ref{position} are the same MS graph. In Appendix I, all the useful
%invariants have been given, which can be obtained as in the examples
%before. If two MS graphs in Theorem \ref{position} are different,
%then they will be distinguished by Appendix I.

When $g=11$, we will prove that the
representations induced by $a'$ in $\mathcal{O}_{28}$ and $b$ in
$\mathcal{O}_{34}$ are equivalent.
Using the Wirtinger presentation, we have the following.

\begin{figure}[h]
\centerline{\scalebox{0.8}{\includegraphics{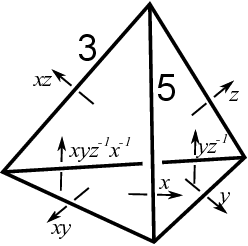}} \hspace{20pt}
\scalebox{0.6}{\includegraphics{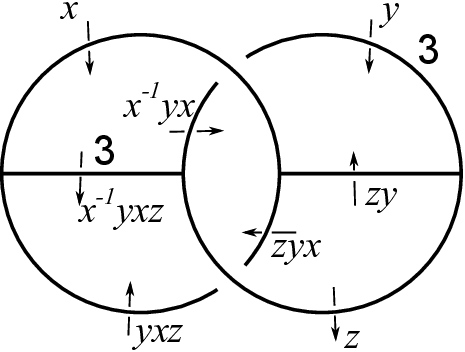}}} \caption{Presentations
of $\pi_1(\mathcal{O}_{28})$ and
$\pi_1(\mathcal{O}_{34})$}\label{fig:Pre2834}
\end{figure}
\begin{center}
\begin{tabular}{lcl}
$\pi_1(\mathcal{O}_{28})$&$=$&$\langle x,y,z\mid
x^5,y^2,z^2,(xz)^3,(xy)^2,(yz^{-1})^2\rangle$,\\
$\pi_1(\mathcal{O}_{34})$&$=$&$\langle x, y, z \mid
x^2,y^3,z^2,(zy)^2,(yxz)^2,(yxzx)^3\rangle$.
\end{tabular}
\end{center}

$\pi_1(a')\cong\pi_1(b)\cong D_2*_{\mathbb{Z}_2}D_3$. We choose
three generators for $\pi_1(a')$: $u$ is the generator of $D_2\cap
D_3\cong \Z_2$, $u_l$ is an order $2$ element in $D_2$ different
from $u$, $u_r$ is an order $3$ element in $D_3$. Similarly choose
generators $v$, $v_l$, $v_r$ for $\pi_1(b)$. Then the equivalence is
given by:

\begin{center}
\begin{tabular}{lcl}
$\pi_1(a')\rightarrow
\pi_1(\mathcal{O}_{28})$&$:$&$(u,u_l,u_r)\mapsto
(xyz^{-1}x^{-1}, xzx^{-1}, xz)$,\\
$\pi_1(b)\rightarrow \pi_1(\mathcal{O}_{34})$&$:$&$(v,v_l,v_r)
\mapsto (x^{-1}, y^{-1}z^{-1}, z^{-1}x^{-1}y^{-1}x)$,\\
$\pi_1(a')\rightarrow \pi_1(b)$&$:$&$(u,u_l,u_r)\mapsto (v,v_l,v_r)$,\\
$\pi_1(\mathcal{O}_{28})\rightarrow \pi_1(\mathcal{O}_{34})$&$:$&$
(\overline{u},\overline{u}_l,\overline{u}_r)\mapsto
(\overline{v},\overline{v}_l,\overline{v}_r)$.
\end{tabular}
\end{center}

Note that all the possible homomorphisms from a finitely presented group to a finite group are finitely many, and by using computer, for example \cite{GAP}, one can enumerate all such homomorphisms. Indeed we find the above homomorphisms by such a way.
%When $g = 241$, or $g = 3,5,7,17,19,29,41,97,601,1681$, or $g=841$,
%the classification can be given completely by Appendix I. The MS
%graphs are all different, for each one there is a unique spatial
%graph, hence the MS action is also unique.

When $g = 9, 121, 361$, we need to prove that for $N=20C, 22B, 22C$
the representations induced by $a$ and $b$ are equivalent.

$N=20C$:
\begin{figure}[h]
\centerline{\scalebox{0.8}{\includegraphics{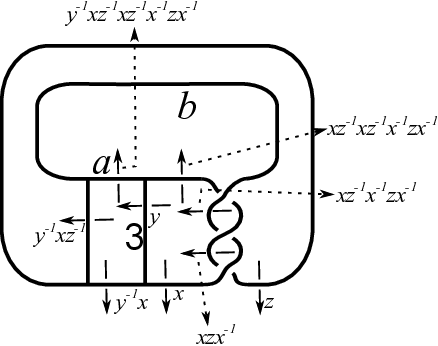}}}
\caption{Presentation of
$\pi_1(\mathcal{O}_{20C})$}\label{fig:Pre20C}
\end{figure}
\begin{center}
$\pi_1(\mathcal{O}_{20C})=\langle x, y, z\mid x^2, y^3, z^2,
(y^{-1}x)^2, (y^{-1}(xz)^3x)^2, (y^{-1}xz^{-1})^2\rangle$.
\end{center}

$\pi_1(a)\cong\pi_1(b)\cong D_2*_{\mathbb{Z}_2}D_3$. We choose three
generators for $\pi_1(a)$: $u$ is the generator of $D_2\cap D_3\cong
\mathbb{Z}_2$, $u_l$ is an order $2$ element in $D_2$ different from
$u$, $u_r$ is an order $3$ element in $D_3$.  Similarly choose
generators $v$, $v_l$, $v_r$ for $\pi_1(b)$. Then the equivalence is
given by:

\begin{center}
\begin{tabular}{lcl}
$\pi_1(a) \rightarrow \pi_1(\mathcal{O}_{20C})$ &$:$&
$(u,u_l,u_r)\mapsto (y^{-1}(xz)^3x, y^{-1}xz^{-1}, y)$,\\
$\pi_1(b) \rightarrow \pi_1(\mathcal{O}_{20C})$ &$:$&
$(v,v_l,v_r)\mapsto ((xz)^2x, y^{-1}(xz)^3x, xzxyxzx)$,\\
$\pi_1(a)\rightarrow \pi_1(b)$ &$:$& $(u,u_l,u_r)\mapsto
(v,v_l,v_r)$.
\end{tabular}
\end{center}

$N=22B$:
\begin{figure}[h]
\centerline{\scalebox{0.8}{\includegraphics{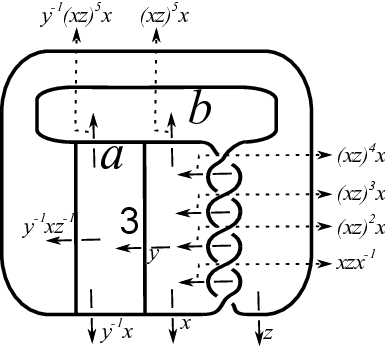}}}
\caption{Presentation of
$\pi_1(\mathcal{O}_{22B})$}\label{fig:Pre22B}
\end{figure}
\begin{center}
$\pi_1(\mathcal{O}_{22B})=\langle x, y, z\mid x^2, y^3, z^2,
(y^{-1}x)^2, (y^{-1}(xz)^5x)^2, (y^{-1}xz^{-1})^2\rangle.$
\end{center}

$\pi_1(a)\cong\pi_1(b)\cong D_2*_{\mathbb{Z}_2}D_3$. We choose three
generators for $\pi_1(a)$: $u$ is the generator of $D_2\cap D_3\cong
\mathbb{Z}_2$, $u_l$ is an order $2$ element in $D_2$ different from
$u$, $u_r$ is an order $3$ element in $D_3$.  Similarly choose
generators $v$, $v_l$, $v_r$ for $\pi_1(b)$. Then the equivalence is
given by:

\begin{center}
\begin{tabular}{lcl}
$\pi_1(a)\rightarrow \pi_1(\mathcal{O}_{22B})$ &$:$&
$(u,u_l,u_r)\mapsto (y^{-1}(xz)^5x, y^{-1}xz^{-1}, y)$,\\
$\pi_1(b)\rightarrow \pi_1(\mathcal{O}_{22B})$ &$:$&
$(v,v_l,v_r)\mapsto ((xz)^4x, y^{-1}xz^{-1}, (xzxz)y(xzxz)^{-1})$,\\
$\pi_1(a)\rightarrow \pi_1(b)$ &$:$& $(u,u_l,u_r)\mapsto
(v,v_l,v_r)$.
\end{tabular}
\end{center}

$N=22C$:
\begin{figure}[h]
\centerline{\scalebox{0.8}{\includegraphics{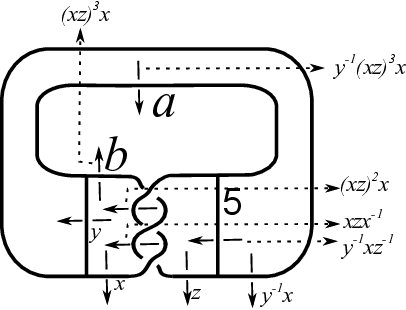}}}
\caption{Presentation of
$\pi_1(\mathcal{O}_{22C})$}\label{fig:Pre22C}
\end{figure}
\begin{center}
$\pi_1(\mathcal{O}_{22C})=\langle x, y, z\mid x^2, y^2, z^2,
(y^{-1}x)^2, (y^{-1}(xz)^3x)^2, (y^{-1}xz^{-1})^5\rangle.$
\end{center}

$\pi_1(a)\cong\pi_1(b)\cong D_2*_{\mathbb{Z}_2}D_5$.  We choose
three generators for $\pi_1(a)$: $u$ is the generator of $D_2\cap
D_5\cong \mathbb{Z}_2$, $u_l$ is an order $2$ element in $D_2$
different from $u$, $u_r$ is an order $5$ element in $D_5$.
Similarly choose generators $v$, $v_l$, $v_r$ for $\pi_1(b)$. Then
the equivalence is given by:

\begin{center}
\begin{tabular}{lcl}
$\pi_1(a)\rightarrow \pi_1(\mathcal{O}_{22C})$ &$:$&
$(u,u_l,u_r)\mapsto (x^{-1}y(zx)^{-3}, x^{-1}yx,
z^{-1}y^{-1}x)$,\\
$\pi_1(b)\rightarrow \pi_1(\mathcal{O}_{22C})$ &$:$&
$(v,v_l,v_r)\mapsto ((xz)^{-2}z^{-1},
x^{-1}y(zx)^{-3}, zxy(xz)^2)$,\\
$\pi_1(a)\rightarrow \pi_1(b)$ &$:$& $(u,u_l,u_r)\mapsto
(v,v_l*v,v_r^2)$.
\end{tabular}
\end{center}

In the last two cases, (vi) and(vii), that is, $g = 21$ and $481$, we need to prove that for $N=28, 34$ and for
$N=38$ all the representations induced by allowable dashed arcs are
equivalent.

Notice that in these cases $\Gamma/G$ are all dashed arcs whose
fundamental groups are isomorphic to $\Z_2\ast\Z_3$. By \cite{Du2},
the fundamental groups of orbifolds $\mathcal{O}_{28}$,
$\mathcal{O}_{34}$ and $\mathcal{O}_{38}$ are isomorphic to $I\times
\mathbb{Z}_2$, $A_5\times \mathbb{Z}_2$ and $\mathbf{I\times O}$
separately. The notations are explained as follows.

Let $O$ be the orientation-preserving isometry group of a regular octahedron (or a cube), and let $I$ be
the orientation-preserving isometry group of a regular icosahedron (or a regular dodecahedron). Then clearly $O$ and $I$ belong to $SO(3)\subset SO(4)$. Hence they also act on $S^3$. $\mathbf{I\times O}$ is the two-sheeted covering of $I\times O$ under the two-to-one map $SO(4)\rightarrow SO(3)\times SO(3)$, and it acts on $S^3$. $A_5$ acts on a regular 4-simplex, which has 5 vertices, as the orientation-preserving isometry group. Hence it also acts on $S^3$. Finally the $\mathbb{Z}_2$ summand acts on $S^3$ as an antipodal map which commutes with the action given by $I$ or $A_5$. Note that as abstract groups $O\cong S_4$ and $I\cong A_5$. However, the actions given by $I$ and $A_5$ are different.
%For other $g$, there is only one MS graph and only one spatial
%graph, hence the MS action is unique.

Then the results can be derived from Lemma \ref{same} below.
\end{proof}

\begin{remark}
In \cite{Du2}, $J$ is used to denote $I$ and notations
$\mathbf{J\times_J J}$, $\mathbf{J\times^{*}_J J}$ and
$\mathbf{J\times O}$ are used to denote the fundamental groups of
$\mathcal{O}_{28}$, $\mathcal{O}_{34}$ and $\mathcal{O}_{38}$.
\end{remark}

\begin{lemma}\label{same}
Suppose that $G$ is either $A_5\times
\mathbb{Z}_2$ or $\mathbf{I\times O}$, and that $\{x, y\}$ and $\{z, w\}$
both generate $G$, where $x, z$ both have order $2$ and $y, w$ both have order $3$. Then the map $x\mapsto z, y\mapsto w$ gives an automorphism of $G$.
\end{lemma}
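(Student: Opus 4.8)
The plan is to translate the statement into a question about the group $\mathbb{Z}_2\ast\mathbb{Z}_3$ and then reduce it to a numerical coincidence for each $G$. A pair $(x,y)$ generating $G$ with $x^2=y^3=1$ is the same datum as a surjection $\rho_{x,y}\colon\mathbb{Z}_2\ast\mathbb{Z}_3\twoheadrightarrow G$; here $x,y$ automatically have order exactly $2$ and $3$ and distinct images, since $G$ is nonabelian. An automorphism $\psi$ of $G$ with $\psi(x)=z$, $\psi(y)=w$ is precisely one with $\psi\circ\rho_{x,y}=\rho_{z,w}$, and such a $\psi$ exists if and only if $\ker\rho_{x,y}=\ker\rho_{z,w}$. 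Since $\mathrm{Aut}(G)$ acts freely on the set of surjections $\mathbb{Z}_2\ast\mathbb{Z}_3\to G$ (an automorphism fixing a generating set is the identity), its orbits on this set are in bijection with the normal subgroups $N\trianglelefteq\mathbb{Z}_2\ast\mathbb{Z}_3$ having $(\mathbb{Z}_2\ast\mathbb{Z}_3)/N\cong G$, and every orbit has size $|\mathrm{Aut}(G)|$. Hence the Lemma is equivalent to the assertion that for each of the three groups $G$ there is a \emph{unique} such $N$, i.e. that the number of $(2,3)$-generating pairs of $G$ equals $|\mathrm{Aut}(G)|$. That equality is what I would establish.

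For $G=I\times\mathbb{Z}_2$ and $G=A_5\times\mathbb{Z}_2$ — the same abstract group, as $I\cong A_5$ — the subgroups $1\times\mathbb{Z}_2=Z(G)$ and $A_5\times 1=[G,G]$ are characteristic, so (using that $A_5$ is perfect) $\mathrm{Aut}(G)\cong\mathrm{Aut}(A_5)\cong S_5$ has order $120$. In a $(2,3)$-generating pair $(x,y)$ of $G$ necessarily $y=(y_1,0)$ with $|y_1|=3$, surjectivity of the projection to $\mathbb{Z}_2$ forces $x=(x_1,1)$, and surjectivity of the projection to $A_5$ (which is not cyclic) forces $x_1$ to be an involution with $\langle x_1,y_1\rangle=A_5$; so these pairs biject with the $(2,3)$-generating pairs of $A_5$. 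The number of the latter is $15\cdot 20-60-120=120$ by inclusion–exclusion: $A_5$ has $15$ involutions and $20$ elements of order $3$, and the proper subgroups containing both are exactly the ten copies of $S_3$ ($6$ generating pairs each) and the five copies of $A_4$ ($24$ each), these pair-sets being disjoint. Thus the count is $120=|\mathrm{Aut}(G)|$. Equivalently, every $\sigma\in\mathrm{Aut}(A_5)$ lifts uniquely to $G$ and sends $(x_1,1)\mapsto(\sigma x_1,1)$, so transitivity of $S_5$ on $(2,3)$-generating pairs of $A_5$ produces the required $\psi$ outright.

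For $G=\mathbf{I\times O}$ (order $2880$) I would run the same reduction through the central extension $1\to\mathbb{Z}_2\to\mathbf{I\times O}\to A_5\times S_4\to 1$, where $\mathbb{Z}_2=\ker\big(SO(4)\to SO(3)\times SO(3)\big)$. A $(2,3)$-generating pair of $G$ projects to one of $A_5\times S_4$, and by Goursat's lemma (the simple group $A_5$ has no nontrivial quotient in common with $S_4$) such pairs are exactly those whose two coordinate projections generate $A_5$ and $S_4$ respectively. The $A_5$-count is $120$, and the $S_4$-count is $24$ (computed the same way: $9$ involutions, $8$ elements of order $3$, minus the $24+24$ pairs generating one of the four $S_3$'s or the unique $A_4$), so there are $120\cdot 24=2880=|S_5\times S_4|=|\mathrm{Aut}(A_5\times S_4)|$ such pairs downstairs. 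It then remains to count the lifts of each such pair to $\mathbf{I\times O}$ — each admissible order-$2$ and order-$3$ element of $A_5\times S_4$ has a fixed number of preimages of the correct order, dictated by the quaternionic structure of $2I$ and $2O$ — and to compute $|\mathrm{Aut}(\mathbf{I\times O})|$, and to check that the two numbers agree.

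The elementary cases are the $A_5$-based ones; the hard part will be $\mathbf{I\times O}$. There one must rule out a second $\mathrm{Aut}$-orbit, i.e. show that the outer automorphism of the icosahedral factor together with the possible central twists contribute no extra generating pairs, so that the total number of $(2,3)$-generating pairs of $\mathbf{I\times O}$ is exactly $|\mathrm{Aut}(\mathbf{I\times O})|$. Since both the lifting bookkeeping and the structure of $\mathrm{Aut}(\mathbf{I\times O})$ (a central product $2I\circ 2O$) are fiddly, I expect to certify this final equality by an explicit \cite{GAP} computation of the number of surjections $\mathbb{Z}_2\ast\mathbb{Z}_3\to\mathbf{I\times O}$ and of $|\mathrm{Aut}(\mathbf{I\times O})|$, of the kind already used repeatedly in this paper.
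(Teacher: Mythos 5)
Your reduction is correct and takes a genuinely different route from the paper. You recast the lemma as the statement that $\mathrm{Aut}(G)$ acts transitively on the set of epimorphisms $\mathbb{Z}_2\ast\mathbb{Z}_3\twoheadrightarrow G$, note the action is free, and so reduce everything to the numerical identity (number of $(2,3)$-generating pairs) $=|\mathrm{Aut}(G)|$; the paper instead puts the generators into an explicit permutation normal form ($(12)(34)(67)$ and $(135)$, resp.\ $(12)(34)(69)$ and $(135)(678)$), asserts directly that any two such pairs are related by an automorphism, and for $\mathbf{I\times O}$ uses conjugacy of the order-$3$ lifts to reduce to $36$ explicit choices of $z$, checked in GAP. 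For $A_5\times\mathbb{Z}_2$ your count $15\cdot20-60-120=120=|S_5|$ is right and is arguably a more complete argument than the paper's ``it is not hard to see.'' For $\mathbf{I\times O}$ both you and the paper end by deferring to a machine check, so you are not worse off; but note that your lifting bookkeeping actually closes by hand: every non-central involution of $2I\circ 2O$ lies over an involution of $A_5\times S_4$ with both coordinates nontrivial and \emph{both} its preimages have order $2$ (since elements of $2I$, $2O$ over involutions square to $-1$), each order-$3$ element downstairs has exactly one order-$3$ lift, and no proper subgroup surjects onto $A_5\times S_4$ because $2I\to A_5$ is a non-split central extension; hence there are $2880\cdot 2=5760$ pairs upstairs, matching $|\mathrm{Aut}(\mathbf{I\times O})|=2\cdot|S_5\times S_4|=5760$ (kernel of $\mathrm{Aut}(G)\to\mathrm{Aut}(G/Z)$ of order $2$ from the sign character of the $S_4$ factor, and full image since $\mathrm{Aut}(2I)\cong S_5$). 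Two small blemishes: the parenthetical identifying $\mathrm{Aut}(\mathbf{I\times O})$ with the central product $2I\circ 2O$ conflates the group with its automorphism group, and the worry about the outer automorphism ``contributing extra generating pairs'' is backwards --- extra automorphisms only make transitivity easier; the real issue is bounding the pair count from above, which the lifting argument does.
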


\begin{proof}
Note that $I\cong A_5$ and $O\cong S_4$, hence $\mathbf{I\times O}$ is
a two-sheeted covering of $A_5\times S_4$. Using the permutation
representation, we may assume that  $A_5$ acts on $\{1,2,3,4,5\}$,
$\mathbb{Z}_2$ acts on $\{6,7\}$, and $S_4$ acts on $\{6,7,8,9\}$.

For $A_5\times\Z_2$, if an
order $2$ element and an order $3$ element generate the group, then
the two generators have forms like $(12)(34)(67)$ and $(135)$. It
is not hard to see that the map between two such generating sets is an
isomorphism.

For $A_5\times S_4$,  if an
order $2$ element and an order $3$ element generate the group, then
the two generators have forms like $(12)(34)(69)$ and $(135)(678)$.
Also the map between two such generating sets is an isomorphism, and
$A_5\times S_4$ has the property stated in the lemma.

Now if an order $2$ element $x$ and an order $3$ element $y$
generate $\mathbf{I\times O}$, then the quotients $\overline{x}$ and
$\overline{y}$ in $A_5\times S_4$ have the above forms.
$\overline{y}$ has two preimages, one is $y$, the other has order
$6$. Hence all such $y$'s are conjugate in $\mathbf{I\times O}$.
Hence if $\{z, w\}$ satisfies the condition in the lemma, we can
assume that $w=y$. Then considering the permutation representation and
the two-to-one covering, the possible $z$ has $36$ choices.

Then using \cite{GAP}, one can confirm that all the $36$ choices
give equivalent generator pairs.
\end{proof}

%\begin{theorem}\label{MS-graph-classif}
%There are three notions of equivalence:

%1. As graphs, all $\Gamma^{\gamma}_N$ are different except:
%$\Gamma^{a'}_{28}\cong \Gamma^b_{34}$, $\Gamma^d_{28}\cong
%\Gamma^c_{34}$, $\Gamma^a_{15E}\cong \Gamma^c_{28}$ and
%$\Gamma^a_N\cong \Gamma^b_N$ if $N=20C, 22B, 22C$. If $N=28, 34$ or
%$N=38$, all the allowable dashed arcs give the same minimal MS-graph.

%2. As equivariant graphs, all $\Gamma^{\gamma}_N$ are different
%except: $\Gamma^{a'}_{28}\cong \Gamma^b_{34}$, $\Gamma^d_{28}\cong
%\Gamma^c_{34}$ and $\Gamma^a_N\cong \Gamma^b_N$ if $N=20C, 22B,
%22C$. If $N=28, 34$ or $N=38$, all the allowable dashed arcs give
%the same equivariant minimal MS-graph.

%3. As spatial graphs, all $\Gamma^{\gamma}_N$ ($N \neq 28, 38, 34$)
%are different except $\Gamma^a_{15E}\cong \Gamma^c_{28}$. When
%$N=28, 34, 38$, there will be infinitely many minimal MS-graphs.
%\end{theorem}

%\begin{proof}[Proof of Theorem \ref{MS-graph}]
%By Theorem \ref{position}, we only need to consider the case when
%$g$ equals $21$ or $481$. But by Theorem \ref{mgraphclass}, for
%these two cases the minimal MS-graph is unique.
%\end{proof}

%From Theorem \ref{MS-graph-classif}, we can easy to get
%the following classification of minimal (abstract and spatial) MS-graphs
% of genus $g$ for each $g$.

%\bigskip

\section{Maximum order of orientation-reversing case}

Now we can consider maximum order problems for extendable actions on
handlebodies and graphs for the general case, which allows group elements which reverse orientations.

Suppose $G$ acts on $(S^3, U_\epsilon(\Gamma))$. Then for any $g\in
G$, $g$ will preserve or reverse the orientations of
$U_\epsilon(\Gamma)$ and $S^3$ simultaneously, therefore preserve or
reverse the orientations of $\Sigma_g=\partial U_\epsilon(\Gamma)$ and $S^3$ simultaneously. That is to say, in
graph and handlebody cases if there is an orientation-reversing
element in $G$, $G$ must be of type $(-,-)$ on $(S^3, \partial U_\epsilon(\Gamma))$ as defined in
\cite{WWZ}. Then the maximum order of extendable actions for
orientation-reversing case will be $E_g(-,-)$, which is presented as
Proposition 4.7 in \cite{WWZ}.
By definition, the action $G$ realizing $E_g(-,-)$ is faithful on $\partial U_\epsilon(\Gamma)$,
therefore is faithful on the
handlebody $U_\epsilon(\Gamma)$, but may not be faithful on the graph $\Gamma$.

\begin{proposition}\label{[WWZ]} (Proposition 4.7 of \cite{WWZ} and its proof, also refer proof of Theorem \ref{max-OR} for the ``Moreover'' part)
$E_g(-,-)$ are given as below: {\small\begin{center}
\begin{tabular}{|c|c|}
\hline \raisebox{-1pt}{$E_g(-,-)$} & $g$\\\hline $24(g-1)$ & $3, 5,
6, 11, 17, 97, 601$\\\hline $16(g-1)$ & $7, 73$\\\hline $40(g-1)/3$
& $19$\\\hline $48(g-1)/5$ & $41$\\\hline $60(g-1)/7$ &
$1681$\\\hline $8(\sqrt{g}+1)^2$ & $k^2$, $k>1$\\\hline $8(g+1)$ &
the remaining numbers\\\hline
\end{tabular}
\end{center}}

Moreover  for each $g$,  $E_g(-,-)$ is realized (in the  orbifold
level) by a singular edge/dashed arc $\gamma$ of a 3-orbifold
$\mathcal{O}_N$ shown as (a) or (b) in Figure 8, where a
``reflection'' of $\mathcal{O}_N$ fixes $\gamma$ (in (a), the fixed-point set of the reflection is the plane of the paper and in (b), the fixed-point set of the reflection is the indicated plane). We omit the labels
here for convenience.
\begin{figure}[h]
\centerline{\scalebox{0.6}{\includegraphics{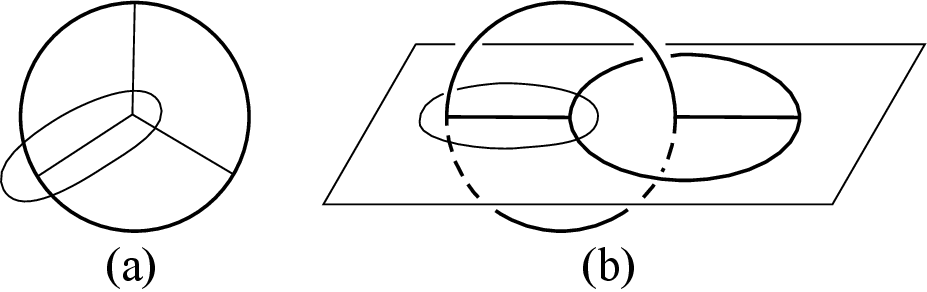}}}
\caption{Orbifolds with reflections}\label{fig:OadmitR}
\end{figure}
\end{proposition}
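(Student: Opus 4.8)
The table of $E_g(-,-)$ is copied verbatim from Proposition 4.7 of \cite{WWZ}, so there is nothing to reprove there; the content to establish is the ``Moreover'' clause, i.e. the description of a realizing configuration at the orbifold level. The plan is as follows. Let $G$ act on $(S^3, U_\epsilon(\Gamma))$ with $|G| = E_g(-,-)$; as observed just above the statement, such a $G$ is necessarily of type $(-,-)$ on $(S^3, \partial U_\epsilon(\Gamma)) = (S^3, \Sigma_g)$, so it contains an index-$2$ subgroup $G^+$ acting orientation-preservingly on $(S^3, \Sigma_g)$ with $|G^+| = E_g(-,-)/2$. Reading off the table, $E_g(-,-) \ge 8(g+1)$ for every $g$ (and strictly larger for the finitely many exceptional genera and for square genera), whence $|G^+| \ge 4(g+1) > 4(g-1)$. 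Thus the orientation-preserving machinery recalled in Section \ref{orbifold} applies to $G^+$: the $2$-suborbifold $\Sigma_g/G^+ \subset \mathcal{O}$, where we put $\mathcal{O} = S^3/G^+$, is allowable, $|\mathcal{O}| = S^3$, and $\Sigma_g/G^+$ bounds a handlebody orbifold equal to the regular neighborhood $N(\gamma)$ of a singular edge or dashed arc $\gamma$ of one of the spherical $3$-orbifolds in Theorem \ref{classify}; moreover $\Gamma = p^{-1}(\gamma)$ for the orbifold covering $p : S^3 \to \mathcal{O}$.

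Next I would analyse the residual involution. The deck group $G/G^+ \cong \mathbb{Z}_2$ descends to an orientation-reversing involution $\tau$ of $\mathcal{O}$, i.e. a ``reflection'' of the spherical $3$-orbifold, with $\mathcal{O}/\tau = S^3/G$. Because $\Sigma_g$ (equivalently $U_\epsilon(\Gamma)$, equivalently $\Gamma$) is $G$-invariant, the suborbifold $\Sigma_g/G^+$, the handlebody orbifold $N(\gamma)$ it cobounds, and hence the edge/arc $\gamma$ itself are $\tau$-invariant; when $\gamma$ is unknotted one uses the freedom in the choice of representative within the $\tau$-orbit of $\gamma$ (or of $\tau$ within its isotopy class) to arrange that $\gamma$ lies in the fixed set of the reflection $\tau$. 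Conversely, any triple $(\mathcal{O}, \gamma, \tau)$ consisting of a spherical $3$-orbifold $\mathcal{O}$ with $|\mathcal{O}| = S^3$, an allowable singular edge/dashed arc $\gamma$, and a reflection $\tau$ of $\mathcal{O}$ fixing $\gamma$ produces, by passing to the covering $S^3 \to \mathcal{O}$, a type $(-,-)$ action of order $2|\pi_1(\mathcal{O})|$ on $(S^3, \Sigma_g)$ that restricts to the preimage handlebody and graph. Hence computing $E_g(-,-)$ amounts to maximizing $2|\pi_1(\mathcal{O})|$, for each fixed genus $g = g(p^{-1}(\gamma))$, over all such triples.

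Finally this maximization is a finite bookkeeping task. Dunbar's classifications \cite{Du1}, \cite{Du2} list the spherical $3$-orbifolds with underlying space $S^3$ together with their isometry groups, in particular their reflections; intersecting this data with the list of allowable edges/arcs already compiled in Theorem \ref{classify} (and used in the proof of Theorem \ref{max-OR}), one isolates, genus by genus, the triples $(\mathcal{O}, \gamma, \tau)$ of maximal order. One then checks that in every case $(\mathcal{O}, \gamma)$ is, up to symmetry, one of the two schematic pictures (a), (b) of Figure \ref{fig:OadmitR}, with $\gamma$ lying on the mirror of $\tau$, and that the resulting orders reproduce the tabulated values of $E_g(-,-)$. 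The only real difficulty is this case analysis — keeping track of which orbifold in the list carries a reflection with a fixed allowable edge, and comparing orders across the various exceptional and square genera — since the conceptual part reduces entirely to the index-$2$ passage above together with the cited results of \cite{WWZ} (for the table) and of Dunbar (for the reflections).
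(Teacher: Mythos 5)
The paper gives no self-contained proof of this proposition: the table is imported verbatim from Proposition 4.7 of \cite{WWZ}, and the ``Moreover'' clause is only glossed in step (*) of the proof of Theorem \ref{max-OR}, where it is read as saying that $E_g(-,-)$ is realized by the orientation-preserving maximum symmetry of some $\Gamma_N^\gamma(g)$ composed with reflections about geodesic $2$-spheres leaving $\Gamma_N^\gamma(g)$ invariant. Your reconstruction --- passing to the index-$2$ orientation-preserving subgroup $G^+$ with $|G^+|=E_g(-,-)/2>4(g-1)$, invoking the allowable $2$-orbifold machinery of Section 2 so that $\Sigma_g/G^+$ bounds $N(\gamma)$ for an edge/arc $\gamma$ in Theorem \ref{classify}, and reading the residual $\mathbb{Z}_2$ as a reflection of $S^3/G^+$ --- is exactly the route that gloss presupposes, so the approach is sound and consistent with how the paper uses the result. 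The one soft spot is that the genuine content of the ``Moreover'' part, namely that the reflection can be arranged to fix $\gamma$ pointwise (so that $\gamma$ lies on the mirror, rather than the involution merely preserving $N(\gamma)$ setwise and possibly reversing $\gamma$), is asserted and then deferred to an unperformed case-by-case check against Dunbar's data; that check is where the substance of the claim lives, but since the paper itself delegates it to \cite{WWZ}, you are not omitting anything the paper actually supplies.
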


Similar to Proposition  \ref{SO(4)}, we have the following.
\begin{proposition}\label{O(4)}
Suppose $G\subset O(4)$ acts on $(S^3, \Gamma)$, $g\in G$  is not
the identity of $G$ and its action  on $\Gamma$ is the identity.
Then $g$ must be a reflection about a geodesic 2-sphere $S^2$ in
$S^3$, and $\Gamma\subset S^2$.
\end{proposition}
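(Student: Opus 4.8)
The plan is to analyze an orientation-behaviour dichotomy for the element $g$, exactly as in the proof of Proposition \ref{SO(4)}, but now allowing $g$ to be orientation-reversing. First I would split into two cases according to whether $g \in SO(4)$ or $g \in O(4) \setminus SO(4)$. If $g$ preserves orientation, then the argument of Proposition \ref{SO(4)} applies verbatim: $\text{Fix}(g)$ is empty, a circle, or all of $S^3$, and since $\Gamma \subset \text{Fix}(g)$ and $\Gamma$ (a graph of genus $g>1$) is not contained in a circle, we would get $\text{Fix}(g) = S^3$, so $g$ is the identity, contradicting the hypothesis. Hence $g$ must be orientation-reversing.

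Next I would invoke the classification of orientation-reversing isometries of the round $S^3$. An orientation-reversing element of $O(4)$ is conjugate to a map whose fixed-point set is either empty (e.g. the antipodal map, or more generally a glide-type isometry), a single point together with... more precisely, the fixed set of an orientation-reversing orthogonal involution or rotation-reflection is a geodesic subsphere of even codimension's complement — the relevant fact is that $\text{Fix}(g)$ for orientation-reversing $g \in O(4)$ is either empty, a pair of points ($S^0$), or a geodesic 2-sphere $S^2$. Since $\Gamma \subset \text{Fix}(g)$ and $\Gamma$ is a connected graph of genus $g > 1$, which cannot be embedded in $S^0$ or in a circle, the only possibility is $\text{Fix}(g) = S^2$, a geodesic 2-sphere. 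Then $g$ is the reflection in that $S^2$ (the unique nontrivial isometry fixing $S^2$ pointwise), and $\Gamma \subset \text{Fix}(g) = S^2$, which is the conclusion.

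The one step requiring genuine care is the determination of the fixed-point sets of orientation-reversing elements of $O(4)$: one should diagonalize $g$ over $\mathbb{R}$ into $2\times 2$ rotation blocks and $\pm 1$ eigenvalues, note that orientation-reversing forces an odd number of $-1$ eigenvalues (counting a rotation block through angle $\pi$ appropriately), and read off that the $+1$-eigenspace has dimension $0$, $1$, or $3$ — giving $\text{Fix}(g)$ equal to $\emptyset$, $S^0$, or $S^2$ respectively (dimension $2$, i.e. an $S^1$ of fixed points, would force the remaining $2\times 2$ block to have determinant $-1$ and hence contribute a further fixed $S^0$, actually enlarging the fixed set; the clean statement is that a $+1$-eigenspace of dimension exactly $2$ does not occur for orientation-reversing $g$, since then $\det g = \det(\text{the remaining }2\times2\text{ block}) = +1$). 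This is the main obstacle only in the bookkeeping sense; conceptually it is the standard spectral description of $O(4)$, and once it is in hand the genus hypothesis $g>1$ immediately eliminates all cases except the geodesic $S^2$, completing the proof.
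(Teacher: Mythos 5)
Your proposal is correct and follows essentially the same route as the paper: reduce to the orientation-reversing case via Proposition \ref{SO(4)}, then observe that the fixed-point set of an orientation-reversing isometry of $S^3$ is empty, a pair of points, or a geodesic $2$-sphere, and rule out the first two by the genus hypothesis. The only difference is that the paper cites this fixed-point classification as a ``classical result'' while you supply the spectral-decomposition bookkeeping (correctly, including the exclusion of a $+1$-eigenspace of dimension exactly $2$).
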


\begin{proof}
By Proposition \ref{SO(4)}, we may assume that $g$ is orientation-reversing. By results in linear algebra, $\text{Fix}(g)$ is
either a pair of points or a geodesic 2-sphere
$S^2\subset S^3$. Since $\Gamma\subset  \text{Fix}(g)$,
$\text{Fix}(g)$ must be a geodesic $S^2(\supset\Gamma)$ and $g$
interchanges the two 3-balls separated by $S^2$.
\end{proof}

%\begin{corollary}  Suppose $G\subset O(4)$ acts on $(S^3, \Gamma)$.
%If $\Gamma$ does not stay in a geodesic 2-sphere in $S^3$,
%the the action of $G$ on $S^3$ is faithful.
%\end{corollary}

Suppose that the $G$-action on $\Gamma$ is not faithful. Then it is easy
to see that if we $G$-equivariantly add some free edges to $\Gamma$
perpendicular to the geodesic $S^2$ containing $\Gamma$ to get
$\Gamma^*$, then $G$ acts faithfully on $\Gamma^*$.

Now let  $E(V_g)$, $M^*_g$ and $M_g$ be the general maximum orders of extendable group actions on
handlebodies, arbitrary graphs, and minimal graphs,  of genus $g$ respectively. Then we have

\begin{theorem}\label{max-OR}  (1) $m_g=OE_g\le M_g \le E(V_g)=M^*_g.$

(2) $E(V_g)=M^*_g$  are given in the following table.

\begin{center}
\begin{tabular}{|c|c|}
\hline $E(V_g)=M^*_g$ & $g$\\\hline $24(g-1)$ & $2, 3, 4, 5, 6, 11, 17,
97, 601$\\\hline $16(g-1)$ & $7, 9, 73$\\\hline $40(g-1)/3$ & $16,
19$\\\hline $12(g-1)$ & $25, 121, 241$\\\hline $48(g-1)/5$ &
$41$\\\hline $60(g-1)/7$ & $29, 841, 1681$\\\hline $8(\sqrt{g}+1)^2$
& $k^2$, $k\neq 11$\\\hline $8(g+1)$ & remaining numbers\\\hline
\end{tabular}
\end{center}

(3) $M_g$  are given in the following table.

\begin{center}
\begin{tabular}{|c|c|}
\hline $M_g$ & $g$\\\hline $24(g-1)$ & $3, 4, 5, 6, 11, 17,
97, 601$\\\hline $16(g-1)$ & $7, 9, 73$\\\hline $40(g-1)/3$ & $16,
19$\\\hline $12(g-1)$ & $2, 25, 121, 241$\\\hline $48(g-1)/5$ &
$41$\\\hline $60(g-1)/7$ & $29, 841, 1681$\\\hline $8(\sqrt{g}+1)^2$
& $k^2$, $k\neq 11$\\\hline $8(g+1)> M_g \ge 4(g+1)$ & remaining numbers\\\hline
\end{tabular}
\end{center}
\end{theorem}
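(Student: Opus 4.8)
\textbf{Proof proposal for Theorem \ref{max-OR}.}

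The plan is to prove the three parts in sequence, leaning heavily on Proposition \ref{[WWZ]} (which supplies $E_g(-,-)$ and the orbifold-level realization with a reflection), on Theorem \ref{mg} (which gives $m_g=OE_g$), and on the easy observation following Proposition \ref{O(4)}. First, for part (1), the chain of inequalities: $m_g=OE_g$ is Theorem \ref{mg}; $OE_g\le M_g$ because an orientation-preserving action realizing $OE_g$ is in particular an extendable action on a minimal graph (after deleting free edges and extra vertices, as discussed after Theorem \ref{mg}, which does not decrease the order); $M_g\le M_g^*$ trivially since minimal graphs are a subclass of all graphs; and $M_g^*\le E(V_g)$ because an extendable $G$-action on $(S^3,\Gamma)$ gives an extendable action on the handlebody $U_\epsilon(\Gamma)$, faithful by Proposition \ref{SO(4)} (or its $O(4)$-analogue, since $\Gamma$ has $g>1$ so $U_\epsilon(\Gamma)$ is not a ball). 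For the reverse inequality $E(V_g)\le M_g^*$: given an action on $(S^3,V_g)$ realizing $E(V_g)$, take $\Gamma$ to be a $G$-equivariant spine of $V_g$; the action on $\Gamma$ may fail to be faithful, but by the remark after Proposition \ref{O(4)} we may $G$-equivariantly attach free edges perpendicular to the fixed geodesic $S^2$ to obtain a graph $\Gamma^*$ of the same genus on which $G$ acts faithfully, so $E(V_g)\le M_g^*$. This gives $E(V_g)=M_g^*$.

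For part (2), I would argue $E(V_g)=\max\{OE_g,\,E_g(-,-)\}$: any extendable action on $(S^3,V_g)$ is either orientation-preserving (order $\le OE_g=m_g$, realized on $V_g$ by taking the handlebody orbifold $V_g/G$ over an allowable edge as in the proof of Theorem \ref{mg} — see Remark after Theorem \ref{position}), or contains an orientation-reversing element, forcing type $(-,-)$ on $\partial V_g$ and hence order $\le E_g(-,-)$, realized by the reflection-symmetric orbifold edge of Proposition \ref{[WWZ]}. So $E(V_g)=\max\{m_g,E_g(-,-)\}$, and the table in (2) is obtained by comparing the $m_g$-table of Theorem \ref{mg} with the $E_g(-,-)$-table of Proposition \ref{[WWZ]} entry by entry; the stated values are exactly these maxima (note e.g. $g=2$ gives $\max\{12,24\}=24$, $g=9$ gives $\max\{96,128\}$ wait — this needs the numerical check to come out as claimed, which is the routine bookkeeping part).

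For part (3), the orientation-preserving half again contributes $m_g$, which is realized by a \emph{minimal} graph. The orientation-reversing half is the delicate one: the action realizing $E_g(-,-)$ is faithful on $\partial U_\epsilon(\Gamma)$ and on the handlebody, but its restriction to a spine $\Gamma$ need not be faithful, and the fix (adding free edges) destroys minimality. So $M_g$ is the maximum of $m_g$ and the largest order of a $(-,-)$-type action that \emph{is} faithful on some minimal spine. Using Proposition \ref{[WWZ]}'s orbifold description, the relevant spines are preimages $p^{-1}(\gamma)$ of the singular edge/dashed arc $\gamma$ fixed by the reflection; I would check when such a preimage is already minimal and when the action on it is faithful (by Proposition \ref{O(4)}, unfaithfulness means $\Gamma$ lies in a geodesic $S^2$, a planarity condition one can read off the orbifold picture). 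For the listed finite set of $g$ the answer matches the $E(V_g)$-table, so $M_g=E(V_g)$ there; for the remaining $g$ one gets the bounds $8(g+1)>M_g\ge 4(g+1)$ — the upper bound because $E_g(-,-)=8(g+1)$ there but that order forces a non-minimal graph (one must verify the extremal $(-,-)$-orbifold in those cases has $p^{-1}(\gamma)$ non-minimal or the action non-faithful on every minimal spine), and the lower bound $4(g+1)$ from the orientation-preserving realization $m_g=4(g+1)$. The main obstacle is precisely this last point: showing that in the generic remaining case no minimal graph admits a faithful extendable $(-,-)$-action of order $8(g+1)$, i.e. that the order-$8(g+1)$ handlebody action genuinely cannot be pushed onto a minimal spine faithfully — this is what prevents pinning down $M_g$ exactly and is why the conjecture is left open.
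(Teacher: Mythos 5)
Your overall architecture coincides with the paper's: part (1) from the definitions, Theorem \ref{mg} and the free-edge discussion around Proposition \ref{O(4)}; part (2) as $E(V_g)=\max\{m_g,\,E_g(-,-)\}$ followed by an entry-by-entry comparison of the two tables; part (3) by deciding, for the edge $\gamma$ realizing $E_g(-,-)$ in Proposition \ref{[WWZ]}, whether $\Gamma_N^\gamma(g)$ can avoid lying in a geodesic $2$-sphere, so that Proposition \ref{O(4)} forces the $(-,-)$-action to be faithful. The paper carries out exactly this test, either via the local criterion that some vertex of $\gamma$ has adjacent singular edges of indices $(2,3,q)$, or by exhibiting a non-planar spine such as $K_{(k+1,k+1)}=\Gamma_{19}^a(k^2)$; your proposal matches this.

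There are, however, two concrete gaps in your part (3). First, you present the strict inequality $M_g<8(g+1)$ for the remaining $g$ as an unresolved ``main obstacle'' and tie it to the open conjecture; but the theorem asserts this inequality and the paper proves it. The missing step is: a faithful extendable action of order $8(g+1)$ on a minimal genus-$g$ graph would be of type $(-,-)$, its orientation-preserving half would have order $4(g+1)=m_g$, so by Theorem \ref{detailed-classification} the graph would be the unique spatial MS graph $\Gamma_{15E}^a(g)\cong K_{(2,g+1)}$, which lies in a geodesic $2$-sphere and on which the order-$8(g+1)$ action is not faithful. What the conjecture leaves open is only the exact value of $M_g$ inside the interval $[4(g+1),\,8(g+1))$, not the strict upper bound itself. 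Second, your scheme does not produce the entry $M_2=12$ (note the table places $g=2$ in the $12(g-1)$ row while $E(V_2)=24$): the planarity obstruction only kills order $24$, and to exclude every order in $(12,24]$ one needs the additional fact, read off from Theorem \ref{classify} and the proof of Theorem \ref{mg}, that the second largest orientation-preserving extendable order in genus $2$ is $6$, whence a faithful $(-,-)$-action has order at most $12$. Without these two supplements the table in (3) is not fully established.
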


\begin{proof}
(1) follows from the definitions, Theorem \ref{mg} and the paragraph after Proposition \ref{O(4)}.

(2) Suppose that $G$ acts on $V_g$ realizing $E(V_g)$. Then $G$ is either
orientation-preserving or of the type $(-,-)$. To get the table in
(2), we need only  to compare tables in  Theorem \ref{mg} (see also
Remark 3.4) and in Proposition \ref{[WWZ]}, where $E(V_g)$ are
chosen from the former for $g=25,121, 241$, and  from the later for
the remaining $g$. Note that for table in (2) we can put 4, 9, 16, 25,
841  into the line of $k^2$, and 29 into the bottom line, then it
has the form  in the table of Proposition \ref{[WWZ]}.

(3) By (2) and its proof, there is nothing to be further verified for $g=25,121, 241$, and to prove (3),
we need to verify the following

{\bf Claim} for all remaining $g$, $M_g=E_g(-,-)$ if and only if $g$
is not in the bottom line and $g\ne 2$, and further more $M_2=12$.

(*) Note first that the ``Moreover'' part of Proposition \ref{[WWZ]}
can be interpreted as that, for each $g$, $E_g(-,-)$ is realized by
the orientation-preserving maximum symmetry of $\Gamma_N^\gamma(g)$
and reflections about geodesic 2-spheres which keep
$\Gamma_N^\gamma(g)$ invariant. Therefore if  $\Gamma_N^\gamma(g)$
does not stay in any geodesic 2-sphere, then the group action
realizing $E_g(-,-)$ acts faithfully on $\Gamma_N^\gamma(g)$.

(**) Note then that $\Gamma_N^\gamma(g) \subset S^3$ does not stay
in any geodesic 2-sphere in $S^3$ if a vertex $v$ of  $\gamma$ in
the singular set of $\mathcal{O}_N$ has three adjacent edges with
index $(2, 3, q)(q=3,4,5)$, since for any vertex $v'$ of
$\Gamma_N^\gamma(g)$ in $p^{-1}(v)$, where $p: S^3\to \mathcal{O}_N$
is the orbifold covering, the action of the stabilizer $St(v')$ on a
neighborhood of $v'$ is the same as the isometry of some regular
polyhedron, and adjacent edges of $v'$ can not lie in any geodesic
two sphere.

%one of those three well-know finite subgroups of $SO(3)$,
%which does not keep any geodesic two sphere invariant.

Now we are going to verify the ``if'' part of the claim by  finding
$\Gamma_N^\gamma(g)$ realizing $E_g(-,-)$ so that  either $\gamma$
in $  \mathcal{O}_N$ meets (**), or more directly
$\Gamma_N^\gamma(g)$ is non-planar. These will be carried in (i) and
(ii) below respectively. Therefore the group action $G$ realizing
$E_g(-,-)$ acts faithfully on $\Gamma_N^\gamma(g)$.

(i) For $g=3, 5, 11, 6, 17, 97, 601, 7, 73, 19,  41, 1681, 29$, (we follow the order of their appearance in Theorem 3.3), we choose
$\Gamma_N^\gamma(g)$ to be $\Gamma_{26}^{a}(3)$,  $\Gamma_{27}^{a}(5)$,  $\Gamma_{28}^{a}11)$, $\Gamma_{24}^{a}(6)$,
$\Gamma_{29}^{a}(17)$, $\Gamma_{33}^{a'}(97)$, $\Gamma_{30}^{a'}(601)$, $\Gamma_{27}^{b}(7)$, $\Gamma_{25}^{a}(73)$,
$\Gamma_{28}^{b}(19)$, $\Gamma_{29}^{b'}(41)$, $\Gamma_{30}^{b'}(1681)$, $\Gamma_{28}^{c'}(29)$.

(ii) For all the  squares $g=k^2$, we choose
$\Gamma_N^\gamma(g)$ to be $\Gamma_{19}^{a}(k^2)$, where the parameter $n$  in $\mathcal{O}_{19}$
is chosen to be $n=k+1$. As an abstract graph $\Gamma_{19}^{a}(k^2)$ can be obtained from the complete bipartite graph $K_{(k+1, k+1)}$ by adding vertices. $K_{(k+1, k+1)}$ is a well-known non-planar graph, hence $\Gamma_{19}^{a}(k^2)$ can not
lie in a geodesic sphere.

For $g=2$ and the remaining cases, the graph realizing $E_g(-,-)$ is
$\Gamma_{15E}^a(g)$, which is abstractly isomorphic to
$K_{(2,g+1)}$, indeed lies in a geodesic 2-sphere,  and the action
on $\Gamma_{15E}^a(g)$ realizing $E_g(-,-)$ is not faithful (see
also Appendix \ref{appendixB}).

By Theorem \ref{classify} and the proof of Theorem \ref{mg}, the
second biggest order of orientation-preserving extendable action on
a genus 2 graph is $6$ (corresponds to `$a$' in $\mathcal{O}_{01}$).
Hence $M_2\leq 12$, and we have $M_2=12$.
\end{proof}
%$\Sigma_2$ , which can be realized by the dashed arc  in , which has
%a position shown as in Figure \ref{fig:OadmitR}(b) and $a$ meets
%(**). So $M_2=12$.

\begin{example} Suppose $g=pq$, $p$ is the smallest non trivial divisor of $g$.
As the construction for $g=k^2$ for general case we can get a complete bipartite graph $K_{(p+1,q+1)}\subset S^3$ which has genus $g$ and on it there is an
extendable group action with order $4(p+1)(q+1)$ which is bigger
than $4(g+1)$ (see examples in \cite{WWZZ1} and \cite{WWZZ}). Clearly $K_{(p+1,q+1)}$ is non-planar.
\end{example}

\begin{conjecture}
Suppose $g$ is neither a square number  nor one of those finitely
many $g$ listed in the table above. Then $M_g$ is $4(g+1)$ for prime
$g$, and $4(p+1)(q+1)$ otherwise, where $pq=g$, $p$ is the smallest
non trivial divisor of $g$.
\end{conjecture}

\begin{remark}
(1) The maximum order of finite group action on minimal graphs of genus $g$ is $2^gg!$ if $g>2$ and is 12 if $g=2$ \cite{WZ}.

(2) Any faithful action of finite group $G$ on a minimal graph $\Gamma$ of genus $g$ provides an embedding
of $G$ into the out-automorphism group of the free group of rank $g$ for $g>1$.
\end{remark}

\newpage

\begin{appendix}
\section{Table of  MS graphs with invariants}\label{appendixA}
This appendix contains some basic invariants of all the MS graphs.

In the
following table, we denote by $d_k$ the number of vertices of degree
$k$; $E$ the number of edges of a graph; $D$ the diameter of a
graph; $\mathcal{G}$ the girth (or the length of a minimal loop) of
a graph. Below we explain the last column  which presents more standard names of those graphs if they have.

We use $K_n$ to denote the complete graph with $n$ vertices and $K_{m,n}$ to denote the complete bipartite graph with $m+n$ vertices.
The generalized Petersen graph $G(n,k)$ is a graph with vertex set
$$\{u_0, u_1, \cdots, u_{n-1}, v_0, v_1, \cdots, v_{n-1}\}$$
and edge set
$$\{u_iu_{i+1}, u_iv_i, v_iv_{i+k}\mid i=0, \cdots, n-1\}$$
where subscripts are to be read modulo $n$ and $k < n/2$.

In the following table, we use $\widetilde{K}_n$ (resp. $\widetilde{K}_{m,n}$, $\widetilde{G}(n,k)$) to denote the graph obtained by adding one degree $2$ vertex at the middle of each edge of $K_n$ (resp. $K_{m,n}$, $G(n,k)$).
Moreover the precise meaning of  ``1-skeleton" in the last column also means a graph obtained by adding one degree $2$ vertex at the middle of each edge of that 1-skeleton.

\begin{adjustwidth}{-4em}{0em}
\begin{center}
\begin{tabular}{|l|ll|r|r|r|r|}
\hline \multicolumn{1}{|c|}{name} & \multicolumn{2}{|c|}{number of vertices} & \multicolumn{1}{|c|}{E} & \multicolumn{1}{|c|}{D} & \multicolumn{1}{|c|}{$\mathcal{G}$}& \multicolumn{1}{|c|}{annotation} \\
\hline \multicolumn{7}{|c|}{$|G|=12(g-1)$} \\
\hline $\Gamma_{15E}^a(2)$ & $d_2=3$ & $d_3=2$ & 6 & 2 & 4  & $K_{2,3}$\\
\hline $\Gamma_{26}^a(3)$ & $d_2=4$ & $d_4=2$ & 8 & 2 & 4 &  $K_{2,4}$\\
\hline $\Gamma_{26}^{a'}(3)$ & $d_2=6$ & $d_3=4$ & 12 & 4 & 6 & $1$-skeleton of  tetrahedron, $\widetilde{K}_4$\\
\hline $\Gamma_{19}^a(4)$ & $d_2=9$ & $d_3=6$ & 18 & 4 & 8  & $\widetilde{K}_{3,3}$\\
\hline $\Gamma_{27}^{a}(5)$ & $d_2=6$ & $d_6=2$ & 12 & 2 & 4 &$K_{2,6}$\\
\hline $\Gamma_{27}^{a'}(5)$ & $d_2=12$ & $d_3=8$ & 24 & 6 & 8 & $1$-skeleton of cube, $\widetilde{G}(4,1)$\\
\hline $\Gamma_{24}^{a}(6)$ & $d_2=10$ & $d_4=5$ & 20 & 4 & 6 & $1$-skeleton of $4$-simplex, $\widetilde{K}_5$\\
\hline $\Gamma_{20C}^{a}(9)$ & \multirow{2}{*}{$d_2=24$} & \multirow{2}{*}{$d_3=16$} & \multirow{2}{*}{48} & \multirow{2}{*}{8} & \multirow{2}{*}{12} &  M\"{o}bius-Kantor graph,\\
       $\Gamma_{20C}^{b}(9, k)$ & & & & & &$\widetilde{G}(8,3)$\\
\hline $\Gamma_{28}^{a}(11)$ & $d_2=12$ & $d_{12}=2$ & 24 & 2& 4 &$K_{2,12}$\\
\hline $\Gamma_{28}^{a'}(11)$ & \multirow{2}{*}{$d_2=30$} & \multirow{2}{*}{$d_3=20$} & \multirow{2}{*}{60} & \multirow{2}{*}{10} & \multirow{2}{*}{10} & $1$-skeleton of dodecahedron,  \\
       $\Gamma_{34}^{b}(11, k)$ & & & & & & $\widetilde{G}(10,2)$\\
\hline $\Gamma_{34}^{a}(11)$  & $d_2=20$ & $d_4=10$ & 40 & 6 & 8 &\\
\hline $\Gamma_{34}^{a'}(11)$  & $d_2=30$ & $d_3=20$ & 60 & 10& 12  & $\widetilde{G}(10, 3)$, Desargues graph\\
\hline $\Gamma_{29}^{a}(17)$  & $d_2=24$ & $d_6=8$ & 48 & 4& 6&$1$-skeleton of  $16$-cell\\
\hline $\Gamma_{29}^{a'}(17)$  & $d_2=32$ & $d_4=16$ & 64 & 8& 8& $1$-skeleton of $4$-dim cube\\
\hline $\Gamma_{21A}^{a}(25)$  & $d_2=72$ & $d_3=48$ & 144 & 12& 16&\\
\hline $\Gamma_{33}^{a}(97)$  & $d_2=288$ & $d_3=192$ & 576 & 24& 16 & \\
\hline $\Gamma_{33}^{a'}(97)$  & $d_2=144$ & $d_6=48$ & 288 & 8& 8 & \\
\hline $\Gamma_{22B}^{a}(121)$ & \multirow{2}{*}{$d_2=360$} & \multirow{2}{*}{$d_3=240$} & \multirow{2}{*}{720} & \multirow{2}{*}{22} & \multirow{2}{*}{20}  & \\
       $\Gamma_{22B}^{b}(121,k)$ & & & & & &\\
\hline $\Gamma_{38}^{a}(241)$  & $d_2=480$ & $d_4=240$ & 960 & 20& 12&\\
\hline $\Gamma_{38}^{b}(241,k)$  & $d_2=720$ & $d_3=480$ & 1440 & 21& 24& \\
\hline $\Gamma_{38}^{c}(241,k)$  & $d_2=480$ & $d_4=240$ & 960 & 14& 16& \\
\hline $\Gamma_{38}^{a'}(241)$  & $d_2=720$ & $d_3=480$ & 1440 & 30& 18& \\
\hline $\Gamma_{30}^{a}(601)$  & $d_2=720$ & $d_{12}=120$ & 1440 & 10& 6& $1$-skeleton of  $600$-cell\\
\hline $\Gamma_{30}^{a'}(601)$  & $d_2=1200$ & $d_{4}=600$ & 2400 & 30& 10& $1$-skeleton of $120$-cell\\
\hline \multicolumn{7}{|c|}{$|G|=8(g-1)$} \\
\hline $\Gamma_{27}^{b}(7)$ & $d_2=8$ & $d_8=2$ & 16 & 2 & 4 & $K_{2,8}$\\
\hline $\Gamma_{27}^{b'}(7)$  & $d_2=12$ & $d_4=6$ & 24 & 4& 6& $1$-skeleton of  octahedron\\
\hline $\Gamma_{21B}^{a}(49)$  & $d_2=96$ & $d_4=48$ & 192 & 10 & 12&\\
\hline $\Gamma_{25}^{a}(73)$  & $d_2=96$ & $d_8=24$ & 192 & 6& 6& $1$-skeleton of  $24$-cell\\
\hline \multicolumn{7}{|c|}{$|G|=20(g-1)/3$} \\
\hline $\Gamma_{19}^{a}(16)$  & $d_2=25$ & $d_5=10$ & 50 & 4 & 8 & $\widetilde{K}_{5,5}$\\
\hline $\Gamma_{28}^{b}(19)$  & $d_2=20$ & $d_{20}=2$ & 40 & 2& 4&$K_{2,20}$\\
\hline $\Gamma_{28}^{b'}(19)$  & $d_2=30$ & $d_{5}=12$ & 60 & 6& 6& $1$-skeleton of  icosahedron\\
\hline $\Gamma_{22C}^{a}(361)$ & \multirow{2}{*}{$d_2=600$} & \multirow{2}{*}{$d_{5}=240$} & \multirow{2}{*}{1200} & \multirow{2}{*}{14} & \multirow{2}{*}{12} &\\
       $\Gamma_{22C}^{b}(361,k)$ & & & & & &\\
\hline \multicolumn{7}{|c|}{$|G|=6(g-1)$} \\
\hline $\Gamma_{28}^{d}(21,k)$ & \multirow{2}{*}{$d_2=960$} & \multirow{2}{*}{$d_3=40$} & \multirow{2}{*}{120} & \multirow{2}{*}{12} & \multirow{2}{*}{16} & \\
       $\Gamma_{34}^{c}(21,k)$ & & & & & & \\
\hline $\Gamma_{38}^{d}(481,k)$ & $d_2=1440$ & $d_3=960$ & 2880 & 36 & 24 &\\
\hline \multicolumn{7}{|c|}{$|G|=24(g-1)/5$} \\
\hline $\Gamma_{29}^{b}(41)$ & $d_3=32$ & $d_4=24$ & 96 & 6 & 6 & \\
\hline $\Gamma_{29}^{b'}(41)$ & $d_4=16$ & $d_8=8$ & 64 & 4 & 4 & \\
\hline \multicolumn{7}{|c|}{$|G|=30(g-1)/7$} \\
\hline $\Gamma_{30}^{b}(1681)$ & $d_3=1200$ & $d_5=720$ & 3600 & 20 & 6 & \\
\hline $\Gamma_{30}^{b'}(1681)$ & $d_4=600$ & $d_{20}=120$ & 2400 & 10 & 4 & \\
\hline
\end{tabular}
\end{center}
\end{adjustwidth}

\begin{adjustwidth}{-4em}{0em}
\begin{center}
\begin{tabular}{|l|ll|r|r|r|r|}
\hline \multicolumn{1}{|c|}{name} & \multicolumn{2}{|c|}{number of vertices} & \multicolumn{1}{|c|}{E} & \multicolumn{1}{|c|}{D} & \multicolumn{1}{|c|}{$\mathcal{G}$} & \multicolumn{1}{|c|}{annotation}\\
\hline \multicolumn{7}{|c|}{$|G|=4(\sqrt{g}+1)^2$ for $g=k^2, k\neq3, 5, 7, 11, 19, 41$} \\
\hline $\Gamma_{22D}^{a}(841)$ & $d_3=600$ & $d_5=360$ & 1800 & 12 & 8 &\\
\hline $\Gamma_{19}^{a}(841)$ & $d_2=900$ & $d_{30}=60$ & 1800 & 4 & 8 &$\widetilde{K}_{30, 30}$\\
\hline $\Gamma_{19}^{a}(k^2)$ & $d_2=(k+1)^2$ & $d_{k+1}=2(k+1)$ & $2(k+1)^2$ & 4 & 8 &$\widetilde{K}_{k+1, k+1}$\\

\hline \multicolumn{7}{|c|}{$|G|=4(g+1)$ for remaining $g$} \\
\hline $\Gamma_{28}^{c}(29)$ & \multirow{2}{*}{$d_2=30$} & \multirow{2}{*}{$d_{30}=2$} & \multirow{2}{*}{60} & \multirow{2}{*}{2} & \multirow{2}{*}{4} & \multirow{2}{*}{$K_{2,30}$}\\
       $\Gamma_{15E}^{a}(29)$ & & & & & &\\
\hline $\Gamma_{28}^{c'}(29)$ & $d_3=20$ & $d_{5}=12$ & 60 & 6 & 4 &\\

\hline $\Gamma_{15E}^{a}(g)$ & $d_2=g+1$ & $d_{g+1}=2$ & $2g+2$ & 2 & 4 & $K_{2,g+1}$\\
\hline
\end{tabular}
\end{center}
\end{adjustwidth}

%Note that the graph $\Gamma_{15E}^{a}(g)$ is isomorphic to the complete bipartite graph $K_{(2,g+1)}$, and the graph $\Gamma_{19}^{a}(k^2)$ can be obtained from the complete bipartite graph $K_{(k+1,k+1)}$ by adding a vertex in each edge.

Even if our computer program can produce any MS graph above, it is hard to understand the picture when the number of its edges is very large.
Following are the pictures of abstract MS graphs with less than $150$ edges and not   of type $K_{2,n}$ (there are many figures of this type which are very easy to understand).

%\begin{center}
%\parbox[t]{110pt}{\scalebox{0.5}{\includegraphics*{Gr2.eps}}  \centerline{$\Gamma_{15E}^a(2)$}}
%\parbox[t]{110pt}{\scalebox{0.5}{\includegraphics*{Gr3a.eps}} \centerline{$\Gamma_{26}^a(3)$}}

%\end{center}

\begin{center}
\parbox[t]{110pt}{\scalebox{0.5}{\includegraphics*{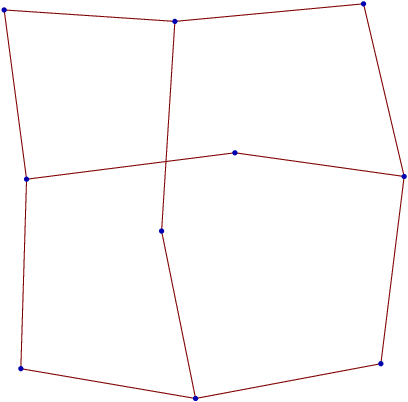}} \centerline{$\Gamma_{26}^{a'}(3)$}}
\parbox[t]{110pt}{\scalebox{0.5}{\includegraphics*{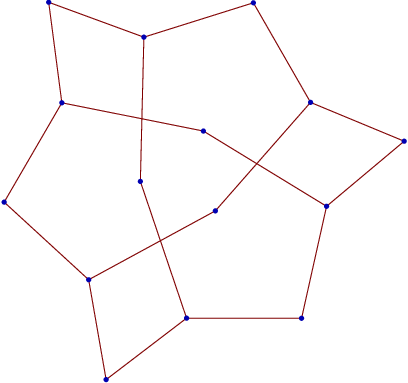}}  \centerline{$\Gamma_{19}^{a}(4)$}}
%\parbox[t]{110pt}{\scalebox{0.5}{\includegraphics*{Gr5a.eps}} \centerline{$\Gamma_{27}^{a}(5)$}}
\parbox[t]{110pt}{\scalebox{0.5}{\includegraphics*{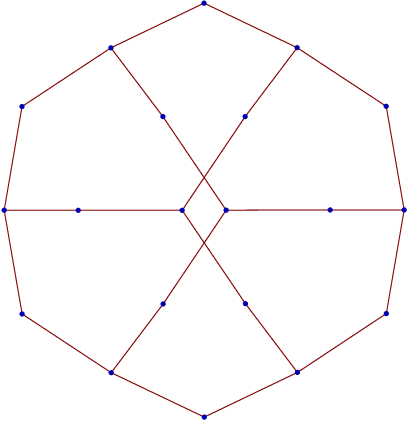}} \centerline{$\Gamma_{27}^{a'}(5)$}}
\end{center}

\begin{center}
\parbox[t]{110pt}{\scalebox{0.5}{\includegraphics*{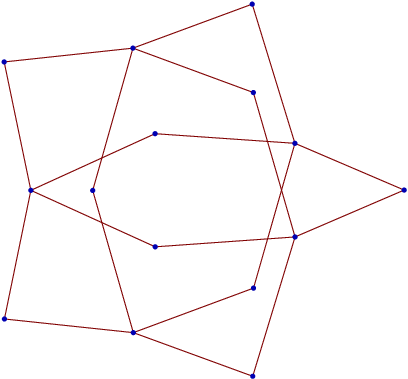}}  \centerline{$\Gamma_{24}^{a}(6)$}}
\parbox[t]{110pt}{\scalebox{0.5}{\includegraphics*{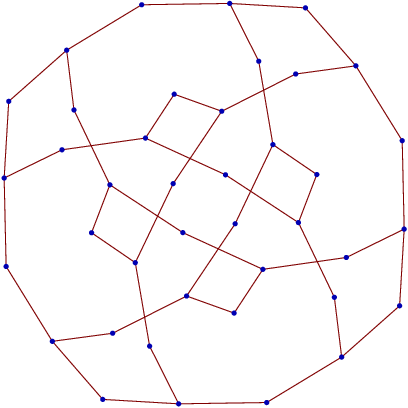}}  \centerline{$\Gamma_{20C}^{a}(9)\cong\Gamma_{20C}^{b}(9,k)$}}
%\parbox[t]{110pt}{\scalebox{0.5}{\includegraphics*{Gr11a.eps}} \centerline{$\Gamma_{28}^{a}(11)$}}
\parbox[t]{110pt}{\scalebox{0.5}{\includegraphics*{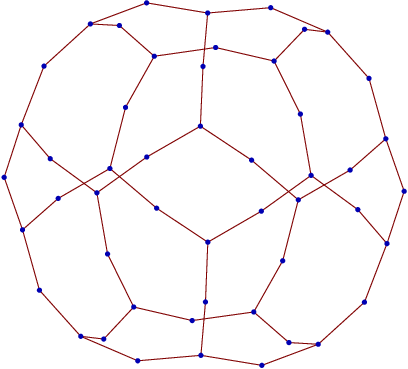}}  \centerline{$\Gamma_{28}^{a'}(11)\cong\Gamma_{34}^{b}(11,k)$}}
\end{center}

\begin{center}
\parbox[t]{110pt}{\scalebox{0.5}{\includegraphics*{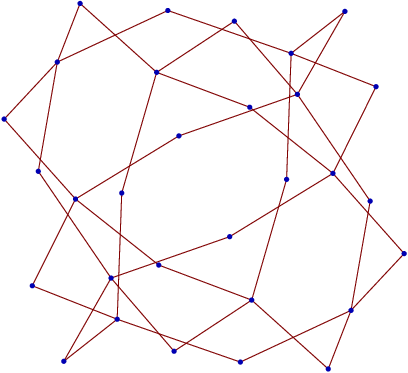}} \centerline{$\Gamma_{34}^{a}(11)$}}
\parbox[t]{110pt}{\scalebox{0.5}{\includegraphics*{Gr11d.eps}} \centerline{$\Gamma_{34}^{a'}(11)$}}
\parbox[t]{110pt}{\scalebox{0.5}{\includegraphics*{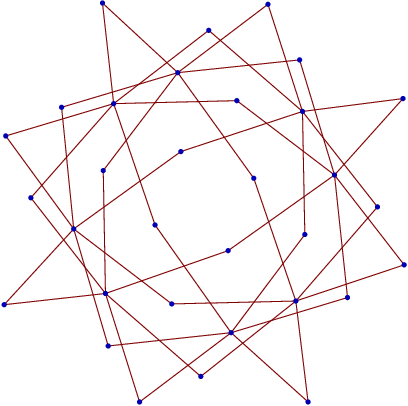}}  \centerline{$\Gamma_{29}^{a}(17)$}}
\end{center}

\begin{center}
\parbox[t]{110pt}{\scalebox{0.5}{\includegraphics*{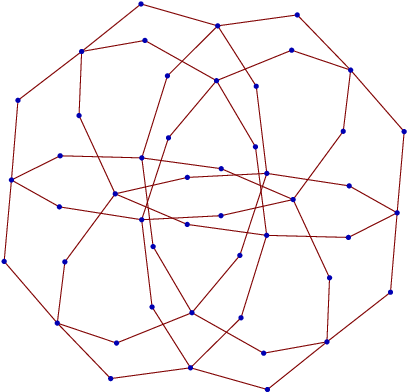}} \centerline{$\Gamma_{29}^{a'}(17)$}}
\parbox[t]{110pt}{\scalebox{0.5}{\includegraphics*{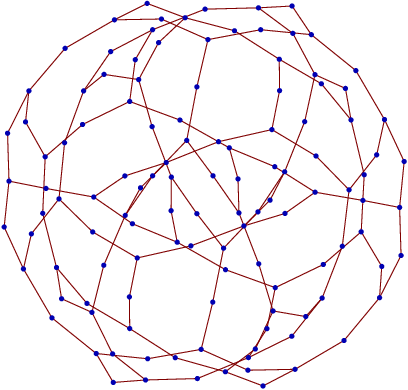}} \centerline{$\Gamma_{21A}^{a}(25)$}}
%\parbox[t]{110pt}{\scalebox{0.5}{\includegraphics*{Gr7a.eps}}  \centerline{$\Gamma_{27}^{b}(7)$}}
\parbox[t]{110pt}{\scalebox{0.5}{\includegraphics*{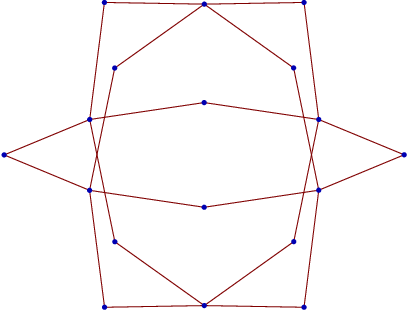}} \centerline{$\Gamma_{27}^{b'}(7)$}}
\end{center}

\begin{center}
\parbox[t]{110pt}{\scalebox{0.5}{\includegraphics*{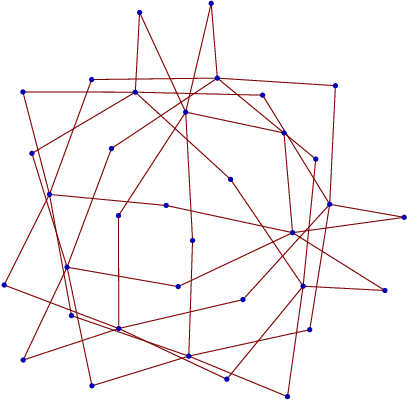}} \centerline{$\Gamma_{19}^{a}(16)$}}
%\parbox[t]{110pt}{\scalebox{0.5}{\includegraphics*{Gr19a.eps}}  \centerline{$\Gamma_{28}^{b}(19)$}}
\parbox[t]{110pt}{\scalebox{0.5}{\includegraphics*{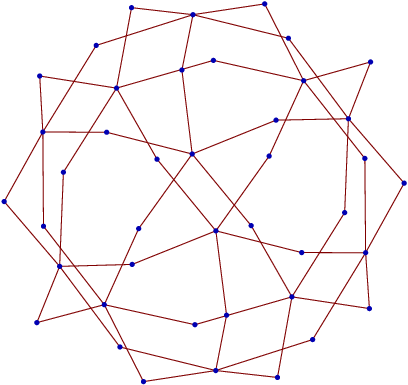}} \centerline{$\Gamma_{28}^{b'}(19)$}}
\parbox[t]{110pt}{\scalebox{0.5}{\includegraphics*{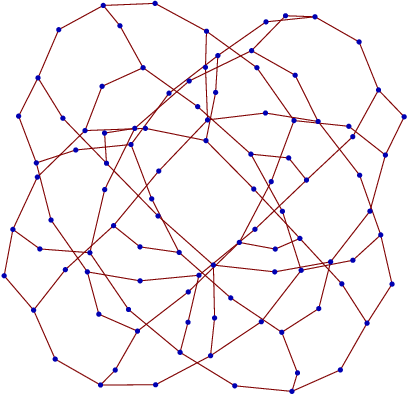}} \centerline{$\Gamma_{28}^{d}(21,k)$}}
\end{center}

\begin{center}
\parbox[t]{110pt}{\scalebox{0.5}{\includegraphics*{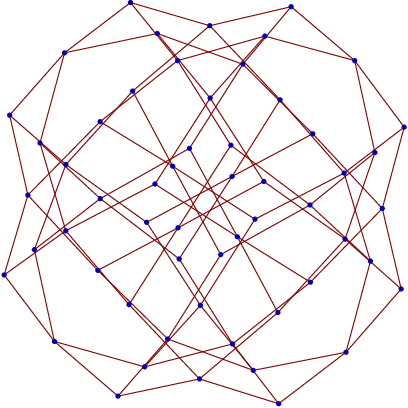}}  \centerline{$\Gamma_{29}^{b}(41)$}}
\parbox[t]{110pt}{\scalebox{0.5}{\includegraphics*{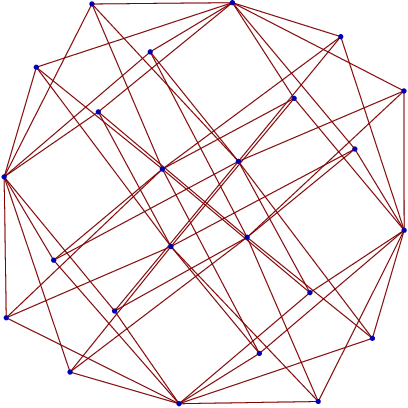}} \centerline{$\Gamma_{29}^{b'}(41)$}}
%\parbox[t]{110pt}{\scalebox{0.5}{\includegraphics*{Gr29a.eps}} \centerline{$\Gamma_{28}^{c}(29)$}}
\parbox[t]{110pt}{\scalebox{0.5}{\includegraphics*{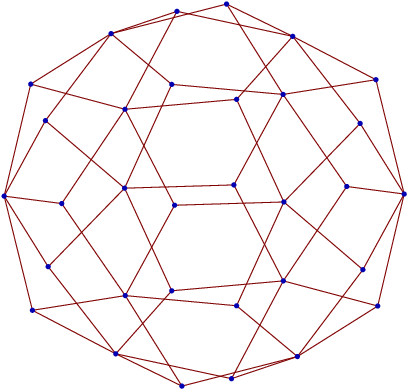}}  \centerline{$\Gamma_{28}^{c'}(29)$}}
\end{center}

\section{Description of spatial MS graphs}\label{appendixB}

With one point removed, the unit
sphere $S^3\subset E^4$ will be mapped to $\mathbb{R}^3$ by
a stereographic projection. Each MS spacial graph in $S^3$ consists of geodesic segments
as edges, which are mapped to circles in $\mathbb{R}^3$.

This appendix contains the pictures in $\mathbb{R}^3$ which are stereographic projections of some spatial MS graphs in $S^3$.
Those pictures present the geometrical shapes of those spatial MS graphs, and possibly also some beauty and complexity provided by geometry and computer programs.

For intuitive descriptions of the corresponding symmetries for many cases, please see also the last sections of \cite{WWZZ1} and \cite{WWZZ}.

%Even our computer program can produce any MS spacial grapha, but it is hard to see the picture when the number of its edges is very large.

Below we present the shape of those graphs: Two infinite series, some spacial graphs related to 3- and 4-dimensional regular polyhedra (2- and 3-dimensional spherical tessellation), finally one  example for the knotted case.

In many cases, when we delete the vertices of degree $2$, the graph will become a classical graph such as complete graph, complete bipartite graph, 1-skeletons of regular polyhedra and so on. The word ``essentially'' will mean that the vertices of degree $2$ are deleted.

The first two figures give the general case, the two infinite series.

$\Gamma_{15E}^{a}(g)$ is  the complete bipartite graph $K_{(2,g+1)}$.
$\Gamma_{19}^{a}(k^2)$ is essentially the complete bipartite graph $K_{(k+1,k+1)}$. The following figures are for $g=8$ and $k=4$.

\begin{center}
\centerline{\scalebox{0.3}{\includegraphics{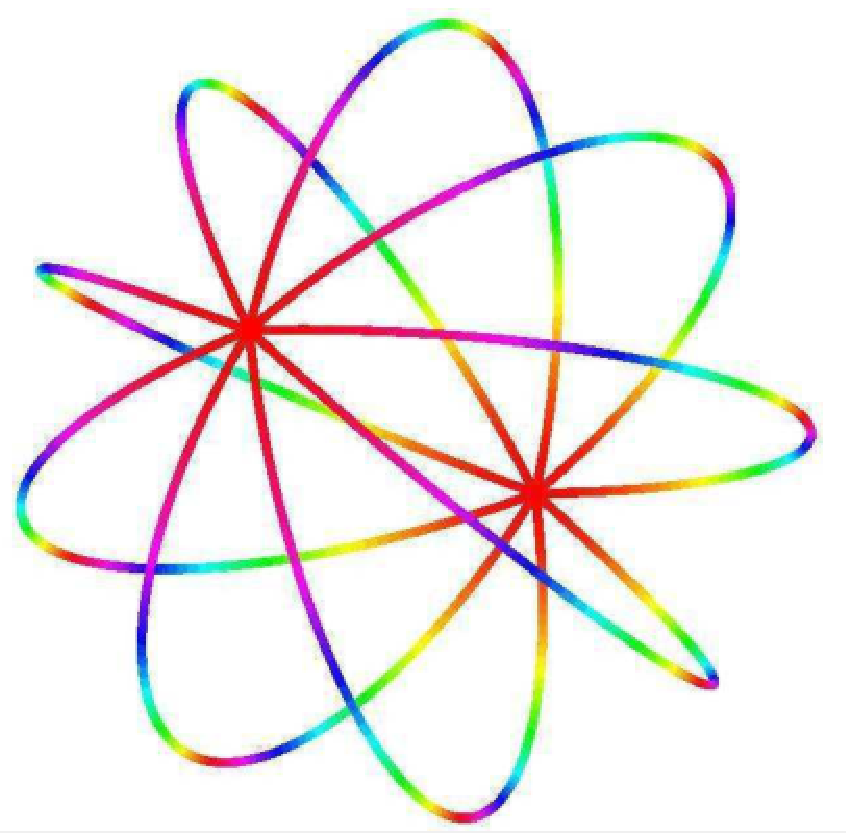}}
\scalebox{0.32}{\includegraphics{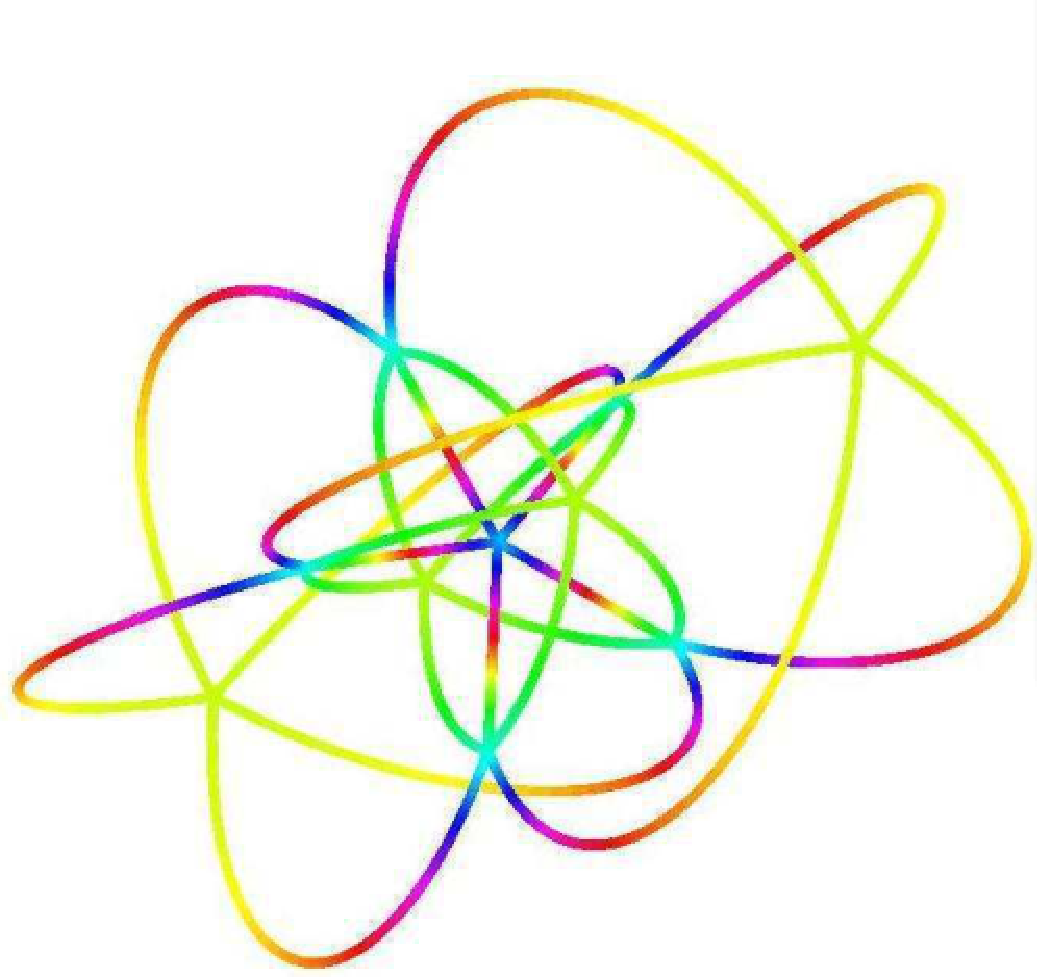}}}
\end{center}

%The generalized Petersen graph $G(n,k)$ is a graph with vertex set
%$$\{u_0, u_1, \cdots, u_{n-1}, v_0, v_1, \cdots, v_{n-1}\}$$
%and edge set
%$$\{u_iu_{i+1}, u_iv_i, v_iv_{i+k}\mid i=0, \cdots, n-1\}$$
%where subscripts are to be read modulo $n$ and $k < n/2$. The following graphs comes from three dimensional regular polyhedra. Some of them are also generalized Petersen graphs.

Next we present figures related to regular 3-dimensional polyhedra.

$\Gamma_{26}^{a'}(3)$,  $\Gamma_{27}^{a'}(5)$, $\Gamma_{28}^{a'}(11)$,  $\Gamma_{27}^{b'}(7)$ and  $\Gamma_{28}^{b'}(19)$ are essentially the 1-skeletons of
the regular tetrahedron,   the regular cube, the  regular dodecahedron,  the regular octahedron and  the  regular icosahedron respectively.
 If we connect the vertices and the face centers of the  regular dodecahedron (or the regular icosahedron) by geodesic segments in the faces, then we will get the graph $\Gamma_{28}^{c'}(29)$. Note that the dual graphs of the above six graphs are all dipole graphs with vertices at the center of the polyhedra and the infinity. The following figures are for $\Gamma_{26}^{a'}(3)$, $\Gamma_{27}^{a'}(5)$ and
 $\Gamma_{28}^{a'}(11)$ respectively.

\begin{center}
\centerline{
\scalebox{0.2}{\includegraphics{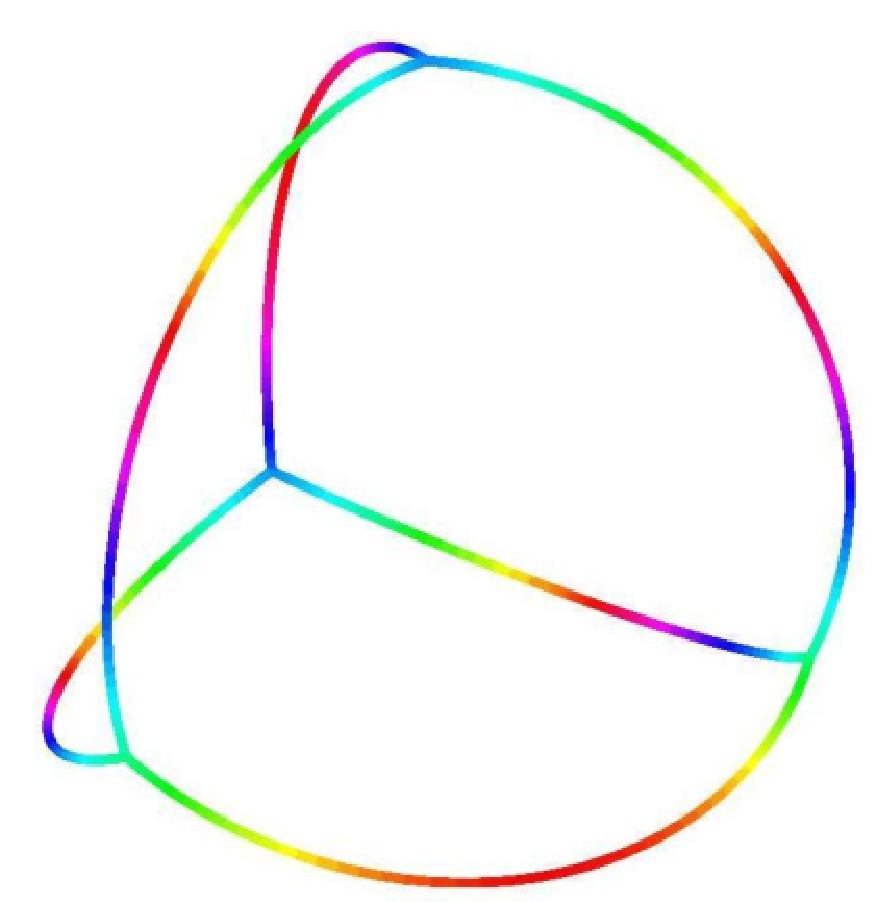}}
\scalebox{0.15}{\includegraphics{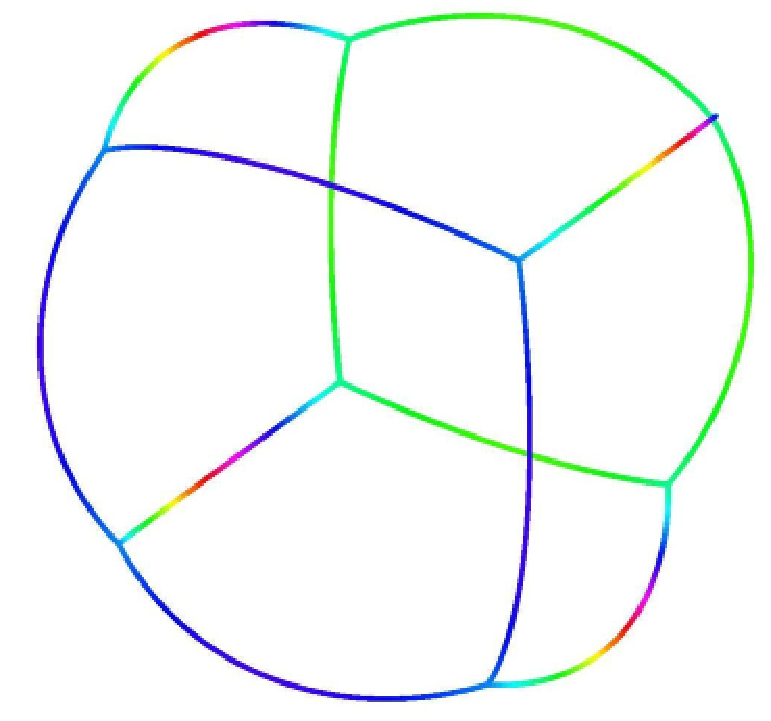}}
\scalebox{0.2}{\includegraphics{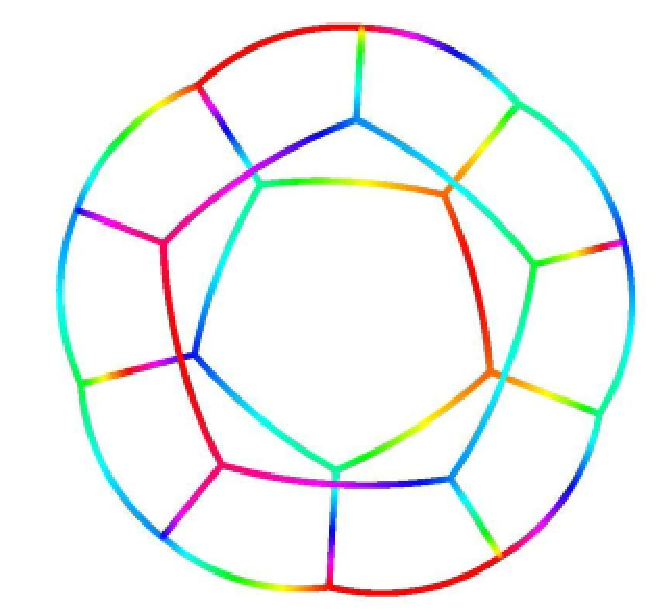}}}
\end{center}

Consider the Hopf fibration $S^3\rightarrow S^2$.
Fixed a point in $S^3$, the above six graphs in $S^2$ can be lifted piecewise, and we can get six graphs in $S^3$,
which are essentially $\Gamma_{20C}^{a}(9)$, $\Gamma_{21A}^{a}(25)$, $\Gamma_{22B}^{a}(121)$, $\Gamma_{21B}^{a}(49)$, $\Gamma_{22C}^{a}(361)$ and $\Gamma_{22D}^{a}(841)$, corresponding to $\Gamma_{26}^{a'}(3)$, $\Gamma_{27}^{a'}(5)$, $\Gamma_{28}^{a'}(11)$, $\Gamma_{27}^{b'}(7)$, $\Gamma_{28}^{b'}(19)$ and $\Gamma_{28}^{c'}(29)$. The following figures  are for $\Gamma_{20C}^{a}(9)$ and $\Gamma_{21A}^{a}(25)$ respectively.

\begin{center}
\centerline{\scalebox{0.9}{\includegraphics{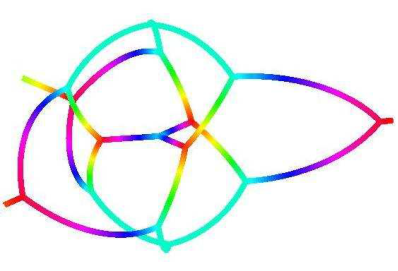}}
\scalebox{0.55}{\includegraphics{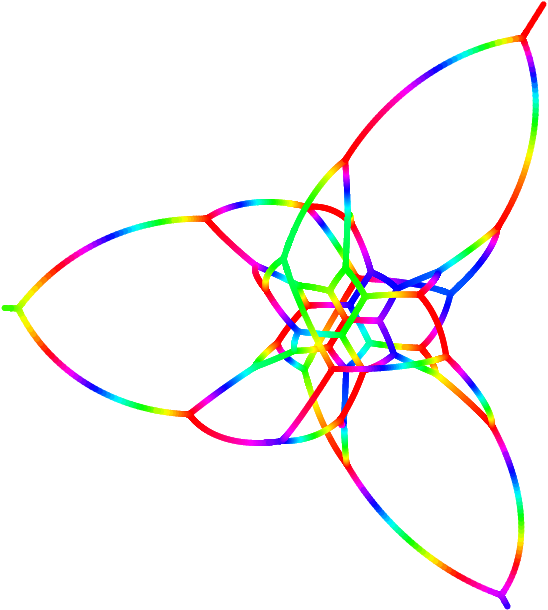}}}
\end{center}

%The spatial MS graphs $\Gamma_{20C}^{a}(9,k)$, $\Gamma_{22B}^{a}(121,k)$ and $\Gamma_{22C}^{a}(361,k)$ can also be obtained by lifting certain symmetric graphs in $S^2$ to $S^3$. We just give a sketch map of $\Gamma_{20C}^{b}(9,k)$.

%\begin{center}
%\centerline{\scalebox{0.8}{\includegraphics{O20Cb.eps}}}
%\end{center}

The following graphs are related to 4-dimensional regular polyhedra.

$\Gamma_{24}^{a}(6)$ is essentially the 1-skeleton of a regular 4-simplex (or a regular 5-cell). %It is also the
%complete graph $K_5$ with $5$ vertices.
If we connect the vertices and the 3-dimensional face centers of the 4-simplex by line segments in the faces, then we will get $\Gamma_{34}^{a}(11)$.
The dual graph $\Gamma_{34}^{a'}(11)$ can be obtained by connecting the 1-dimensional face centers and the 2-dimensional face centers of the 4-simplex. The following figures are for $\Gamma_{24}^{a}(6)$, $\Gamma_{34}^{a}(11)$ and $\Gamma_{34}^{a'}(11)$
respectively.

%It is also the generalized Petersen graph $G(10,3)$, and it has another name called the Desargues graph.

\begin{center}
\centerline{ \scalebox{0.33}{\includegraphics{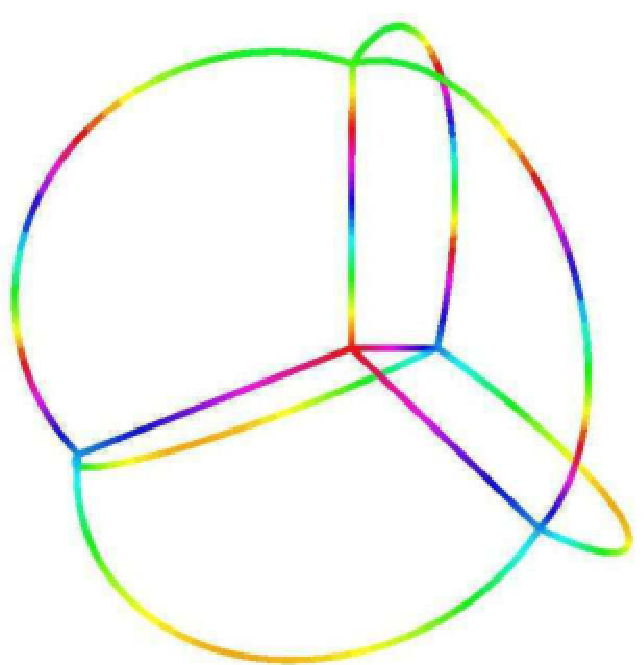}}
\scalebox{0.33}{\includegraphics{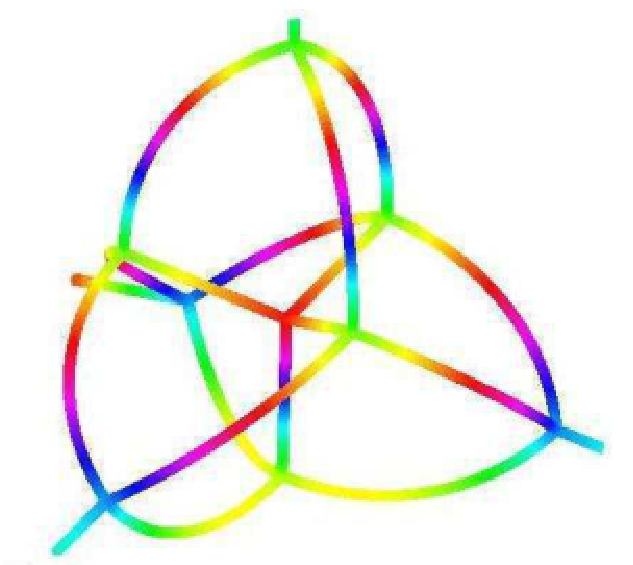}}
\scalebox{0.31}{\includegraphics{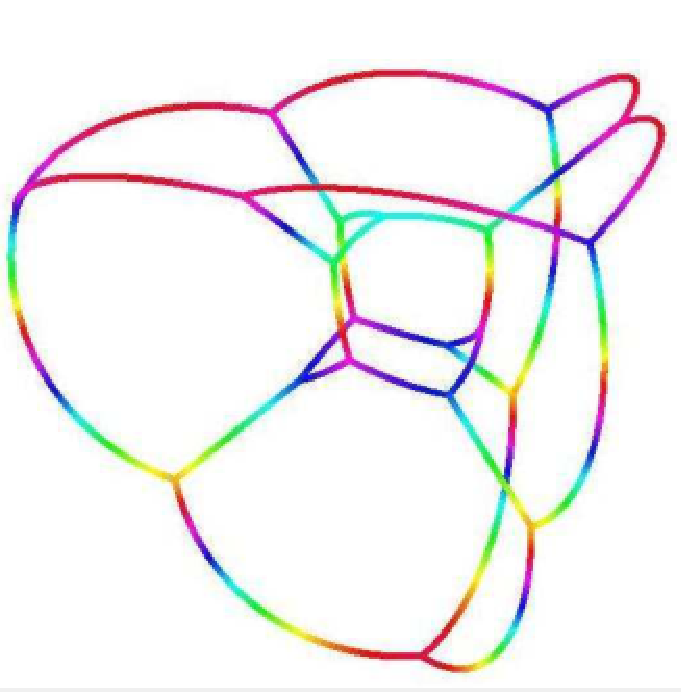}}}
\end{center}

$\Gamma_{29}^{a'}(17)$ is essentially the 1-skeleton of a regular 4-cube (or a regular 8-cell). Its dual graph $\Gamma_{29}^{a}(17)$ is essentially the 1-skeleton of a regular 16-cell which is the dual polyhedron of the 4-cube. The following figures are for $\Gamma_{29}^{a'}(17)$ and $\Gamma_{29}^{a}(17)$ respectively.

\begin{center}
\centerline{ \scalebox{0.3}{\includegraphics{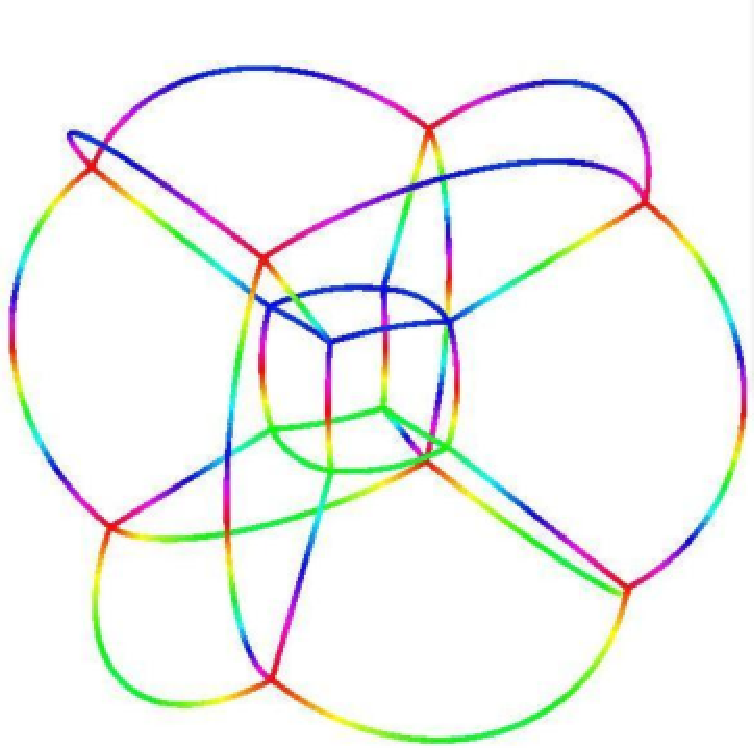}}
\scalebox{0.8}{\includegraphics{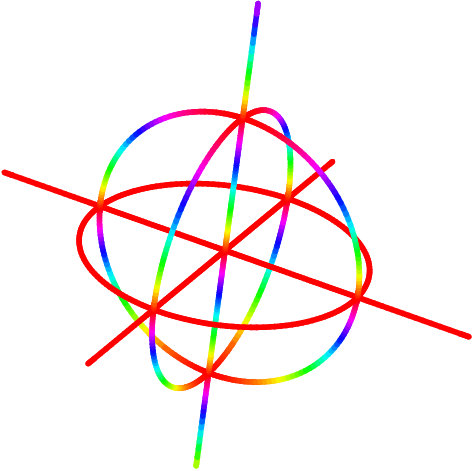}}}
\end{center}

For other regular polyhedra, there are similar constructions.
$\Gamma_{30}^{a'}(601)$ is essentially the 1-skeleton of a regular 120-cell. Its dual graph $\Gamma_{30}^{a}(601)$ is essentially the 1-skeleton of a regular 600-cell which is the dual polyhedron of the 120-cell.
$\Gamma_{25}^{a}(73)$ is essentially the 1-skeleton of a regular 24-cell, which is a self-dual polyhedron. If we connect the vertices and the 3-dimensional face centers of the 4-cube (resp. 120-cell), then we will get the graph $\Gamma_{29}^{b'}(41)$ (resp. $\Gamma_{30}^{b'}(1681)$). If we connect the 1-dimensional face centers and the 2-dimensional face centers of the 4-cube (resp. 120-cell), then we will get the graph $\Gamma_{29}^{b}(41)$ (resp. $\Gamma_{30}^{b}(1681)$).

The remaining spatial MS graph of genus $11$ is $\Gamma_{34}^{b}(11,k)$. It comes from a certain edge in the fundamental domain of the corresponding group action, see Example 7.4 of \cite{WWZZ}. The following figure is for $\Gamma_{34}^{b}(11,k)$.

\begin{center}
\centerline{ \scalebox{0.22}{\includegraphics{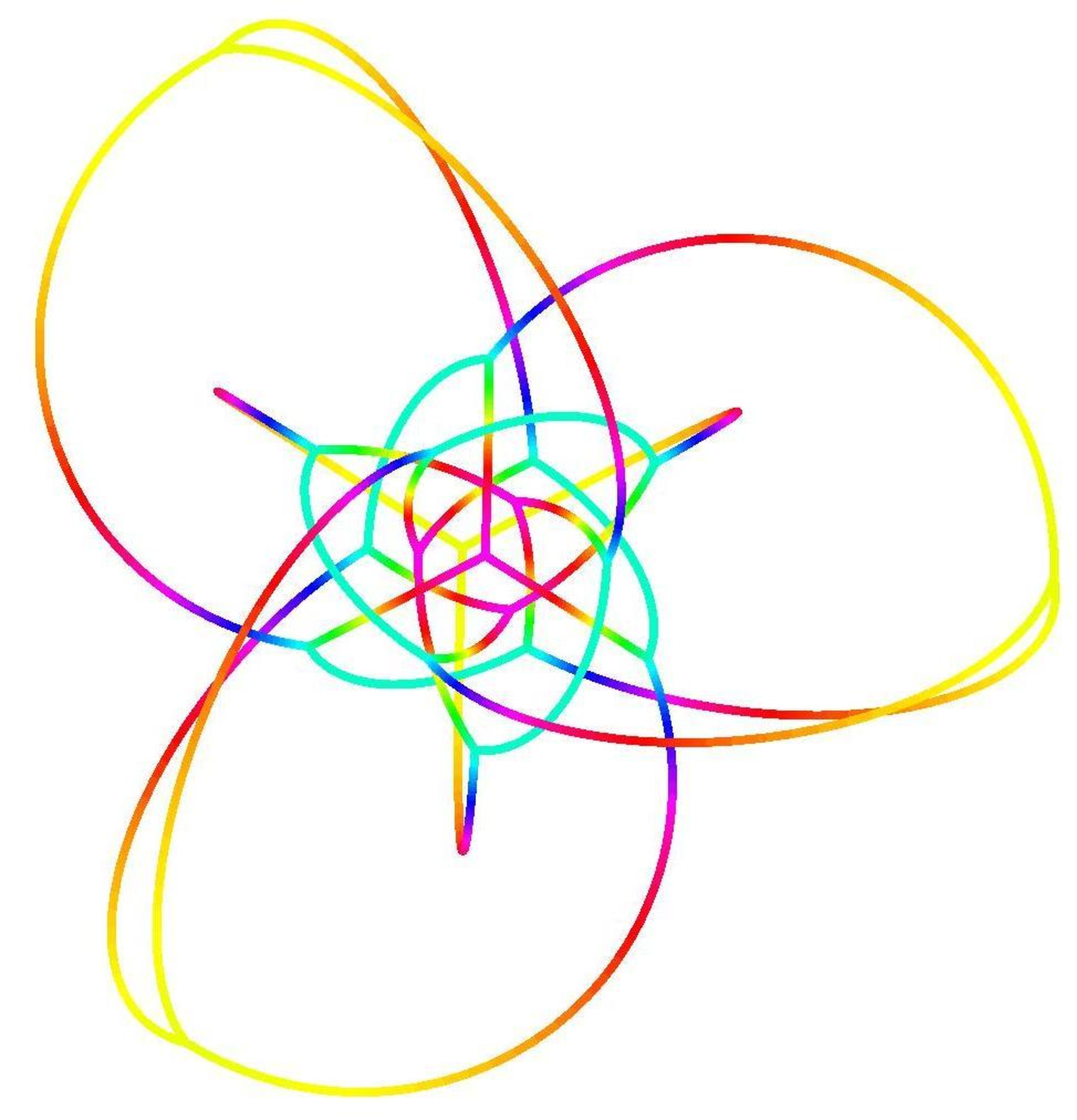}}}
\end{center}

%Finally we give a sketch map of $\Gamma_{30}^{a'}(601)$ as following. Other spatial MS graphs can also be pictured. However, it is not quite easy to see the structure from the picture, as the case of $\Gamma_{30}^{a'}(601)$. For $\Gamma_{28}^{d}(21,k)$, the action group is $I\times \mathbb{Z}_2$, and the fundamental domain of the action is a tetrahedron as in the picture of the orbifold $\mathcal{O}_{28}$. Then one can choose a suitable arc in the fundamental region and apply the group action to it to get the whole graph. The graphs with genus $97$ (resp. $241$ and $481$) are related to the tessellation of $S^3$ whose cell is a truncated cube (resp. a twice truncated tetrahedron). The cell is also the pre-fundamental domain in \cite{Du2}. The fundamental domain of the action and the pasting way of its faces are also described in \cite{Du2}. The spatial graphs come from certain edges of the fundamental domain (arcs in the fundamental domain when $g=481$), see \cite{WWZZ1} or \cite{Wa} for an example.

%\begin{center}
%\centerline{ \scalebox{0.5}{\includegraphics{O30aa.eps}}}
%\end{center}

\begin{remark} Contrary to the case of abstract MS graph, there is no uniform way to picture all the spatial MS graphs. In each case, one has to analyse the isometries of the groups so geometry is involved. To get a picture, we usually first picture a part of the graph, then apply the group action
to it to get the whole graph. The quaternion representation of the group is the key for many cases. For more details about finite subgroups of $SO(4)$ one can see \cite{CS}, \cite{Du1} and \cite{Du2}. The code for \cite{Mathematica} of the graphs can be found in Appendix C of \cite[pages 112-138]{Wa}. Even as a Ph.D thesis in Peking University, \cite{Wa} is written in Chinese, but the code part  could still be understood.
\end{remark}

\end{appendix}

\noindent Chao Wang, School of Mathematical Sciences,
University of Science and Technology of China, Hefei, 230026 CHINA\\
{\it E-mail address:} chao\_{}wang\_{}1987@126.com

\noindent Shicheng Wang, School of Mathematical  Sciences,
Beijing 100871 CHINA\\
{\it E-mail address:} wangsc@math.pku.edu.cn

\noindent Yimu Zhang, Mathematics School, Jilin University,
Changchun, 130012 CHINA\\
{\it E-mail address:} zym534685421@126.com

\noindent Bruno Zimmermann, Universita degli Studi di Trieste,
Trieste, 34100 Trieste ITALY\\
{\it E-mail address:} zimmer@units.it
\end{document}